\documentclass[10pt,reqno]{amsart}
\usepackage{amsmath}
\usepackage{amsfonts}
\usepackage{mathrsfs}
\usepackage{amssymb}
\usepackage{xypic}
\usepackage{amsthm}
\usepackage{url}
\usepackage{setspace}     \spacing{1}
\usepackage{marginnote}
\usepackage{enumitem}
 \usepackage[usenames,dvipsnames]{pstricks}
 \usepackage{epsfig}
 \usepackage{pst-grad} 
 \usepackage{pst-plot} 
 
\usepackage{xargs}
\usepackage[colorlinks=true]{hyperref} \hypersetup{urlcolor=blue, citecolor=red}
\usepackage{times}

\theoremstyle{plain}
\newtheorem{theorem}{Theorem}[section]
\newtheorem{lemma}[theorem]{Lemma}
\newtheorem{proposition}[theorem]{Proposition}
\newtheorem{prop}[theorem]{Proposition}
\newtheorem{corollary}[theorem]{Corollary}

\theoremstyle{definition}
\newtheorem{definition}[theorem]{Definition}
\newtheorem{remark}[theorem]{Remark}

\hyphenation{infra-nil-auto-mor-phism dyn-amics diffeo-morphisms diffeo-morph-isms co-dimen-sion}

\usepackage{mathabx}

\newcommand{\diff}{\mathrm{Diff}}

\newcommand{\Sl}{\mathrm{SL}}
\newcommand{\SL}{\mathrm{SL}}

\newcommand{\id}{\mathrm{Id}}

\title[Measure rigidity for random dynamics on surfaces]{Measure rigidity for  random dynamics on surfaces with positive entropy} 
\author[A.~W.~Brown]{Aaron W. Brown}
\address{Penn State University}
\email{brown@math.psu.eu}

\author[F.~Rodriguez~Hertz]{Federico Rodriguez Hertz}
   \address{Penn State University}
\email{hertz@math.psu.edu}


\long\def\symbolfootnote[#1]#2{\begingroup\def\thefootnote{\fnsymbol{footnote}}
\footnote[#1]{#2}\endgroup}

\def\I{\mathcal I}
\def\Exp{\mathbb E}
\def\Ex{\Exp}
\def\ae{a.e.\ }
\def\as{a.s.\ }

\def\U{\mathcal U}

\newcounter{step}
\newcommand{\step}{\addtocounter{step}{1}\arabic{step}}
\def\lip{\mathrm{Lip}}

\begin{document}
\newcommand\Sigmaloc{\Sigma^{\pm}_{\text {loc}}}
\newcommand\Sigmalocu{\Sigma^{+}_{\text {loc}}}
\newcommand\Sigmalocs{\Sigma^{-}_{\text {loc}}}
\newcommand{\Fols}{\mathcal{W}^s}
\newcommand{\Folu}{\mathcal{W}^u}
\renewcommand{\L}{\mathcal{L}}
\newcommand{\Fol}{\mathcal{F}}

\newcommand{\aeq}{\circeq}

\newcommand{\cmt}[1]{{\color{red}{{#1}}}}

 \newcommand{\oldepsilon}{\mathchar"10F}
\renewcommand{\epsilon}{\varepsilon}
\renewcommand{\emptyset}{\varnothing}
\newcommand{\restrict}[2]{{#1}{\restriction_{{ #2}}}}
\newcommand{\restrictThm}[2]{{#1}\! \!  \restriction_{\! #2}}

\newcommand{\sm}{\smallsetminus}
\newcommand{\R}{\mathbb {R}}
\newcommand{\T}{\mathbb {T}}

\newcommand{\Q}{\mathcal {Q}}
\newcommand{\Z}{\mathbb {Z}}
\newcommand{\N}{\mathbb {N}}
\newcommand{\A}{\mathfrak {A}}
\newcommand{\B}{\mathcal  {B}}
\renewcommand{\P}{\mathcal{P}}

\newcommand{\inv}{^{-1}}
\newcommand{\C}{\mathcal C}

\newcommand{\note}[1]{\marginnote{{\color{red}\footnotesize \begin{spacing}{1}#1\end{spacing}}}}

\def\hol{\mathcal H}

\def\us{{u/s}}
\def\td {\tilde}

\newlength{\wideitemsep}
\setlength{\wideitemsep}{.5\itemsep}
\addtolength{\wideitemsep}{7pt}
\let\olditem\item
\renewcommand{\item}{\setlength{\itemsep}{\wideitemsep}\olditem}

\def\A{\mathcal A}
\def\F{\mathcal F}
\def\G{\mathcal G}	
	  
\def\E{\mathbb E}
	  
\def\MP{\mathcal{X}}
\renewcommand{\underbar}{\underline}
\renewcommand{\bar}{\overline}
\def\good{\mathscr G} 

\newcommand{\stab}[2]{W^s\!\left (#2, {#1}\right)}
\newcommand{\unst}[2]{W^u\!\left(#2,{#1}\right)}
\newcommand\locstab[3][r]{W^s_{ #1}\!\left( {#3},#2 \right)}
\newcommand\locunst[3][r]{W^u_{#1}\!\left({#3}, #2 \right)}

\newcommand{\stabM}[2]{W^s_#2\!\left({#1}\right)}
\newcommand{\unstM}[2]{W^u_#2\!\left({#1}\right)}
\newcommand\locstabM[3][r]{W^s_{#3,#1}\!\left(#2\right)}
\newcommand\locunstM[3][r]{W^u_{#3, #1}\!\left(#2\right)}

\newcommand{\stabp}[1]{W^s\!\left({#1}\right)}
\newcommand{\unstp}[1]{W^u\!\left({#1}\right)}
\newcommand\locstabp[2][r]{W^s_{#1}\!\left( #2\right)}
\newcommand\locunstp[2][r]{W^u_{#1}\!\left( #2\right)}

\newcommand{\Eu}[2]{E^u\left({#2, #1}\right)}
\newcommand{\Es}[2]{E^s\left({#2, #1}\right)}
\newcommand{\EuM}[2]{E^u_{#2}\left({ #1}\right)}
\newcommand{\EsM}[2]{E^u_{#2}\left({ #1}\right)}
\newcommand{\Eup}[1]{E^u\left({ #1}\right)}
\newcommand{\Esp}[1]{E^u\left({ #1}\right)}
\renewcommand{\E}{\mathcal E}

\def\W{\mathcal W}
\def\scrF{\mathscr{F}}

\newcommand{\cocycle}[1][\xi]{%
  \def\ArgI{{#1}}%
  \BlahRelay
}
			\newcommand\BlahRelay[1][n]{%
			 %
			  f_{\ArgI} ^{#1}
			}

\maketitle
\begin{abstract}
Given a surface $M$ and a Borel probability measure $\nu$ on the group of $C^2$-diffeomorphisms of $M$, we study $\nu$-stationary probability measures on $M$.  Assuming the positivity of a certain entropy, the following dichotomy is proved: either the stable distributions for the random dynamics is non-random, or the measure is SRB.  
In the case that $\nu$-\ae diffeomorphism preserves a common smooth measure $m$, we show that for any positive-entropy stationary  measure $\mu$ either there exists a $\nu$-almost-surely invariant $\mu$-measurable line field (corresponding do the stable distributions for \ae random composition) or the measure $\mu$ is $\nu$-almost-surely  invariant and coincides with an ergodic component of $m$. 

To prove the above result, we introduce a  skew product with surface fibers over a measure preserving transformation equipped with an increasing sub-$\sigma$-algebra $\hat \Fol$.  Given an invariant measure $\mu$ for the skew product, and assuming the $\hat \Fol$-measurability of the `past dynamics' and the fiber-wise conditional measures, we prove a dichotomy: either the fiber-wise stable distributions are measurable with respect to a related increasing sub-$\sigma$-algebra, or the measure $\mu$ is fiber-wise SRB. 

\end{abstract}

\section{Introduction}

Given an action of a one-parameter group on a manifold with some degree of hyperbolicity, there are typically many ergodic,
invariant measures with positive entropy. 
For instance, given an Anosov or Axiom A diffeomorphism of a compact manifold, the equilibrium states for H\"older-continuous potentials provide measures with the above properties \cite{MR0380889,MR0442989}.  
When passing to hyperbolic actions of larger groups, the following phenomenon has been demonstrated in many settings: the only invariant ergodic measures with positive entropy are absolutely continuous (with respect to the ambient Riemannian volume).  
For instance, consider the action of the semi-group $\N^2$ on the additive circle generated by
$$x\mapsto 2x \bmod 1\quad \quad x \mapsto 3x \bmod 1.$$
Rudolph showed for this action that the only invariant, ergodic probability measures are Lebesgue or have zero-entropy for every one-parameter subgroup \cite{MR1062766}.  
In \cite{MR1406432}, Katok and Spatzier generalized the above phenomenon to  actions of commuting toral automorphisms.  

Outside of the setting of affine actions,  Kalinin,  Katok, and Rodriguez Hertz, have recently demonstrated a version of abelian measure rigidity for nonuniformly hyperbolic, maximal-rank actions. 
In  \cite{MR2811602}, the authors consider $\Z^n$ acting by $C^{1+\alpha}$ diffeomorphisms on a $(n+1)$-dimensional manifold and prove that any  $\Z^n$-invariant measure $\mu$ is absolutely continuous assuming that 
at least one element of $\Z^n$ has positive entropy with respect to $\mu$ and the Lyapunov exponent functionals are in \emph{general position}.  

For affine actions of non-abelian groups, a number of results have recently been obtained by  Benoist and Quint  in a series of papers  \cite{MR2831114,MR3037785,BQIII}.  For instance, consider a finitely supported measure $\nu$ on the group $\SL(n,\Z)$.  Let $\Gamma_\nu\subset \Sl(n,\Z)$ be the { (semi-)}group generated by the support of $\mu$.  We note that $\Gamma_\nu$ acts naturally on the torus $\T^n$.  In \cite{MR2831114}, it is proved that if every finite index subgroup of { (the group generated by)} $\Gamma_\nu$ acts irreducibly on $\R^n$ then every $\nu$-stationary probability measure on $\T^n$ is either finitely supported or is Haar; in particular every $\nu$-stationary probability measure is $\SL(n,\Z)$-invariant.  Similar results are obtained  in \cite{MR2831114} for groups of translations on quotients of simple Lie groups.

In this article, we prove a measure rigidity result  for stationary measures for groups acting by diffeomorphisms on surfaces.  We focus here only on actions on surfaces  and stationary probability measures with positive entropy though we expect the results to hold in more generality.   We rely heavily on the tools from the theory of nonuniformly hyperbolic diffeomorphisms used in \cite{MR2811602} as well as a modified version of the ``exponential drift'' arguments developed in \cite{MR2831114} and \cite{1302.3320}.

\def\nunaught{{\hat \nu}}
\def\munaught{{\hat \mu}}
\section{Preliminary definitions and constructions}
Let $M$ be a closed (compact, boundaryless) $C^\infty$ Riemannian manifold.  We write  $\diff^r(M)$ for the group of $C^r$-diffeomorphisms from $M$ to itself equipped with its natural $C^r$-topology.  
Fix $r= 2$ and consider a subgroup $\Gamma\subset \diff^2(M)$. 
We say a Borel probability measure $\mu$ on $M$ is \emph{$\Gamma$-invariant} if \begin{equation}\label{eq:inv} \mu(f\inv (A)) = \mu(A)\end{equation} for all Borel $A\subset M$ and all $f\in \Gamma$.

We note that for any continuous action by an amenable group on a compact metric space, there always exists at least one invariant measure.  However, for  actions by non-amenable groups,  invariant measures need not exists.  
For this reason, we introduce   a weaker notion of invariance.  
Let $\nu$ be a Borel probability measure \emph{on the group} $\Gamma$.  
We say Borel probability measure $\mu $ on $M$ is \emph{$\nu$-stationary} if 
	$$\int \mu (f\inv (A)) \ d \nu(f)= \mu (A)$$
for any Borel $A\subset M$.  	
By the compactness of $M$, it follows that  for any probability $\nu$ on $\Gamma$ there exists a $\nu$-stationary probability $\mu$ (e.g.\ \cite[Lemma I.2.2]{MR884892}.)

We note that if $\mu $ is $\Gamma$-invariant then $\mu $ is trivially $\nu$-stationary for any measure $\nu$ on  $\Gamma$.   
Given a $\nu$-stationary measure $\mu$ such that equality \eqref{eq:inv} holds for $\nu$-\ae $f\in \Gamma$, we say that $\mu$ is \emph{$\nu$-\as $\Gamma$-invariant.  }

Given a probability  $\nu$  on  $\diff^2(M)$ one defines the \emph{random walk} on group of diffeomorphisms.  A  path in the random walk induces a sequence of diffeomorphisms from $M$ to itself.  
As in the case of a single transformation, we study the asymptotic ergodic properties of typical sequences of diffeomorphisms acting on $M$.  
We write $\Sigma_+ =   \left(\diff^2(M)\right)^\N$ for the space of sequences of diffeomorphisms
	$\omega= (f_0, f_1, f_2, \dots ) \in \Sigma_+.$ 
Given a Borel probability measure $\nu $ on $\diff^2(M)$, we equip $\Sigma_+$ with the product measure $\nu ^\N$. 
We observe that $\diff^2(M)$ is a Polish space, hence $\Sigma_+$ is Polish and the probability $\nu ^\N$ is Radon.
Let $\sigma\colon \Sigma_+\to \Sigma_+$ be the shift map 
$$\sigma\colon (f_0, f_1,f_2, \dots )\mapsto (f_1,f_2, \dots).$$
We have that $\nu ^\N$ is $\sigma$-invariant. 
Given a sequence $\omega= (f_0, f_1, f_2, \dots ) \in\Sigma_+$ and $n\ge 0$ we define a cocycle
	 $$\cocycle [\omega] [0]:= \id , \quad \cocycle  [\omega][\ ] = \cocycle  [\omega][1] := f_0,\quad \cocycle [\omega] [n] := f_{n-1} \circ f_{n-2} \circ \dots \circ f_1 \circ f_0.$$
We interpret $(\Sigma_+, \nu^\Z)$ as a parametrization of all paths in the random walk defined by $\nu$.  
Following existing literature (\cite{MR968818}, \cite{MR1369243}), we denote by $\MP^+(M, \nu)$ the random dynamical system on $M$ defined by the random compositions $\{f^n_\omega\}_{\omega\in \Sigma_+}$.

Given a measure $\nu$ on $\diff^2(M)$ and a $\nu$-stationary measure $\mu$, we say a subset $A\subset M$ is \emph{$\MP^+(M, \nu)$-invariant} if for $\nu$-\ae $f$ and $\mu$-\ae $x\in M$
\begin{enumerate}
\item $x\in A \implies f(x)\in A$ and \item $x\in M\sm A \implies f(x) \in M\sm A.$\end{enumerate}
We say a $\nu$-stationary probability measure $\mu $ is \emph{ergodic} if, for every $\MP^+(M, \nu)$-invariant set $A$,
we have either $\mu (A) =0$ or $\mu (M\sm A) = 0$.  
We note that for a fixed $\nu$-stationary measure $\mu$, we have an ergodic decomposition of $\mu$ into ergodic, $\nu$-stationary measures  \cite[Proposition I.2.1]{MR884892}.

For a fixed $\nu$ and a fixed $\nu$-stationary probability $\mu $, we define the $\mu$-metric entropy of the random process $\MP^+(M, \nu )$ as follows.  Given a finite partition $\xi$ of $M$ into $\mu $-measurable sets, define $$H_\mu (\xi) = -\sum _{C_i\in \xi} \mu (C_i) \log(\mu (C_i))$$ (with the standard convention that $0\log 0 = 0$.)  
 We define the $\mu$-entropy of $\MP^+(M,\nu )$ with respect to the partition $\xi$ by 
$$h_\mu (\MP^+(M, \nu), \xi):= \lim _{n\to \infty}\dfrac 1 n \int H_\mu  \left (\bigvee_{k=0}^{n-1}(f^k_\omega)\inv \xi   \right) \ d \nu ^\N(\omega)$$ and the entropy of $\MP^+(M, \nu)$ by 
\begin{equation}\label{eq:ent}
h_\mu (\MP^+(M, \nu ):= \sup h_\mu (\MP^+(M, \nu), \xi)
\end{equation}
where the supremum is taken over all finite measurable partitions $\xi$.

	\def\ICeq{
			 &\int \log ^+(|f|_{C^2}) + \log ^+(|f\inv|_{C^2}) \ d \nu  <\infty. \tag{$\ast$}\label{eq:IC2a} 
		} 
In the case that $\nu$ is not compactly supported in $\diff^2(M)$, we will assume the integrability condition
\begin{align}\ICeq 
\end{align}
where $\log^+(a) = \max\{\log (x), 0\}$ and $|\cdot|_{C^2}$ denotes the $C^2$-norm.  
The integrability condition \eqref{eq:IC2a} implies the weaker  condition
\begin{equation}\int \log ^+(|f|_{C^1}) + \log ^+(|f\inv|_{C^1}) \ d \nu  <\infty \label{eq:IC1a}\end{equation} 
which guarantees  Oseledec's Multiplicative Ergodic theorem holds. 
The  $\log$-integrability of the $C^2$-norms is needed later to apply tools from Pesin theory. 
\begin{proposition}[Random Oseledec's multiplicative theorem.]\label{prop:OMT}
Let $\nu$ be measure on $\diff^2(M)$ satisfying \eqref{eq:IC1a}.  Let $\mu $ be an ergodic, $\nu $-stationary probability.  

Then there are real numbers $-\infty<\lambda_1<\lambda_2<\dots<\lambda_\ell<\infty$, called \emph{Lyapunov exponents} such that for $\nu ^\N$-\ae sequence $\omega\in \Sigma_+$ and $\mu $-a.e. $x\in M$ there  is a filtration
\begin{equation}\label{eq:filtration}
\{0\} \subsetneq V^1_\omega(x)\subset \dots \subsetneq V^\ell_\omega(x)= TM
\end{equation}
such that for $v\in V^k_\omega(x)\sm V^{k-1}_\omega(x)$
\[\lim _{n\to \infty} \dfrac 1 n \|D_x \cocycle  [\omega][n] (v)\| = \lambda _k.\]
Furthermore, the subspaces $V^i_\omega(x)$ are invariant in the sense that $$D_xf_\omega V^k_\omega(x) = V^k_{\sigma(\omega)}(f_\omega(x)).$$
\end{proposition}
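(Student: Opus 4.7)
The plan is to reduce Proposition \ref{prop:OMT} to the classical (deterministic) Oseledec multiplicative ergodic theorem by lifting the random dynamics to a skew product over the one-sided shift. Define $F \colon \Sigma_+ \times M \to \Sigma_+ \times M$ by $F(\omega,x) = (\sigma \omega, f_\omega(x))$, where $f_\omega = f_0$ is the first coordinate of $\omega = (f_0, f_1, \dots)$. The $\nu$-stationarity of $\mu$ is equivalent to $F$-invariance of $\nu^\N \times \mu$, and the ergodicity of $\mu$ as a $\nu$-stationary measure is equivalent to the ergodicity of $(F, \nu^\N \times \mu)$; both correspondences are standard (cf.\ \cite{MR884892}) via the identification of the $F$-invariant $\sigma$-algebra with the $\sigma$-algebra of $\MP^+(M,\nu)$-invariant Borel subsets of $M$.

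Next, consider the linear cocycle $A(\omega,x) := D_x f_\omega \colon T_x M \to T_{f_\omega(x)}M$ over $F$. Since $A$ depends only on the base point $x$ and on $f_0$, the integrability hypothesis \eqref{eq:IC1a} gives
\[
\int \log^+ \|A(\omega,x)\| + \log^+\|A(\omega,x)\inv\| \ d(\nu^\N\times \mu)(\omega,x) \le \int \log^+ |f|_{C^1}+\log^+|f\inv|_{C^1} \ d\nu(f) <\infty,
\]
so the cocycle satisfies the hypotheses of the one-sided Oseledec theorem. Applying that theorem yields real numbers $\lambda_1 < \cdots < \lambda_\ell$ (constant by ergodicity) and a measurable filtration $\{0\} \subsetneq V^1_\omega(x) \subsetneq \cdots \subsetneq V^\ell_\omega(x) = T_xM$ such that every $v \in V^k_\omega(x) \sm V^{k-1}_\omega(x)$ has Lyapunov exponent exactly $\lambda_k$ under iteration by $A$. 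The cocycle identity $A(F^{n-1}(\omega,x))\circ \cdots \circ A(\omega,x) = D_x \cocycle[\omega][n]$ gives the growth formula in the statement. Finally, the measurability of the filtration in $(\omega,x)$ jointly together with the $F$-equivariance $A(\omega,x)V^k_\omega(x) = V^k_{F(\omega,x)}$ translates directly into the claimed invariance $D_xf_\omega V^k_\omega(x) = V^k_{\sigma(\omega)}(f_\omega(x))$.

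The only real subtlety is the use of the one-sided (non-invertible) version of Oseledec's theorem, which produces a filtration rather than a splitting; this is precisely what is recorded in \eqref{eq:filtration}. A two-sided splitting (used later in the paper) would require passing to the natural extension $\Sigma = (\diff^2(M))^\Z$ and using the invertibility of the fiber dynamics, but this is not needed for the present statement.
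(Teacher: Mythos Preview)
Your proposal is correct and is the standard reduction to the deterministic Oseledec theorem via the one-sided skew product. The paper does not give its own proof of this proposition; it simply refers to \cite[Proposition I.3.1]{MR1369243}, whose argument is essentially the one you outline.
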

For a proof of the above theorem see, for example, \cite[Proposition I.3.1]{MR1369243}. 
We  write $$E^s_\omega(x): = \bigcup _{\lambda_j<0}V^j_\omega(x)$$ for the \emph{stable Lyapunov subspace} for the word $\omega$ at the point $x$.  

We note that the Random process $\MP^+(M, \nu )$ is not invertible.  Thus, while stable Lyapunov subspaces are defined for $\nu^\Z$-\ae $\omega$ and $\mu$-\ae $x$, there are no well-defined unstable Lyapunov subspaces for $\MP^+(M, \nu )$.  
However, to state the result we will need a notion of SRB-measures (also called $u$-measures) for random sequences of diffeomorphisms. We will state the precise definition (Definition \ref{def:SRB}) in Section \ref{sec:condmeas} 
after introducing fiber-wise unstable manifolds for a related skew product construction.  
Roughly speaking, a $\nu$-stationary measure $\mu$ is SRB if it has absolutely continuous conditional measures along unstable manifolds.  Since we have not yet defined unstable manifolds (or subspaces), we postpone the formal definition 
and give here an equivalent property.  The following is an adaptation of \cite{MR819556}.  

\begin{prop}[{\cite[Theorem VI.1.1]{MR1369243}}]\label{prop:SRBrandom}
Let $M$ be a compact manifold and let $\nu$ be a probability on $\diff^2(M)$ satisfying \eqref{eq:IC2a}.
Then, an ergodic, $\nu $-stationary probability $\mu $ is an SRB-measure if and only if 
\[h_\mu (\MP^+(M, \nu)) = \sum_{\lambda_i>0} m_i \lambda_i\]
where $m_i= \dim V_i- \dim V_{i-1}$ is the multiplicity of the exponent $\lambda_i$.
\end{prop}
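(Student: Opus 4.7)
The plan is to reduce the statement to an invertible skew-product model in which the classical Pesin--Ledrappier--Young characterization of SRB measures applies. Form the two-sided sequence space $\Sigma = (\diff^2(M))^{\Z}$ with shift $\sigma$ and measure $\nu^{\Z}$, and consider the skew product $F(\omega,x) = (\sigma\omega, f_\omega(x))$ on $\Sigma \times M$. The natural-extension construction for stationary measures (see \cite{MR884892,MR1369243}) produces an $F$-invariant probability $\hat\mu$ on $\Sigma\times M$ with base marginal $\nu^{\Z}$ and projection $\mu$ to the $M$-factor. Because $F$ is invertible, Oseledec's theorem applied using the past coordinates of $\omega$ upgrades the filtration \eqref{eq:filtration} to a genuine splitting $TM = E^s_\omega(x) \oplus E^u_\omega(x) \oplus E^c_\omega(x)$ with the same Lyapunov exponents, and the $C^2$-integrability \eqref{eq:IC2a} allows fiber-wise Pesin theory to produce unstable manifolds $W^u_\omega(x)$ tangent to $E^u_\omega(x)$. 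The entropy defined in \eqref{eq:ent} coincides with the fiber entropy $h_{\hat\mu}(F\mid\sigma)$ of the skew product.

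The proof of the equivalence then follows a two-step pattern. First, the random analogue of Ruelle's inequality gives
\[h_\mu(\MP^+(M,\nu))\le\sum_{\lambda_i>0}m_i\lambda_i\]
unconditionally, by estimating that the refinements $\bigvee_{k=0}^{n-1}(f_\omega^k)^{-1}\xi$ of a fine partition can be covered by approximately $\exp\bigl(n\sum_{\lambda_i>0}m_i\lambda_i\bigr)$ atoms using the exponential expansion rates of $Df_\omega^n|_{E^u}$, together with the integrability of $\log^+|f|_{C^1}$ and Birkhoff's theorem applied to the shift $\sigma$. For the direction $(\Leftarrow)$, choose a measurable partition $\eta$ of $\Sigma\times M$ subordinate to the fiber-wise unstable foliation $\{W^u_\omega\}$ and satisfying $F^{-1}\eta\succeq\eta$. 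Rokhlin's identity together with the variational characterization of fiber entropy yields
\[h_\mu(\MP^+(M,\nu))\ge H_{\hat\mu}(\eta\mid F^{-1}\eta),\]
and when $\hat\mu$ has absolutely continuous conditionals on unstable leaves (the definition of SRB to be given in Section \ref{sec:condmeas}), a direct Jacobian computation on the leaves evaluates the right-hand side as $\int\log|\det Df_\omega|_{E^u_\omega(x)}|\,d\hat\mu = \sum_{\lambda_i>0}m_i\lambda_i$, producing the reverse inequality.

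For the converse, assume the entropy formula. Using the same subordinate partition $\eta$, one establishes the general upper bound
\[H_{\hat\mu}(\eta\mid F^{-1}\eta)\le\int\log|\det Df_\omega|_{E^u_\omega(x)}|\,d\hat\mu,\]
and the hypothesized equality $h_\mu(\MP^+(M,\nu)) = \sum_{\lambda_i>0}m_i\lambda_i$ forces this bound to be attained. The crucial analytic input is that equality is possible only when the conditional measures of $\hat\mu$ on atoms of $\eta$ are absolutely continuous with respect to Riemannian leaf volume; a standard saturation argument then upgrades absolute continuity on $\eta$-atoms to absolute continuity on entire unstable manifolds, identifying $\mu$ as SRB. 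The main obstacle is exactly this sharpness step: extracting absolute continuity from equality in the Rokhlin-type bound requires reproducing the delicate conditional-entropy calculus of Ledrappier--Young in the random/skew-product setting, which is carried out in detail in \cite[Chapter VI]{MR1369243} and whose conclusion we are quoting.
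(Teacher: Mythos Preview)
The paper does not prove this proposition at all; it is quoted directly from \cite[Theorem VI.1.1]{MR1369243} (and the more general skew-product version, Proposition~\ref{prop:fiberSRB}, is likewise cited from \cite{MR1646606}). Your sketch accurately outlines the Ledrappier--Young strategy carried out in that reference, and you yourself acknowledge at the end that you are summarizing the argument from \cite[Chapter VI]{MR1369243}, so there is nothing to compare.
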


We introduce some terminology for invariant measurable subbundles.
Given a subgroup $\Gamma\subset \diff^2(M)$, we have an induced the action of $\Gamma$ on sub-vector-bundles of the tangent bundle $TM$ via the differential.  
Consider a $\nu$ supported on $\Gamma$ and a $\nu$-stationary Borel probability $\mu $ on $M$.  
\begin{enumerate}
\item 
We say a $\mu$-measurable subbundle $V\subset TM$ is  \emph{$\nu$-\as invariant} if \(Df (V(x)) = V(f(x))\)
 for every $\nu$-\ae $f\in \Gamma$ and  $\mu $-\ae $x\in M.$

\item A $(\nu^\N \times \mu) $-measurable family of subbundles $(\omega,x) \mapsto V_\omega(x)\subset T_xM$ is \emph{$\MP^+(\Gamma,\nu )$-invariant} if  for $(\nu ^\N\times \mu )$-\ae $(\omega,x)$
			\[D_xf_\omega V_\omega(x)  = V_{\sigma(\omega)}(f_\omega(x)).\]
Note that subbundles in the filtration \eqref{eq:filtration} are $\MP^+(\Gamma,\nu )$-invariant.

\item  We say a $\MP^+(M,\nu )$-invariant family of subspaces $V_\omega(x)\subset TM$ is \emph{non-random} if  there exists a  $\nu$-\as invariant $\mu $-measurable subbundle $\hat V\subset TM$ with $\hat V(x)= V_\omega(x)$  for $(\nu ^\N \times \mu)$-\ae $(\omega,x)$.
\end{enumerate}

\section{Statement of results: groups of surface diffeomorphisms} 
For all results in this paper, we restrict ourselves to the case that $M$ is a closed surface.  
Equip  $M$ with a background Riemannian metric. 

\label{sec:resultsStationary}
Let $\nunaught$ be a  Borel probability on the group $\diff^2(M)$ satisfying the integrability hypotheses \eqref{eq:IC2a}. 
Let $\munaught$ be an ergodic $\nunaught$-stationary measure on $M$.  
We will assume positivity of the metric entropy: \begin{equation}\label{eq:poz} h_\munaught(\MP^+(M, \nunaught))>0.\end{equation} 
By the fiber-wise Margulis--Ruelle inequality (see Proposition \ref{prop:MR} below) applied to the associated skew product (see Section \ref{sec:skewRDS}), 
\eqref{eq:poz} implies
that the Oseledec's filtration \eqref{eq:filtration} is nontrivial and the exponents satisfy \begin{align}-\infty<\lambda_1<0<\lambda_2<\infty.\label{eq:pozzexp}\end{align}
In particular, the {stable Lyapunov subspace} $E^s_\omega(x)$ corresponds to the  subspace $V^1_\omega(x)$ in \eqref{eq:filtration} and is 1-dimensional.

We now state our first theorem
\begin{theorem}\label{thm:1}
\label{thm:main}
Let $M$ be a closed $C^\infty$ surface. 
Let  $\nunaught$ be a Borel probability measure on $\diff^2(M)$ 
satisfying \eqref{eq:IC2a} and let $\munaught$ be an ergodic, $\nunaught$-stationary Borel probability measure on $M$ with $h_\munaught(\MP^+(M, \nunaught))>0.$ 
Then either
\begin{enumerate}
\item the stable distribution $E^s_\omega(x)$ is non-random, or \item $\munaught$ is SRB.  
\end{enumerate}
\end{theorem}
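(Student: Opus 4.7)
My plan is to reduce Theorem~\ref{thm:1} to the abstract skew-product dichotomy announced in the second paragraph of the abstract. Let $\Sigma:=(\diff^2(M))^\Z$ equipped with the two-sided shift $\sigma$ and the product measure $\nu^\Z$, and consider the invertible skew product $F\colon \Sigma\times M\to\Sigma\times M$, $F(\omega,x)=(\sigma\omega, f_\omega(x))$ with $f_\omega:=f_0$; it preserves $\mu:=\nu^\Z\times\munaught$, and the fiberwise Lyapunov data agree with those of the one-sided process since $E^s_\omega(x)$ only depends on the forward coordinates. Let $\hat\Fol$ be the sub-$\sigma$-algebra of $\Sigma\times M$ generated by the past coordinates $(f_i)_{i<0}$ and $\B(M)$. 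The abstract theorem then yields the dichotomy: either (a) $E^s$ is $\hat\Fol$-measurable, or (b) $\mu$ is fiberwise SRB. By the fiberwise analogue of Proposition~\ref{prop:SRBrandom}, alternative (b) is equivalent to conclusion~(2) of Theorem~\ref{thm:1}.

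To pass from (a) to conclusion~(1), I would invoke a zero-one argument. By Proposition~\ref{prop:OMT}, $E^s_\omega(x)$ is automatically measurable with respect to the future $\sigma$-algebra $\sigma(f_i:i\ge 0)\otimes\B(M)$. If (a) holds it is additionally past-measurable. Since past and future coordinates are independent under $\nu^\Z$, a standard Fubini/independence argument applied slicewise in $x$ forces $E^s_\omega(x)$ to be $\nu^\Z$-almost surely independent of $\omega$, yielding a $\munaught$-measurable line field $\phi$ on $M$ with $E^s_\omega(x)=\phi(x)$ almost surely; the equivariance in Proposition~\ref{prop:OMT} then becomes exactly the $\nu$-almost-sure $\Gamma$-invariance of $\phi$, i.e.\ non-randomness of $E^s$.

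The core of the work is therefore the abstract skew-product dichotomy itself, which I would attack by an exponential drift argument adapted from Benoist-Quint \cite{MR2831114} and Eskin-Mirzakhani \cite{1302.3320}. Assume $E^s$ is not $\hat\Fol$-measurable. Then on a Pesin/Lusin block of positive measure one can select two futures $\omega,\omega'$ sharing the same past, together with a common base point $x$, such that $E^s_\omega(x)\neq E^s_{\omega'}(x)$ at a definite angle. Couple such configurations via fiberwise Pesin stable holonomies, iterate forward for a long synchronization time $n$, and renormalize by the unstable expansion; in the limit one extracts a conditional measure on a fiberwise unstable plaque which is invariant under a nontrivial translation in the $1$-dimensional unstable direction, and on a $1$-dimensional leaf translation invariance forces equivalence with leafwise Lebesgue measure -- that is, fiberwise SRB.

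The principal obstacle will be this exponential-drift step: one has to guarantee that the renormalized drift generated by the angular discrepancy of the two stable directions produces a \emph{nontrivial, transverse} translation symmetry rather than a tangent or degenerate one, and that the synchronization and coupling can be carried out on a set of positive $\nu^\Z$-measure. This needs uniform Pesin estimates (where the $C^2$-integrability \eqref{eq:IC2a} is essential), a measurable selection of past-coupled pairs with distinct stable directions (where the failure of $\hat\Fol$-measurability enters quantitatively), and Lusin-set continuity of stable holonomies. The restriction to surfaces makes the conclusion step comparatively soft -- a single nontrivial translation along the $1$-dimensional unstable leaf already forces absolute continuity -- whereas in higher dimension one would have to close up a subgroup of translations before drawing the same conclusion.
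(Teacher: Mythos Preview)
Your overall strategy matches the paper's: reduce to the abstract skew-product dichotomy (Theorem~\ref{thm:skewproductABS}) and then argue that $\Fol$-measurability of $E^s$ forces non-randomness; your sketch of the exponential drift also captures the paper's Main Lemma (Lemma~\ref{lem:main}) well. However, there is a genuine error in your setup: the product $\nu^\Z\times\munaught$ is \emph{not} $F$-invariant unless $\munaught$ happens to be $\nu$-a.s.\ invariant. Testing against a cylinder depending on the coordinate $\omega_{-1}$ shows that $F$-invariance of the product would force $\munaught(f^{-1}B)=\munaught(B)$ for $\nu$-a.e.\ $f$, which is not assumed. The correct $F$-invariant lift $\mu$ is the one in Proposition~\ref{prop:mudef}, obtained as the weak-$*$ limit $\lim_n (F^n)_*(\nu^\Z\times\munaught)$; its fiber conditionals $\mu_\xi$ genuinely depend on the past $(\omega_i)_{i<0}$, and the $\hat\Fol$-measurability of $\xi\mapsto\mu_\xi$ is precisely one of the hypotheses you must verify before invoking the abstract theorem.

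The error propagates to your zero-one step. You cannot simply appeal to independence of past and future under $\nu^\Z$ ``slicewise in $x$'', because under the correct $\mu$ the point $x$ is correlated with the past. The paper handles this via Proposition~\ref{prop:hopf}: using the explicit form \eqref{eq:thisone} of the $\Fol$-conditionals of $\mu$ (which \emph{are} product-like in the future coordinates once the past and $x$ are frozen), one shows that $\Fol\cap\Gol$ is equivalent to $\{\emptyset,\Sigma\}\otimes\B_M$ modulo $\mu$. Since $(\xi,x)\mapsto E^s_\xi(x)$ is automatically $\Gol$-measurable (Proposition~\ref{prop:measurability}), $\Fol$-measurability then lands it in this trivial algebra, i.e., non-randomness. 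Your independence heuristic is the right intuition, but it must be run against the conditionals of the actual invariant measure, not against the non-invariant product.
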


We have as an immediate corollary.
\begin{corollary}
Let  $\nunaught$  be as in Theorem \ref{thm:1} with $\munaught$ an ergodic, positive entropy, $\nunaught$-stationary probability measure.   Assume there are no $\nunaught$-\as invariant $\munaught$-measurable line-fields.  Then $\munaught$ is SRB.
\end{corollary}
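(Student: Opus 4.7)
The plan is to derive this as an immediate consequence of Theorem \ref{thm:1}, using only the dimension count for the stable distribution under positive-entropy hypothesis.

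First I would note that by the positive-entropy assumption and the fiber-wise Margulis--Ruelle inequality (invoked in the excerpt to derive \eqref{eq:pozzexp}), the Lyapunov spectrum satisfies $-\infty < \lambda_1 < 0 < \lambda_2 < \infty$, so the stable Lyapunov subspace $E^s_\omega(x) = V^1_\omega(x)$ is $1$-dimensional at $\hat\mu$-a.e.\ $x$ and $\hat\nu^\N$-a.e.\ $\omega$. Thus the $(\hat\nu^\N \times \hat\mu)$-measurable family $(\omega,x)\mapsto E^s_\omega(x)$ is a measurable family of lines in $TM$.

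Next I would apply Theorem \ref{thm:1} to $(\hat\nu,\hat\mu)$ to obtain the dichotomy: either $E^s_\omega(x)$ is non-random or $\hat\mu$ is SRB. Suppose the first alternative holds. By the definition of non-random given in item (3) of the preceding enumeration, there exists a $\hat\nu$-almost-surely invariant $\hat\mu$-measurable subbundle $\hat V \subset TM$ with $\hat V(x) = E^s_\omega(x)$ for $(\hat\nu^\N \times \hat\mu)$-a.e.\ $(\omega,x)$. Since $E^s_\omega(x)$ is $1$-dimensional a.e., so is $\hat V(x)$, and hence $\hat V$ is a $\hat\nu$-almost-surely invariant $\hat\mu$-measurable line field on $M$, contradicting the standing hypothesis of the corollary. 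Therefore the second alternative of Theorem \ref{thm:1} must hold, namely $\hat\mu$ is SRB.

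There is no real obstacle here: the corollary is a direct specialization of the main theorem using the fact that surfaces have two-dimensional tangent spaces, so a proper non-trivial Oseledets filtration produced by positive entropy is forced to consist of line fields. The only thing worth being careful about is to check that the ``non-random'' subbundle $\hat V$ produced abstractly by the definition inherits the pointwise dimension of $E^s_\omega(x)$; this is immediate because $\hat V(x) = E^s_\omega(x)$ almost everywhere, and equality of subspaces forces equality of dimensions.
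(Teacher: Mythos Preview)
Your proposal is correct and matches the paper's approach: the paper simply states this as an immediate corollary of Theorem \ref{thm:1} without further argument, and your derivation is exactly the intended one-line reasoning.
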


We note that in \cite{MR2831114}, the authors prove an analogous statement.  Namely, for homogeneous actions satisfying certain hypotheses, any non-atomic stationary measure $\munaught$ is shown to be absolutely continuous along \emph{some} unstable (unipotent) direction.   
Using Ratner Theory, one concludes that the stationary measure $\munaught$ is thus the Haar measure and hence \emph{invariant} for every element of the action.  
In non-homogeneous settings, such are the one considered here and the one considered in \cite{1302.3320}, there is no analogue of Ratner Theory.  Thus, in such settings 
more structure is needed in order to promote the SRB property to absolutely continuity or almost-sure invariance of the stationary measure $\munaught$.   
The next theorem demonstrates that this promotion is possible assuming the existence of an almost-surely invariant volume.   
\begin{theorem}\label{thm:3} 
Let  $\Gamma\subset \diff^2(M)$ be a subgroup and assume $\Gamma$ preserves a probability measure $m$ equivalent to the Riemannian volume on $M$.
Let $\nunaught$ be a probability measure on $\diff^2(M)$ with $\nunaught(\Gamma) = 1$ and satisfying \eqref{eq:IC2a}.  Let $\munaught$ be an ergodic $\nunaught$-stationary Borel probability measure.  Then either 
\begin{enumerate}[label = (\arabic*)]
\item $h_\munaught(\MP^+(\Gamma,\nunaught)) = 0$, 
 \item $h_\munaught(\MP^+(\Gamma,\nunaught)) >0$  
and  the stable distribution $E^s_\omega(x)$ is non-random, or  
\item\label{case3} $\munaught$ is absolutely continuous and  is $\nunaught$-\as $\Gamma$-invariant.
\end{enumerate}

Furthermore, in conclusion \ref{case3}, we will have that $\munaught$ is (up to normalization) the restriction of $m$ to a positive volume subset. 
\end{theorem}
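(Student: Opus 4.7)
The plan is to use Theorem~\ref{thm:1} to reduce to the SRB case: if $h_\munaught(\MP^+(\Gamma,\nunaught))=0$ we are immediately in conclusion~(1), while in the positive-entropy regime either the stable distribution is non-random (conclusion~(2)) or $\munaught$ is SRB. The substantive task is to leverage the $\Gamma$-invariance of the smooth reference $m$ to promote the SRB property to conclusion~\ref{case3}: absolute continuity of $\munaught$ with respect to $m$, $\nunaught$-almost-sure $\Gamma$-invariance, and the ergodic-component description.

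First I would establish the Lyapunov symmetry $\lambda_1+\lambda_2=0$ for $\munaught$. Writing $m=\rho\cdot\mathrm{vol}$, the invariance $f_*m=m$ for $\nunaught$-a.e.\ $f$ is equivalent to the pointwise identity $\log|\det Df(x)|=\log\rho(x)-\log\rho(f(x))$. Since the sum of the random Lyapunov exponents equals $\iint \log|\det Df|\,d\munaught(x)\,d\nunaught(f)$, the coboundary structure cancels under stationarity of $\munaught$ and yields $\lambda_1+\lambda_2=0$. (A truncation of $\rho$ handles any integrability issue for $\log\rho$ against $\munaught$; alternatively, one works directly with the Birkhoff averages $\tfrac{1}{n}\log|\det Df^n_\omega(x)|=\tfrac{1}{n}(\log\rho(x)-\log\rho(f^n_\omega(x)))$, whose limit must vanish $(\munaught\times\nunaught^{\N})$-a.e.) Combined with $\lambda_2>0$, this gives $\lambda_1=-\lambda_2<0$.

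The heart of the proof is upgrading the SRB hypothesis to absolute continuity of $\munaught$. I would pass to the two-sided invertible skew product $F$ over $(\Sigma,\nunaught^{\Z})$ and let $\tilde\munaught$ be the $F$-invariant extension of $\munaught$. By Proposition~\ref{prop:SRBrandom}, SRB gives $h_\munaught(\MP^+)=\lambda_2$. Invariance of metric entropy under inversion (together with equality of the base shift entropies for $\sigma$ and $\sigma^{-1}$) forces the backward fiber entropy to equal $\lambda_2=|\lambda_1|$. This is equality in the backward fiber-wise Margulis--Ruelle inequality, and the random-dynamics Ledrappier--Young characterization of SRB then forces $\tilde\munaught$ to have absolutely continuous conditionals along the fiber-wise stable manifolds as well. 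Combining the two one-dimensional absolute continuity statements with the absolute continuity of the Pesin stable/unstable holonomies in the random setting yields that $\tilde\munaught$ is fiber-wise absolutely continuous with respect to $\mathrm{vol}$, so projecting gives $\munaught\ll m$. The main obstacle is exactly this step: producing absolute continuity along stable manifolds from equality in the backward Ruelle inequality in the skew-product framework, which requires a reverse-time analogue of Proposition~\ref{prop:SRBrandom} and care with the fibered Pesin holonomies for a random sequence of diffeomorphisms.

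For the final invariance claim, write $\munaught=g\cdot m$. Both $\nunaught^{\N}\times\munaught$ and $\nunaught^{\N}\times m$ are invariant under the one-sided skew product $F(\omega,x)=(\sigma\omega,f_{\omega_0}(x))$, with Radon--Nikodym derivative the $\omega$-independent function $\tilde g(\omega,x)=g(x)$; $F$-invariance of this density forces $g\circ f=g$ $m$-a.e.\ for $\nunaught$-a.e.\ $f$, which is the desired $\nunaught$-a.s.\ $\Gamma$-invariance. The level sets of $g$ are then $\Gamma$-invariant mod $m$-null sets, hence $\MP^+(\Gamma,\nunaught)$-invariant, so ergodicity of $\munaught$ forces $g$ to take a single positive value on its support. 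Thus $\munaught$ is, up to normalization, the restriction of $m$ to a $\Gamma$-invariant set of positive $m$-measure, yielding the final assertion of conclusion~\ref{case3}.
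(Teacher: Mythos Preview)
Your reduction via Theorem~\ref{thm:1} is correct, and your argument for the final paragraph (upgrading $\munaught\ll m$ to $\nunaught$-a.s.\ invariance and the ergodic-component description) is essentially fine once you pass to the invertible two-sided skew product. The genuine gap is in the step $\lambda_1+\lambda_2=0$.

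The identity $\log|\det Df(x)|=\log\rho(x)-\log\rho(f(x))$ that you invoke is only an $m$-a.e.\ (equivalently $\mathrm{vol}$-a.e.) identity: the density $\rho=dm/d\mathrm{vol}$ is itself only defined $\mathrm{vol}$-a.e. At this point in the argument you do \emph{not} know that $\munaught\ll m$; indeed that is exactly what you are trying to prove. If $\munaught$ gives positive mass to the $\mathrm{vol}$-null set where $\rho$ is undefined or where the coboundary identity fails, then neither the integrated version $\iint\log|\det Df|\,d\munaught\,d\nunaught=0$ nor the Birkhoff-sum version $\tfrac1n\log|\det Df^n_\omega(x)|=\tfrac1n(\log\rho(x)-\log\rho(f^n_\omega x))$ is available to you $\munaught$-a.e. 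The truncation you mention addresses integrability of $\log\rho$ against $\munaught$, but not the more basic problem that $\rho$ need not be defined on the support of $\munaught$. Without $\lambda_1+\lambda_2=0$ you cannot convert the forward Pesin formula $h=\lambda_2$ into equality in the backward Ruelle inequality $h\le -\lambda_1$, so the reverse-SRB conclusion along stable manifolds is unjustified. (A~posteriori the identity $\lambda_1+\lambda_2=0$ is true in case~(3), but your route to it is circular.)

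The paper avoids this entirely by a Hopf-type basin argument. From the SRB property alone (absolute continuity along fiberwise unstable manifolds) together with the transverse absolute continuity of the fiberwise \emph{stable} lamination, one deduces that the ergodic basin of $\mu$ has positive $\nunaught^{\Z}\times m$ measure. Since $m$ is $\nunaught$-a.s.\ invariant, $\nunaught^{\N}\times m$ is $\hat F$-invariant; the normalized restriction of $\nunaught^{\N}\times m$ to the basin is then an $\hat F$-invariant ergodic measure whose Birkhoff averages coincide with those of $\munaught$, forcing it to equal $\nunaught^{\N}\times m_0$ for an ergodic component $m_0$ of $m$ with $m_0=\munaught$. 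This route never computes $\lambda_1+\lambda_2$ and uses the invariance of $m$ only through the $\hat F$-invariance of the product $\nunaught^{\N}\times m$, which holds without any regularity of $\rho$.
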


\section{Skew product (re)formulation of results}\label{sec:cocylereint}
We translate the above results about random products of diffeomorphisms into results about related skew products, as well as introducing a more abstract skew product setting.    This allows us to convert the dynamical properties of random, non-invertible actions, to properties of one-parameter invertible actions and to exploit tools from the theory of  nonuniform hyperbolicity. 
\subsection{Canonical skew product associated to a random dynamical system}\label{sec:skewRDS}  Let $M$ and $\hat\nu$ be as in Section \ref{sec:resultsStationary}. Consider the product space $\Sigma_+\times M$ and define the (non-invertible) skew product
  $\hat F \colon \Sigma_+\times M \to \Sigma_+ \times M$ by $$\hat F\colon (\omega, x) \mapsto (\sigma(\omega), f_\omega(x)).$$
Recall that the measure $\hat\nu^\N$ on $\Sigma_+$ is $\sigma$-invariant.
We have the following reinterpretation of $\nunaught $-stationary measures.  
\begin{proposition}\cite[Lemma I.2.3, Theorem I.2.1]{MR884892} \label{prop:charStatmeas}
For  a Borel probability measure $\munaught $ on $M$ we have that
\begin{enumerate}
\item $\munaught$ is $\nunaught$-stationary if and only if $\nunaught^\N \times \munaught$ is $\hat F$-invariant;
\item a $\nunaught$-stationary measure $\munaught$ is ergodic for $\MP^+(M, \nunaught)$ if and only if $\nunaught^\N \times \munaught$ is ergodic for $\hat F$.  
\end{enumerate}
\end{proposition}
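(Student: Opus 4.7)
The plan is to verify both equivalences by leveraging the product structure of $\nunaught^\N$ together with Fubini.

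For part (1), I would test the $\hat F$-invariance of $\nunaught^\N \times \munaught$ against functions of the product form $\phi(\omega, x) = \psi(\omega_0, \dots, \omega_{n-1})\chi(x)$ with $\psi, \chi$ bounded measurable. Since $\hat F(\omega,x) = (\sigma\omega, f_{\omega_0}(x))$, one has
\[
\int \phi \circ \hat F \, d(\nunaught^\N \times \munaught) \;=\; \int \psi(\omega_1,\dots,\omega_n)\,\chi(f_{\omega_0}(x))\,d\nunaught^\N(\omega)\,d\munaught(x).
\]
The product structure of $\nunaught^\N$ allows $\omega_0$ to be integrated independently of $(\omega_1,\dots,\omega_n)$, giving the factorization $\bigl(\int \psi\,d\nunaught^{\otimes n}\bigr) \cdot \bigl(\int\!\int \chi(f(x))\,d\nunaught(f)\,d\munaught(x)\bigr)$. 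The second factor equals $\int \chi\,d\munaught$ exactly by $\nunaught$-stationarity, so the whole expression equals $\int \phi\,d(\nunaught^\N \times \munaught)$; a monotone class argument closes the forward direction. For the reverse direction, applying $\hat F$-invariance to $\phi(\omega,x) = \chi(x)$ reproduces the stationarity equation.

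For part (2), the direction ``$\hat F$-ergodic $\Rightarrow$ $\munaught$ is $\MP^+$-ergodic'' is immediate: any $\MP^+$-invariant $A \subset M$ lifts to the $\hat F$-invariant cylinder $\Sigma_+ \times A$, whose measure must be $0$ or $1$. For the converse, I would work through the $\hat F$-ergodic decomposition $\nunaught^\N \times \munaught = \int \rho_\alpha\,dP(\alpha)$. Each $\rho_\alpha$ is absolutely continuous with respect to $\nunaught^\N \times \munaught$, so its $\Sigma_+$-marginal is $\sigma$-invariant and absolutely continuous with respect to $\nunaught^\N$; by ergodicity of the Bernoulli shift $(\Sigma_+,\sigma,\nunaught^\N)$ this marginal equals $\nunaught^\N$ for $P$-a.e.\ $\alpha$. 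Disintegrating $\rho_\alpha = \int \delta_\omega \otimes \rho_\alpha^\omega \, d\nunaught^\N(\omega)$, the $\hat F$-invariance translates to the transfer rule $(f_{\omega_0})_* \rho_\alpha^\omega = \rho_\alpha^{\sigma\omega}$ almost surely. A reverse-martingale argument on the decreasing filtration $\mathcal{G}_n = \sigma(\omega_n, \omega_{n+1},\dots)$, combined with the triviality of the tail $\sigma$-algebra of $\nunaught^\N$ (Kolmogorov's $0$--$1$ law), should show that $\rho_\alpha^\omega$ is $\nunaught^\N$-a.s.\ independent of $\omega$; call the common value $\munaught_\alpha$. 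This $\munaught_\alpha$ is then automatically $\nunaught$-stationary, and via part (1) we recover $\rho_\alpha = \nunaught^\N \times \munaught_\alpha$. The resulting decomposition $\munaught = \int \munaught_\alpha\,dP(\alpha)$ is therefore an $\MP^+$-ergodic decomposition of $\munaught$; $\MP^+$-ergodicity forces $P$ to be a point mass, and hence $\hat F$-ergodicity.

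The main obstacle is the step showing that the fiberwise conditional measures $\rho_\alpha^\omega$ are $\omega$-independent: this is where the Bernoulli (tail-trivial) nature of $\nunaught^\N$ is used essentially, and it requires careful conditional-expectation bookkeeping of the type carried out in \cite{MR884892}. Once that step is in hand, the rest of the proposition follows immediately from part (1) and the product structure.
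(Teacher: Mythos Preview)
The paper does not prove this proposition; it is stated with a direct citation to Kifer and no argument is given, so there is no proof in the paper to compare against.

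Your argument for part~(1) is correct and standard. For part~(2), the overall architecture---ergodic decomposition, identification of the $\Sigma_+$-marginal of each component as $\nunaught^\N$ via Bernoulli ergodicity, reduction to showing each ergodic component is a product measure---is the right one and is essentially Kifer's. However, the mechanism you propose for the key step does not work as stated. A reverse martingale on the decreasing filtration $\mathcal G_n=\sigma(\omega_n,\omega_{n+1},\dots)$ cannot show that $\omega\mapsto\rho_\alpha^\omega$ is constant: the transfer rule you wrote gives $\rho_\alpha^\omega=(f_{\omega_0})^{-1}_*\rho_\alpha^{\sigma\omega}$, which exhibits explicit dependence of $\rho_\alpha^\omega$ on $\omega_0$, so $\omega\mapsto\rho_\alpha^\omega$ is not $\mathcal G_n$-measurable for any $n\ge 1$ and no reverse-martingale limit can recover it.

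The argument that actually works runs forward rather than backward. For an $\hat F$-invariant set $B$ one checks that $g(x):=\nunaught^\N\{\omega:(\omega,x)\in B\}$ is $\nunaught$-harmonic, i.e.\ $g(x)=\int g(f(x))\,d\nunaught(f)$; Jensen's inequality together with the stationarity of $\munaught$ then forces $g(f(x))=g(x)$ for $\nunaught$-a.e.\ $f$ and $\munaught$-a.e.\ $x$, so $g$ is $\munaught$-a.s.\ constant by $\MP^+$-ergodicity. Finally the \emph{forward} martingale $E[\,1_B(\cdot,x)\mid\sigma(\omega_0,\dots,\omega_{n-1})\,]=g(f_\omega^n(x))$ converges a.s.\ to $1_B(\omega,x)\in\{0,1\}$, forcing that constant to be $0$ or $1$. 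Since you explicitly defer this step back to the reference being cited, your proposal as written is a reduction to the source rather than an independent proof.
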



\def\fthere{Writing the cocycle as $f^n_\xi$ is standard in the literature but is  somewhat ambiguous.   We write 
$(f_{\xi})\inv$ to indicate the diffeomorphism that is the inverse of $f_\xi\colon M\to M$.  The symbol $f_\xi\inv$ indicates $(f_{\theta\inv(\xi)})\inv$.}

We construct a canonical  invertible skew product (the natural extension)  where  tools from Pesin theory can be applied to the fiber dynamics. 
Let $\Sigma:= (\diff^r(M))^\Z$ be the space of bi-infinite sequences and equip $\Sigma$ with the product measure  
	$\nunaught^\Z.$
We again write  $\sigma\colon \Sigma\to \Sigma$ for the left shift  
	$ (\sigma(\xi))_i = \xi_{i+1}  .$
 Given $$\xi= (\dots, f_{-2}, f_{-1}, f_0, f_1,f_2, \dots ) \in \Sigma$$ define $f_\xi := f_0$ and define the 
the (invertible) skew product $F \colon \Sigma \times M\to \Sigma \times M$
by \begin{equation}\label{eq:skewdefn}F\colon (\xi, x) \mapsto (\sigma(\xi), f_\xi(x)).\end{equation}

We have the following proposition  producing the measure whose properties we will study for the remainder.  
 \begin{proposition}[{\cite[Proposition I.1.2]{MR1369243}}]
 \label{prop:mudef} Let $\munaught$ be a $\nunaught$-stationary Borel probably measure.
 There is a unique $F$-invariant Borel probability measure $\mu$ on $\Sigma \times M$
whose image under the canonical  projection $\Sigma\times M\to \Sigma_+\times M$  is  $\nunaught^\N\times \munaught$.

Furthermore, $\mu$ projects to $\nunaught^\Z$ and $\munaught$, respectively, under the canonical projections $\Sigma\times M\to \Sigma$ and $\Sigma\times M\to M$ and 
is equal to the weak-$*$ limit  \begin{equation}\label{eq:mulim}\mu = \lim_{n\to \infty} (F^n)_*(\nunaught^\Z\times \munaught).\end{equation}
\end{proposition}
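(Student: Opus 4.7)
The plan is to construct $\mu$ as a weak-$*$ limit of pushforwards, and then deduce uniqueness from the independence that the projection to $\Sigma_+ \times M$ together with $F$-invariance force on any candidate measure. Set $\mu_n := (F^n)_*(\nunaught^\Z \times \munaught)$ for $n \ge 0$, so that $\mu_0 = \nunaught^\Z \times \munaught$; the aim is to show $\mu_n$ converges to a measure $\mu$ with all the claimed properties.

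For convergence, I would test against continuous cylinder functions $\phi(\xi_{-m}, \ldots, \xi_m, x)$, which are dense in $C_b(\Sigma \times M)$. For $n > m$, the coordinates of $\sigma^n \xi$ at positions $-m, \ldots, m$ are $(\xi_{n-m}, \ldots, \xi_{n+m})$, while the new $M$-component is $f^n_\omega(x_0) = \xi_{n-1}\circ \cdots \circ \xi_0(x_0)$. Splitting this cocycle as $(\xi_{n-1}\circ \cdots \circ \xi_{n-m}) \circ h$ with $h := \xi_{n-m-1} \circ \cdots \circ \xi_0$, the product structure of $\nunaught^\Z$ makes $h$ independent of the window coordinates $(\xi_{n-m}, \ldots, \xi_{n+m})$, and the stationarity of $\munaught$ applied inductively gives $h(x_0) \sim \munaught$. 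Relabeling $g_i := \xi_{n-m+i}$ and $y := h(x_0)$ yields
\[
\int \phi\, d\mu_n = \int \phi\bigl((g_0, \ldots, g_{2m}),\ g_{m-1}\circ \cdots \circ g_0(y)\bigr)\, d\nunaught^{2m+1}(g)\, d\munaught(y),
\]
which is independent of $n$ once $n > m$. This gives weak-$*$ convergence of $\mu_n$ to a measure $\mu$ on $\Sigma \times M$. The $F$-invariance of $\mu$ follows from $F_*\mu_n = \mu_{n+1}$ by passing to the limit; the identity $\pi \circ F = \hat F \circ \pi$ together with the $\hat F$-invariance of $\nunaught^\N \times \munaught$ (Proposition \ref{prop:charStatmeas}) gives $\pi_*\mu_n = \nunaught^\N \times \munaught$ for every $n$, hence $\pi_*\mu = \nunaught^\N \times \munaught$; the $\Sigma$- and $M$-marginals are read off the cylinder formula as $\nunaught^\Z$ and $\munaught$.

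For uniqueness, let $\mu'$ be any $F$-invariant measure with $\pi_*\mu' = \nunaught^\N \times \munaught$. Then under $\mu'$ the coordinates $\xi_0, \xi_1, \ldots$ are iid $\nunaught$ and independent of $x \sim \munaught$, and the $\Sigma$-marginal is $\nunaught^\Z$ (the unique shift-invariant extension of $\nunaught^\N$ under $F$-invariance). Applying $\mu' = (F^m)_*\mu'$ and reading off the law of the cylinder $(\xi_{-m}, \ldots, \xi_0, x)$ in the image recovers exactly the same joint law as displayed above, so $\mu$ and $\mu'$ agree on all finite-dimensional cylinders, hence as Borel measures on the Polish space $\Sigma \times M$. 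The main subtlety is the bookkeeping of index overlaps in the cylinder computation; once set up, the independence between the hidden cocycle $h$ and the observed window uses only the product structure of $\nunaught^\Z$ and the stationarity of $\munaught$, so the argument is routine.
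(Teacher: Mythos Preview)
The paper does not give its own proof of this proposition: it is quoted directly from Liu--Qian \cite[Proposition I.1.2]{MR1369243}, and the weak-$*$ limit formula \eqref{eq:mulim} is simply recorded for later use (notably in the proof of Proposition~\ref{prop:hopf}). So there is no in-paper argument to compare against.

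That said, your proof is correct and is essentially the standard one. The computation showing that $\int \phi\, d\mu_n$ stabilizes for cylinder functions $\phi$ depending on $(\xi_{-m},\ldots,\xi_m,x)$ is the heart of the matter, and your index bookkeeping is right: relabeling $g_i=\xi_{n-m+i}$ and using that the hidden cocycle $h=\xi_{n-m-1}\circ\cdots\circ\xi_0$ is independent of the window and pushes $\munaught$ to $\munaught$ by stationarity gives the stated $n$-independent formula. One small point you might make explicit is why the eventually-constant values on cylinder functions assemble into an honest Borel probability measure on the Polish (but non-compact) space $\Sigma\times M$: either observe that every $\mu_n$ has $\Sigma$-marginal $\nunaught^\Z$ and $M$ is compact, so the family is tight and Prokhorov plus your computation gives genuine weak-$*$ convergence; or invoke Kolmogorov extension on the consistent family of finite-dimensional marginals your formula defines. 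Your uniqueness argument via $(F^m)_*\mu'=\mu'$ is clean and recovers the same cylinder law, which suffices since cylinder sets generate the Borel $\sigma$-algebra.
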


Write  $\pi \colon \Sigma\times M \to \Sigma$ for the canonical projection.  We write  $h_{\mu} (F \mid\pi)$ for the conditional metric entropy of $(F, \mu)$ conditioned on the sub-$\sigma$-algebra generated by $\pi\inv$.  
We have the following equivalence.
\begin{proposition}[{\cite[Theorem II.1.4]{MR884892}, \cite[Theorem I.2.3]{MR1369243}}]
We have the equality of entropies
\label{prop:entropiessame}
$h_\munaught(\MP^+(M, \nunaught))= h_{\mu} (F \mid\pi).$
\end{proposition}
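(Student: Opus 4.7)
The plan is to relate $h_{\mu}(F \mid \pi)$ to the corresponding conditional entropy for the one-sided skew product $\hat F$, where the invariant measure has a genuine product structure and the identification with the random-walk entropy is transparent, and then to transfer the identity back to $F$ via the Abramov--Rokhlin formula together with preservation of entropy under natural extensions.

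First I would introduce $\hat F\colon \Sigma_+\times M\to \Sigma_+\times M$, $\hat F(\omega,x)=(\sigma\omega,f_\omega(x))$, and the natural factor $\hat\pi\colon \Sigma_+\times M\to \Sigma_+$. By Proposition \ref{prop:charStatmeas}, the product measure $\nunaught^{\N}\times\munaught$ is $\hat F$-invariant, and its disintegration over $\hat\pi$ assigns $\munaught$ to every fiber. For a finite partition $\xi$ of $M$ and the associated vertical partition $\hat\xi=\{\Sigma_+\times C : C\in\xi\}$, the conditional entropy unwinds as
\[
H_{\nunaught^{\N}\times\munaught}\!\left(\bigvee_{k=0}^{n-1}\hat F^{-k}\hat\xi\,\bigg|\,\hat\pi\right) \;=\; \int H_{\munaught}\!\left(\bigvee_{k=0}^{n-1}(f^{k}_{\omega})^{-1}\xi\right)d\nunaught^{\N}(\omega),
\]
which after dividing by $n$ and letting $n\to\infty$ matches \eqref{eq:ent}. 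A standard reduction---any finite partition of $\Sigma_+\times M$ can be refined by $\hat\zeta\vee\hat\xi$, where $\hat\zeta$ is pulled back from a partition of $\Sigma_+$ (hence contributes nothing to conditional entropy given $\hat\pi$) and $\hat\xi$ is vertical---then yields $h_{\munaught}(\MP^+(M,\nunaught))= h_{\nunaught^{\N}\times\munaught}(\hat F\mid\hat\pi)$.

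Next, by Proposition \ref{prop:mudef}, the forward projection $P\colon\Sigma\times M\to\Sigma_+\times M$, $(\omega,x)\mapsto(\omega^+,x)$, pushes $\mu$ to $\nunaught^{\N}\times\munaught$ and intertwines $F$ with $\hat F$; moreover, the iterates $F^{n}P^{-1}(\B_{\Sigma_+\times M})$ for $n\ge 0$ generate $\B_{\Sigma\times M}$ modulo $\mu$, so $(F,\mu)$ is the natural invertible extension of $(\hat F,\nunaught^{\N}\times\munaught)$. Analogously $(\sigma,\nunaught^{\Z})$ is the natural extension of $(\sigma,\nunaught^{\N})$, and the two extensions are compatible with the factor maps $\pi$ and $\hat\pi$. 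Because natural extensions preserve metric entropy, $h_{\mu}(F)=h_{\nunaught^{\N}\times\munaught}(\hat F)$ and $h_{\nunaught^{\Z}}(\sigma)=h_{\nunaught^{\N}}(\sigma)$. Applying the Abramov--Rokhlin formula to both skew products and subtracting gives $h_{\mu}(F\mid\pi)=h_{\nunaught^{\N}\times\munaught}(\hat F\mid\hat\pi)$, which together with the conclusion of the previous step yields the claimed equality.

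The main obstacle is really bookkeeping rather than conceptual. One must verify (a) that the supremum defining $h_{\nunaught^{\N}\times\munaught}(\hat F\mid\hat\pi)$ can indeed be restricted to vertical partitions of the form $\hat\xi$, and (b) the compatibility of the two natural-extension constructions (on the base shift and on the total skew product) with their respective factor maps $\pi$ and $\hat\pi$, so that Abramov--Rokhlin applies cleanly and the two relative-entropy differences line up. Both points are standard and are precisely what is packaged by the references \cite{MR884892} and \cite{MR1369243} cited in the statement of the proposition, which is why the authors cite rather than re-prove them.
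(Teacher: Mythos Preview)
The paper does not supply a proof of this proposition; it simply records the equality and points to Kifer \cite{MR884892} and Liu--Qian \cite{MR1369243} for the argument. Your outline is therefore not competing with anything in the text, and as a sketch it is the standard route: identify the random-walk entropy with the fiber entropy of the one-sided skew product via the product structure of $\nunaught^{\N}\times\munaught$, and then pass to the two-sided system by natural extension.

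One caution about the second step as you have written it. The subtraction ``apply Abramov--Rokhlin on both sides and cancel $h_{\nunaught^{\Z}}(\sigma)=h_{\nunaught^{\N}}(\sigma)$'' is only legitimate when the base entropy is finite. Here $\nunaught$ is an arbitrary Borel probability on $\diff^2(M)$ subject only to \eqref{eq:IC2a}, so $h_{\nunaught^{\N}}(\sigma)=H(\nunaught)$ can perfectly well be $+\infty$, in which case you are subtracting $\infty-\infty$. The clean way around this (and what the cited references in effect do) is to argue directly at the level of \emph{relative} entropy: the projection $\Sigma\times M\to\Sigma_+\times M$ is a natural extension that is compatible with the factor maps $\pi$ and $\hat\pi$, and relative entropy is preserved under such compatible natural extensions. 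Concretely, for a vertical partition $\hat\xi$ the conditional information functions on either side coincide after projection, so $h_\mu(F,\hat\xi\mid\pi)=h_{\nunaught^{\N}\times\munaught}(\hat F,\hat\xi\mid\hat\pi)$ without ever invoking the total entropies. Your point (a) about reducing to vertical partitions then finishes the job. With that adjustment the argument is complete.
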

 We also note that the \emph{Abramov--Rohlin formula} 
	$$ h_{\mu}(F) =h_{\mu} (F \mid\pi) + h_{\nunaught^\Z}(\sigma)$$
holds in our setting (see for example \cite{MR1179170}.)

\subsection{Abstract skew products}\label{sec:absSkew}
We give a generalization of the setup introduced in Section \ref{sec:skewRDS}.  
Let $(\Omega, \B_\Omega, \nu)$ be a Polish probability space; that is, $\Omega$ has the topology of  a complete separable metric space, $\nu$ is a Borel probability measure, and $\B_\Omega$ is {the $\nu$-completion} of the Borel $\sigma$-algebra.   
Let $\theta\colon (\Omega, \B_\Omega, \nu)\to (\Omega, \B_\Omega, \nu)$ be an invertible, ergodic, measure-preserving transformation.  
Let $M$ be a closed $C^\infty$ manifold.  Fix a background $C^\infty$ Riemannian metric on $M$ and write $\|\cdot\|$ for the norm on the tangent bundle $TM$ and $d(\cdot, \cdot)$ for the induced distance on $M$.  We note that compactness of $M$ guarantees all metrics are equivalent, whence all dynamical objects structures defined below are independent of the choice of metric.

We  consider  a $\nu$-measurable mapping $\Omega\ni\xi\mapsto f_\xi\in \diff^2(M)$.  
\def\fthere{Writing the cocycle as $f^n_\xi$ is standard in the literature but is  somewhat ambiguous.   We write 
$(f_{\xi})\inv$ to indicate the diffeomorphism that is the inverse of $f_\xi\colon M\to M$.  The symbol $f_\xi\inv$ indicates $(f_{\theta\inv(\xi)})\inv$.}
As before, define\footnote{\fthere} a cocycle $\scrF\colon\Omega \times \Z\to \diff^r(M)$, written $\scrF \colon (\xi, n) \mapsto \cocycle$,  by 
\begin{enumerate}
	\item $\cocycle    [\xi] [0]:= \id$, $\cocycle    [\xi] [1] := f_\xi$,
	\item $\cocycle    [\xi] [n]:= f_{\theta^{n-1}(\xi)} \circ \dots \circ f_{\theta(\xi)}\circ f_\xi$ for $ n>0$, and
	\item $\cocycle    [\xi] [n] := (f_{\theta^{-n}(\xi)})\inv \circ \dots \circ (f_{\theta\inv(\xi)})\inv = (f_{\theta^{-n}\xi}^n) \inv $ for $ n<0.$
\end{enumerate}
 We will always assume the following integrability condition 
\begin{align}
&\int \log ^+(|f_\xi|_{C^2}) + \log ^+(|f\inv_\xi|_{C^2}) \ d \nu(\xi) <\infty. \tag{IC}\label{eq:IC2} 
\end{align}

Write $X:= \Omega\times M$ with canonical projection $\pi\colon X\to \Omega$.   For $\xi \in \Omega$, we will write $$M_\xi:= \{\xi\}\times M = \pi\inv (\xi)$$ for the fiber of $X$ over $\xi$.  
On $X$, we define the skew product $F\colon X \to X$  
$$F\colon(\xi,x)\mapsto(\theta(\xi), f_\xi(x)).$$ 
 
Note that $X= \Omega\times M$ has a natural Borel structure.  The main object of study for the  remainder will be $F$-invariant Borel probability measures on $X$ with marginal $\nu$.  We introduce terminology for such measures.  
 \begin{definition}
A probability  measure $\mu$ on $X$ is called 
\emph{$\scrF$-invariant} 
if it is $F$-invariant and  satisfies $$\pi_*\mu = \nu.$$  
Such a measure $\mu$ is said to be \emph{ergodic} if it is $F$-ergodic.  
\end{definition}

\subsubsection{Fiber-wise Lyapunov exponents}
 We define $TX$ to be the fiber-wise tangent bundle 
 $$TX:= \Omega\times TM$$ and $DF\colon TX\to TX$ to be the fiber-wise differential
 $$DF \colon (\xi, (x,v)) \mapsto (\theta(\xi), (f_\xi (x), D_xf_\xi v)).$$

Let $\mu$ be an ergodic, $\scrF$-invariant probability.  We have that $DF$ defines a linear cocycle over the (invertible) measure preserving system $F\colon (X, \mu)\to (X, \mu)$.  
By the integrability condition \eqref{eq:IC2}, we can apply Oseledec's Theorem to $DF$ to obtain a $\mu$-measurable splitting
\begin{equation}\label{eq:OscSplitting}T_{(\xi,x)}X:= \{\xi\}\times T_xM  = \bigoplus_j E^j(\xi,x)\end{equation} and numbers $\lambda_\mu^j$ so that  
for $\mu$-a.e. $(\xi,x)$, and every $v\in E^j(\xi,x)\sm\{0\}$
$$\lim_{n\to \pm \infty} \dfrac 1 n \log  \|DF^n( v)\|= \lim_{n\to \pm \infty} \dfrac 1 n \log  \|D_x \cocycle    [\xi] [n] v\| = \lambda^j_\mu.$$

 \subsubsection{Fiber entropy}

Recalling the  canonical projection   $\pi \colon X\to \Omega$, 
given an $\scrF$-invariant probability measure $\mu$ we write  $h_{\mu} (F \mid\pi)$ for the conditional metric entropy of $(F, \mu)$ conditioned on the sub-$\sigma$-algebra generated by $\pi\inv$.   
We have the following generalized Margulis--Ruelle inequality.
\begin{proposition}[\cite{MR1314494}]\label{prop:MR}
For $\nu$ satisfying \eqref{eq:IC2} and $\mu$ an ergodic $\scrF$-invariant measure we have
\begin{equation}h_{\mu} (F \mid\pi)\le \sum_{\lambda^i_\mu>0}\lambda^i_\mu \ \dim E^i.\label{eq:MR} \end {equation}
\end{proposition}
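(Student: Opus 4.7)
The plan is to adapt the classical geometric proof of the Margulis--Ruelle inequality (Ruelle's original argument) to the skew product setting. The key point is that fiber entropy is computed by partitions that are essentially lifts of partitions of the manifold $M$, and on each fiber the map $f_\xi$ is a $C^2$ diffeomorphism whose local expansion controls how many partition atoms can be separated under $F^{-n}$.

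First, I would reduce to partitions of a convenient form. Since the $\sigma$-algebra $\pi^{-1}\B_\Omega$ is $F$-invariant, a standard argument shows that $h_\mu(F \mid \pi) = \sup_{\Q} h_\mu(F,\Q \mid \pi)$, where the supremum is taken over finite partitions $\Q$ of $X$ of the form $\{\Omega \times Q_i\}$ with $\{Q_i\}$ a finite Borel partition of $M$ into sets of diameter at most $\epsilon$. Disintegrating $\mu = \int \mu_\xi \, d\nu(\xi)$ along $\pi$, one obtains
\[
h_\mu(F,\Q \mid \pi) \;\le\; \int H_{\mu_\xi}\bigl(\Q \bigm| f_\xi^{-1} \Q\bigr) \, d\nu(\xi),
\]
so that it suffices to control the right-hand side uniformly (in a suitable sense) as $\epsilon \to 0$.

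Second, I would carry out the pointwise covering estimate. On each fiber, if $\Q$ is a partition of $M$ into sets of diameter $\le \epsilon$, then for each atom $Q$ the number of atoms $Q'$ with $f_\xi(Q) \cap Q' \neq \emptyset$ is bounded by the number of $\epsilon$-balls needed to cover $f_\xi(Q)$. Using an Oseledec regular point $x \in Q$ as a basepoint and the linearization of $f_\xi$ at $x$ (valid up to multiplicative error that vanishes with $\epsilon$), this count is comparable to $\prod_{\lambda^i_\mu > 0} e^{n \lambda^i_\mu \dim E^i}$ for iterates, giving per-step contribution $\sum_{\lambda^i_\mu > 0} \lambda^i_\mu \dim E^i$ in the logarithm. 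Formally, one applies this estimate to $f_\xi^n$ for large $n$, divides by $n$, and exploits the Oseledec convergence together with the subadditive ergodic theorem to pass to the limit.

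The main obstacle is the absence of any uniform bound on $|Df_\xi|$ or on the Oseledec convergence. One only has the integrability $\int \log^+|f_\xi|_{C^2}\, d\nu < \infty$ and the a.e.\ Oseledec splitting. To handle this one fixes $\delta > 0$ and restricts to a large $\mu$-measure compact set where the linearization error is uniformly controlled and the Oseledec frames are uniformly transverse; the complement is handled via the integrability hypothesis, which bounds its entropy contribution by a quantity proportional to $\int \log^+ |Df_\xi|\, d\nu$ weighted by the measure of the bad set, and hence made arbitrarily small. Letting $\epsilon \to 0$, then $\delta \to 0$, yields the inequality $h_\mu(F \mid \pi) \le \sum_{\lambda^i_\mu > 0} \lambda^i_\mu \dim E^i$. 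This is the content of the Bahnm\"uller--Liu extension of Ruelle's inequality to random dynamical systems, which the paper cites as \cite{MR1314494}.
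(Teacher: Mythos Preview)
The paper does not prove this proposition; it is stated with a citation to \cite{MR1314494} (Bahnm\"uller--Bogensch\"utz) and no argument is given in the text. Your sketch is a reasonable outline of the standard Ruelle-type argument as carried out in that reference, and you correctly identify it as such at the end; there is nothing further to compare.
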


We observe that the $\sigma$-algebra generated by $\pi^{-1}$ is $F$-invariant.  It follows (see for example \cite[Theorem I.4.2(9)]{MR1369243}) that 
$h_{\mu} (F \mid\pi) = h_{\mu} (F \inv \mid\pi).$
In particular, for the fiber-entropy of skew products  we have the reverse Margulis--Ruelle inequality 
$$h_{\mu} (F \mid\pi)\le \sum_{\lambda^i_\mu<0}-\lambda^i_\mu \ \dim E^i.$$
\subsubsection{Fiber-wise SRB measures}

It will follow from an extension of Pesin theory (see Section \ref{sec:PT}) that for any cocycle $\scrF$ satisfying the integrability hypothesis \eqref{eq:IC2} 
and any $\scrF$-invariant measure $\mu$ with positive fiber-entropy that for $\mu$-almost every $(\xi, x) \in \Sigma\times M$ there is an injectively immersed curve $W^u(\xi,x)\subset M_\xi$ tangent to $E^u(\xi,x)$, 
called the \emph{fiber-wise unstable manifold} at $(\xi,x)$.  
We say $\mu$ is \emph{fiber-wise SRB} if the conditional measures of $\mu$ along leaves of $W^u(\xi,x)\subset M_\xi$ are absolutely continuous.  (See 
Definition \ref{def:fiberwiseSRB} for precise statement).

As in the case of random dynamics (Proposition \ref{prop:SRBrandom}), we have the following characterization of fiber-wise SRB measures as those for which \eqref{eq:MR} is an equality.  
\begin{proposition}\label{prop:fiberSRB}
For $\nu$ satisfying 
\eqref{eq:IC2}, an ergodic  $\scrF$-invariant measure  $\mu$ is {fiber-wise SRB} if and only if 
$$h_\mu(F\mid\pi)= \sum_{\lambda^i_\mu>0}\lambda^i_\mu \ \dim E^i.$$
\end{proposition}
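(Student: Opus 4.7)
The plan is to adapt the classical Pesin--Ledrappier--Young entropy formula to the fiber-wise setting of the skew product $F$. Since the $\sigma$-algebra $\pi^{-1}(\B_\Omega)$ is $F$-invariant, conditional entropy $h_\mu(F\mid\pi)$ is captured by partitions that refine fibers. The argument has two directions: the ``if'' (forward) direction is a Pesin-style computation showing that absolutely continuous conditionals on unstable manifolds force equality in the Margulis--Ruelle inequality, and the ``only if'' (reverse) direction, which is the more delicate one, uses the Ledrappier--Young inequality to force equality to imply absolute continuity.

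First I would construct, using the Pesin theory developed in Section \ref{sec:PT}, a measurable partition $\eta$ of $X$ with the following properties: (a) each atom $\eta(\xi,x)$ is contained in the fiber-wise unstable manifold $W^u(\xi,x) \subset M_\xi$ and contains a relatively open neighborhood of $(\xi,x)$ therein; (b) $\eta$ refines $\pi^{-1}(\B_\Omega)$; and (c) $F\eta \ge \eta$, so that $\eta$ is $F^{-1}$-increasing. The standard construction (Pesin blocks of size comparable to the local injectivity radius of $W^u$, combined with a measurable selection of plaques) carries over from the classical setting because the fiber dynamics are $C^2$ and satisfy the integrability \eqref{eq:IC2}, so the tempered-distortion estimates and Lusin-type regularity are available on Pesin sets. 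Standard Rokhlin theory then gives
\[
h_\mu(F\mid \pi) \;=\; h_\mu(F^{-1}\mid \pi) \;=\; H_\mu\bigl(F\eta \mid \eta\bigr).
\]

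Next I would evaluate $H_\mu(F\eta\mid\eta)$ via the disintegration $\{\mu^\eta_{(\xi,x)}\}$. Because $\eta$-atoms lie in fiber-wise unstable leaves along which $F^{-1}$ contracts at rate $-\sum_{\lambda^i_\mu>0}\lambda^i_\mu\,\dim E^i$, a change-of-variables computation using the unstable Jacobian $J^u(\xi,x)=\bigl|\det DF|_{E^u(\xi,x)}\bigr|$, together with the Birkhoff/Oseledec identity $\int\log J^u\,d\mu=\sum_{\lambda^i_\mu>0}\lambda^i_\mu\,\dim E^i$, yields the Ledrappier--Young inequality
\[
H_\mu(F\eta\mid\eta) \;\le\; \sum_{\lambda^i_\mu>0}\lambda^i_\mu\,\dim E^i,
\]
with equality precisely when, for $\mu$-a.e.\ $(\xi,x)$, the conditional $\mu^\eta_{(\xi,x)}$ is equivalent to the Riemannian leaf volume on $W^u(\xi,x)$ (the argument is a Jensen-inequality/entropy-vs.-Jacobian comparison). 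This immediately gives the forward direction: if $\mu$ is fiber-wise SRB, the conditionals are absolutely continuous and one obtains equality.

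For the reverse direction, assuming $h_\mu(F\mid\pi)=\sum_{\lambda^i_\mu>0}\lambda^i_\mu\,\dim E^i$, the equality case of the Jensen step forces $\mu^\eta_{(\xi,x)}$ to be absolutely continuous with density proportional to the Jacobian cocycle, hence equivalent to leaf volume, which is the definition of fiber-wise SRB (Definition \ref{def:fiberwiseSRB}). The main obstacle is ensuring that the classical Ledrappier--Young machinery actually goes through in the skew-product setting despite the noninvertibility of $\theta$ being absent here (it is invertible) but the base $\Omega$ being only a Polish probability space without a smooth structure. The delicate technical point is the measurable construction of the unstable partition $\eta$ uniformly over Pesin blocks in $X$, together with the verification that the disintegration behaves well under $F$; these are handled by the fiber-wise Pesin theory of Section \ref{sec:PT}, after which the entropy-Jacobian comparison is essentially identical to the classical proof.
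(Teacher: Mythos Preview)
The paper does not supply its own proof of this proposition: immediately after the statement it simply remarks that ``this formulation is proven in \cite{MR1646606}.''  Your outline is the standard Ledrappier--Young argument transported to the relative (skew-product) setting, and this is precisely the approach taken in that reference, so in substance you are reproducing the cited proof rather than offering an alternative.

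A couple of small points worth tightening.  First, the identity $h_\mu(F\mid\pi)=H_\mu(F\eta\mid\eta)$ is not automatic from $\eta$ being increasing and refining the fiber partition; you also need that $\bigvee_{n\ge 0}F^{-n}\eta$ generates the Borel $\sigma$-algebra on $X$ modulo $\pi^{-1}(\B_\Omega)$, which follows from the exponential contraction of $F^{-1}$ along unstable plaques together with Proposition~\ref{prop:tempered}.  Second, in the equality case the Jensen step does not directly give equivalence to leaf volume: it gives that the information function $-\log \mu^\eta_{F(\xi,x)}(F\eta(\xi,x))$ coincides a.e.\ with $\log J^u(\xi,x)$, and one then iterates this along the backward orbit and passes to a limit (using bounded distortion on Pesin blocks) to identify the conditional density with the infinite product of Jacobian ratios.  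Both issues are routine once flagged, and the proof you sketch is correct.
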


Proposition \ref{prop:fiberSRB} extends the result for i.i.d.\ random dynamical systems (Proposition \ref{prop:SRBrandom}) 
to the case of abstract skew products.  This formulation is proven in \cite{MR1646606}.

\subsection{Reinterpretation of Theorem \ref{thm:main}}\label{sec:skewreint}
Let the  $M$ and $\hat\nu$ be as in Section \ref{sec:resultsStationary} and let $\hat \mu$ be an ergodic, $\hat\nu$-stationary measure with $h_\mu(\MP^+(M, \hat\nu))>0.$   
  Let $F\colon \Sigma \times M\to \Sigma\times M$ denote the canonical skew product and let $\mu$ be the measure given by Proposition \ref{prop:mudef}.  It follows from Proposition \ref{prop:entropiessame} that $h_\mu(F\mid \pi)>0$.  By the Margulis--Ruelle inequality \eqref{eq:MR}, positivity of the fiber-entropy  implies that $\mu$ has two distinct exponents $-\infty< \lambda^s< 0<\lambda^u<\infty$ that, by construction, are equal to the exponents $\lambda_1, \lambda_2$ in \eqref{eq:pozzexp}.  We have a $\mu$-measurable splitting of $\Sigma\times TM$ into measurable line fields $$\Sigma\times T_{x}M = E^s_{(\xi,x)} \oplus E^u_{(\xi,x)} .$$

For $\sigma\in \{s,u\}$ and $(\xi,x) \in \Sigma\times M$  we write $E^\sigma_\xi(x)\subset TM$ for the subspace with $E^\sigma_{(\xi,x)} = \{\xi\} \times E^\sigma_\xi(x)$.  Projectivizing the tangent bundle $TM$, we obtain a measurable function  $$(\xi,x)\mapsto E^\sigma_\xi(x).$$

For $\xi = (\dots, \xi_{-2}, \xi_{-1}, \xi_0, \xi_1, \xi_2,\dots ) \in \Sigma$ write $\Sigmalocs(\xi)$ and $\Sigmalocu(\xi)$  for the \emph{local stable} and \emph{unstable sets}
$$\Sigmalocs(\xi): = \{ \eta\in \Sigma \mid \eta_i = \xi_i \text{ for all } i\ge 0\}$$
$$\Sigmalocu(\xi): = \{ \eta\in \Sigma \mid \eta_i = \xi_i \text{ for all } i< 0\}. $$
\newcommand\Gol{\mathcal G}
Write $\hat \Fol$ for the sub-$\sigma$-algebra of (the completion of) the Borel $\sigma$-algebra on $\Sigma$ containing sets that are \as saturated by local unstable sets:
$C\in \hat \Fol$ if and only if $C= \hat C \mod \nunaught^\Z$ where $\hat C$ is Borel in $\Sigma$ with $$\hat C  = \bigcup_{\xi\in \hat C }\Sigmalocu(\xi).$$
Similarly, we define $\hat \Gol$ to be the sub-$\sigma$-algebra of $\Sigma$ whose atoms are local stable sets.  
Writing $\B_M$ for the Borel $\sigma$-algebra on $M$ we define the $\sigma$-algebra on $X$ to be the $\mu$-completion of the algebras $\Fol = \hat \Fol \otimes \B_M$ and $\Gol:= \hat \Gol \otimes \B_M$.  

We note that, by construction, the assignments $\Omega\to \diff^2(M)$ given by $\xi \mapsto f_\xi$  and $\xi\mapsto \cocycle  [\xi][-1]$ are, respectively,  $\hat \Gol$- and $\hat \Fol$-measurable.  Furthermore, observing that the stable line fields $E^s_\xi(x)$ depend only on the value of $\cocycle$ for $n\ge 0$, we have the following  straightforward but crucial observation. 
\begin{proposition}\label{prop:measurability} 
The map  
$(\xi,x)\mapsto E^s_\xi(x)$ is $\Gol$-measurable and  the map 
$(\xi,x)\mapsto E^u_\xi(x)$ is $\Fol$-measurable.  
\end{proposition}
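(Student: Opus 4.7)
The plan is to derive the proposition directly from the two-sided characterization of the Oseledec splitting, exploiting that each one-sided Lyapunov asymptotic sees only one half of the bi-infinite sequence $\xi$.

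First I would isolate the stable line using only forward iterates. By Proposition \ref{prop:OMT} applied to the skew product, together with \eqref{eq:pozzexp}, one has for $\mu$-a.e.\ $(\xi,x)$ the intrinsic characterization
$$E^s_\xi(x) = \left\{v \in T_xM \ : \ \limsup_{n\to +\infty}\tfrac{1}{n}\log \|D_x \cocycle  [\xi][n] (v)\| < 0\right\}.$$
By the definition of the cocycle, $\cocycle  [\xi][n]$ for $n \ge 0$ is a composition of $f_{\theta^k \xi}$ for $0\le k < n$. As remarked immediately before the proposition, each $\xi\mapsto f_{\theta^k\xi}$ for $k \ge 0$ depends only on the coordinate $\xi_k$ and is therefore $\hat\Gol$-measurable. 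Hence the right-hand side above is unchanged under $\xi \mapsto \eta$ with $\eta \in \Sigmalocs(\xi)$, so $(\xi,x)\mapsto E^s_\xi(x)$ is constant on atoms of $\hat\Gol \otimes \B_M$. Combined with the Borel measurability of this map (as a section of the projectivized tangent bundle) provided by Oseledec's theorem, this yields $\Gol$-measurability after passing to the $\mu$-completion.

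The unstable bundle is treated by the symmetric argument applied to backward iterates. Here one has analogously
$$E^u_\xi(x) = \left\{v \in T_xM \ : \ \limsup_{n\to +\infty}\tfrac{1}{n}\log \|D_x \cocycle  [\xi][-n] (v)\| < 0\right\}.$$
For $n>0$ the map $\cocycle  [\xi][-n]$ is a composition of $f_{\theta^{k}\xi}^{-1}$ for $-n\le k \le -1$, and each such factor is $\hat\Fol$-measurable in $\xi$ (iterating the observation that $\xi \mapsto \cocycle  [\xi][-1]$ is $\hat\Fol$-measurable). Therefore $E^u_\xi(x)$ is constant on atoms of $\hat\Fol \otimes \B_M$, yielding $\Fol$-measurability by the same reasoning.

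There is no serious obstacle: the entire content of the proposition is that one side of the Oseledec splitting is detected by one-sided Lyapunov asymptotics, which in turn see only one half of $\xi$. The only minor care required is handling the $\mu$-null set where Oseledec fails, which is absorbed into the $\mu$-completions built into the definitions of $\Fol$ and $\Gol$.
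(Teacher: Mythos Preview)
Your argument is correct and is exactly the reasoning the paper has in mind: the paper states the proposition without a separate proof, merely noting in the preceding sentence that $\xi\mapsto f_\xi$ and $\xi\mapsto \cocycle[\xi][-1]$ are respectively $\hat\Gol$- and $\hat\Fol$-measurable, and that $E^s_\xi(x)$ depends only on $\cocycle[\xi][n]$ for $n\ge 0$. You have spelled out precisely this observation, with the appropriate care about the $\mu$-null Oseledec exceptional set being absorbed into the completions.
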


We have the following claim, which follows from the explicit construction of $\mu$ in \eqref{eq:mulim}.  
\begin{proposition} \label{prop:hopf}
The intersection $\Fol\cap \Gol$ is equivalent modulo $  \mu$ to the $\sigma$-algebra $\{\emptyset, \Sigma\}\otimes \B_M.$
\end{proposition}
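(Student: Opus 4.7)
The plan is to extract from the limit formula \eqref{eq:mulim} a product decomposition $\mu = \nunaught^{\Z_{\geq 0}} \otimes \tilde\mu$ that decouples the future coordinates from the past coordinates and the fiber, and then to conclude by a Fubini argument.

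First I would establish the product decomposition. Writing $\xi = (\xi^-, \xi^+)$ with $\xi^- = (\xi_i)_{i<0}$ and $\xi^+ = (\xi_i)_{i \geq 0}$, observe that
\[
f^n_{\theta^{-n}\xi} = f_{\theta^{-1}\xi} \circ \cdots \circ f_{\theta^{-n}\xi}
\]
depends only on $\xi^-$. A direct substitution then gives, for the approximating measures $\mu_n := (F^n)_*(\nunaught^\Z \times \munaught)$,
\[
\mu_n(A^{-} \times A^{+} \times B) = \nunaught^{\Z_{\geq 0}}(A^{+}) \int_{A^{-}} \munaught\bigl((f^n_{\theta^{-n}\xi^-})^{-1}B\bigr)\, d\nunaught^{\Z_{<0}}(\xi^-),
\]
so on rectangles $\mu_n = \nunaught^{\Z_{\geq 0}} \otimes \tilde\mu_n$, where $\tilde\mu_n$ is given by the second factor. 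Passing to the weak-$*$ limit \eqref{eq:mulim}, and testing against continuous product functions $\varphi(\xi^+)\psi(\xi^-,x)$, this structure survives: one obtains $\mu = \nunaught^{\Z_{\geq 0}} \otimes \tilde\mu$, where $\tilde\mu$ is the push-forward of $\mu$ under the projection $(\xi^-,\xi^+,x)\mapsto (\xi^-,x)$. In particular, the fiber conditional $\mu^\xi$ depends only on $\xi^-$, and $\xi\mapsto \mu^\xi$ is $\hat\Fol$-measurable.

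Now take $A \in \Fol \cap \Gol$ and pick representatives $\chi_A = h(\xi^-, x) = k(\xi^+, x)$ $\mu$-a.e., with $h$ being $\hat\Fol \otimes \B_M$-measurable and $k$ being $\hat\Gol \otimes \B_M$-measurable. Applying Fubini to $\mu = \nunaught^{\Z_{\geq 0}} \otimes \tilde\mu$, the identity $h = k$ holds on a set of full product measure; in particular there is a single $\xi^+_0$ such that $h(\xi^-,x) = k(\xi^+_0,x)$ for $\tilde\mu$-a.e.\ $(\xi^-,x)$. Setting $\phi(x) := k(\xi^+_0, x) \in \B_M$, we obtain $\chi_A = \phi$ $\mu$-a.e., so $A$ agrees modulo $\mu$ with $\Sigma \times \{\phi = 1\} \in \{\emptyset, \Sigma\} \otimes \B_M$. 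The reverse inclusion is immediate.

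The main technical obstacle is the passage to the weak-$*$ limit in establishing the product structure, since $\Sigma$ is not compact. One verifies that the rectangle identity for $\mu_n$ persists in the limit by a monotone-class argument starting from bounded continuous product functions; once this product form of $\mu$ is in hand, the conclusion reduces to the clean Fubini computation above. Conceptually, the content of the proposition is that the explicit limit description in Proposition \ref{prop:mudef} forces all of the ``future randomness'' of $\mu$ to be independent of $(\xi^-, x)$, with the entire past-dependence of the fiber absorbed into the single measure $\tilde\mu$.
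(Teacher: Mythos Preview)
Your proof is correct and follows essentially the same idea as the paper's: both extract from the limit formula \eqref{eq:mulim} the fact that, conditioned on $(\xi^-,x)$, the law of $\xi^+$ under $\mu$ is exactly $\nunaught^{\Z_{\ge 0}}$, and then use this to show any $\Fol\cap\Gol$-measurable set depends only on $x$.

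The difference is in packaging. You phrase the key structural fact as a global product decomposition $\mu=\nunaught^{\Z_{\ge 0}}\otimes\tilde\mu$ and finish with a Fubini argument (fixing a good $\xi_0^+$ and setting $\phi(x)=k(\xi_0^+,x)$). The paper instead writes down the $\Fol$-conditional measures explicitly as $d\mu^{\Fol}_{(\xi,x)}(\eta,y)=d\nunaught^{\N}(\eta_0,\eta_1,\ldots)\,\delta_x(y)\,\delta_{\xi_{-1}}(\eta_{-1})\cdots$, and then argues that for a $\Sigmalocs$-saturated representative $\hat A$ the quantity $\mu^{\Fol}_{(\xi,x)}(\hat A)$ does not depend on $\xi$, hence $\hat A\circeq\Sigma\times\tilde A$. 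These are two expressions of the same independence statement; your global product formula is equivalent to the paper's conditional-measure formula, and the Fubini step plays the same role as the paper's observation that $\mu^{\Fol}_{(\xi,x)}(\hat A)=1$ forces $\mu^{\Fol}_{(\xi',x)}(\hat A)=1$ for all $\xi'$. Your version has the minor advantage of making the product structure explicit once and for all; the paper's version avoids the weak-$*$ limit discussion by working directly with conditionals.
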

\begin{proof}
Let $A\in \Fol \cap  \Gol$.  Since $A\in \Gol$, we have that $A \circeq \hat A$ where $\hat A$ is a Borel subset of $\Sigma\times M$ such that for any $(\xi,y)\in \hat A $ and $\eta \in \Sigmalocs(\xi)$, $$(\eta,y) \in \hat A.$$   

We write $\{\mu^\Fol_{(\xi,x)}\}$ and  $\{\mu^\Sigma_{(\xi,x)}\}$, respectively, for  families of conditional probabilities given by  the partition of $\Sigma\times M$ into atoms of $\Fol$  and the partition $\{ \Sigma \times \{x\} \mid x\in M\}$ of $\Sigma\times M$. 
It follows from the construction of $\mu$ given by  \eqref{eq:mulim} that $\mu^\Fol_{(\xi,x)}$ may be taken to be the form 
\begin{align}\label{eq:thisone} d\mu^\Fol_{(\xi,x)}(\eta,y) = d\nunaught ^\N(\eta_0, \eta_1,  \dots) \delta_x(y) \delta _{(\xi_{-1})} (\eta_{-1})
 \delta _{(\xi_{-2})} (\eta_{-2})\dots \end{align}
for every $(\xi,x) \in X$.   

Since $A\in \Fol$ we have $\hat A \in \Fol$.  Thus, for $\mu$-\ae $(\xi,x)\in \hat A$, $$\mu^\Fol_{(\xi,x)}(\hat A)= 1.$$
Furthermore, it follows from \eqref{eq:thisone} and the form of $\hat A$ that if $$\mu^\Fol_{(\xi,x)}(\hat A)= 1$$ then $$\mu^\Fol_{(\xi',x)}(\hat A)= 1$$ for any $\xi' \in \Sigma$.  It follows that $$\mu^\Sigma_{(\xi,x)}\hat A = 1$$ for \ae $(\xi,x) \in \hat A$. 
In particular, $\hat A\circeq   \Sigma \times \td A$ for some set $\td A\in \B_M$. 
\end{proof}

We remark that if $\xi$ projects to $\omega$ under the natural projection $\Sigma\to \Sigma_+$, then the subspace $E^s_\xi(x)$ and the subspace  $E^s_\omega(x)$ given by Proposition \ref{prop:OMT} coincide almost surely.  
It then follows from Proposition \ref{prop:hopf} that  the bundle $E_\omega^s(x)$ in  Theorem \ref{thm:main} is non-random if and only if the bundle $E^s_\xi(x)$ is $\Fol$-measurable.  
Thus, Theorem \ref{thm:main} follows from the following.
\begin{theorem} \label{thm:main2}\label{thm:skewproduct}
Let $\hat\nu$ and $\hat\mu$ be as in Theorem \ref{thm:main}.  
Let $F\colon \Sigma\times M \to \Sigma \times M$ be the canonical skew product let $\mu$ be as in Proposition \ref{prop:mudef}.  Assume  the fiber entropy $h_\mu(F\mid \pi)$ is positive.  Then either $(\xi, x) \mapsto E^s_\xi(x)$ is $\F$-measurable, or $\mu$ is fiber-wise SRB.
\end{theorem}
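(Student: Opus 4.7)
The plan is to prove the contrapositive: assuming $(\xi,x)\mapsto E^s_\xi(x)$ is not $\Fol$-measurable, I will show $\mu$ is fiber-wise SRB. The standing hypothesis $h_\mu(F\mid\pi)>0$, combined with the Margulis--Ruelle inequality (Proposition~\ref{prop:MR}) on surfaces, forces exponents $\lambda^s<0<\lambda^u$ of multiplicity one, and by Proposition~\ref{prop:fiberSRB} the fiber-SRB property is equivalent to the conditional measures $\mu^u_{(\xi,x)}$ of $\mu$ along the fiber-wise unstable manifolds $W^u(\xi,x)\subset M_\xi$ being absolutely continuous with respect to arc length on $W^u(\xi,x)$. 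The fiber-wise Pesin machinery --- stable and unstable manifolds, Lyapunov charts of uniform size on Pesin blocks $\Lambda$ of $\mu$-measure arbitrarily close to $1$, local product structure, absolutely continuous stable holonomies, and a measurable partition subordinate to $W^u$ --- will be developed in Section~\ref{sec:PT} and used throughout.

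The first step is to convert the failure of $\Fol$-measurability into a quantitative geometric statement. Since $E^s$ is automatically $\Gol$-measurable by Proposition~\ref{prop:measurability} and $\Fol\cap\Gol$ is $\mu$-trivial by Proposition~\ref{prop:hopf}, non-$\Fol$-measurability is equivalent to the statement that the conditional distribution of the projective line $E^s_\xi(x)$ given $\Fol$ is genuinely non-degenerate on a set of positive $\mu$-measure. Combining Lusin regularity of $(\xi,x)\mapsto E^s_\xi(x)$ on $\Lambda$ with Poincar\'e recurrence to $\Lambda$, one extracts $\mu$-typical pairs $(\xi,x),(\eta,x)\in\Lambda$ with $\eta\in\Sigmalocu(\xi)$ (same past, differing future) and a uniform angular separation $\angle(E^s_\xi(x),E^s_\eta(x))\geq\alpha_0>0$.

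The heart of the argument is an exponential drift scheme, adapted from Benoist--Quint~\cite{MR2831114} and Eskin--Mirzakhani~\cite{1302.3320} to the fiber-wise, non-invertible setting. Fix a generic $(\xi,x)\in\Lambda$ together with a twin $(\eta,x)$ from the previous step, and choose $y\in W^s_{\mathrm{loc}}(\xi,x)$. Iterate forward: the $M$-distance between $f^n_\xi(x)$ and $f^n_\xi(y)$ contracts at rate $\lambda^s$, and one selects a sequence of return times $n_k\to\infty$ along which $F^{n_k}(\xi,x)$ revisits $\Lambda$ near a second typical reference point $(\xi^*,x^*)$. Reading the renormalised displacement $e^{-\lambda^s n_k}\bigl(f^{n_k}_\xi(y)-f^{n_k}_\xi(x)\bigr)$ in the Lyapunov chart at $(\xi^*,x^*)$, one finds a vector lying approximately along $E^s_{\sigma^{n_k}\xi}(x^*)$, a direction governed by the future of $\sigma^{n_k}\xi$, which by mixing is essentially independent of the future of $\xi^*$ that determines $E^s_{\xi^*}(x^*)$. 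The angular separation $\alpha_0$ from the previous step survives in the limit and forces the limiting displacement to have non-trivial projection onto $E^u_{\xi^*}(x^*)$, producing a genuine infinitesimal translation of $\mu^u_{(\xi^*,x^*)}$ along the one-dimensional curve $W^u(\xi^*,x^*)$. Varying $y$ and averaging via ergodicity yields a one-parameter family of translation invariances of $\mu^u_{(\xi^*,x^*)}$, forcing it to coincide with Lebesgue on $W^u(\xi^*,x^*)$; $F$-invariance then extends this to $\mu$-almost every $(\xi^*,x^*)$, establishing the fiber-SRB property.

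The main obstacle will be the drift step. Because the random composition is non-invertible, the relevant comparison is between orbits whose pasts match but whose futures differ, and all Pesin data along the long return orbit (hyperbolicity rates, chart size, local product structure, stable holonomies) must be controlled uniformly on the block $\Lambda$; this uniform control is precisely the role of the fiber-wise Pesin theory of Section~\ref{sec:PT}. Equally delicate is verifying that the limit displacement does not collapse into $E^s_{\xi^*}(x^*)$: this transversality rests on combining the quantitative angle bound produced above, the $\Gol$-measurability of $E^s$, and the product structure of $\mu$ with respect to $\Fol$ recorded in Proposition~\ref{prop:hopf}.
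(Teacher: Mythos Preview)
Your overall architecture---assume $E^s$ is not $\Fol$-measurable, run an exponential drift to produce a translation symmetry of the conditional measures $\mu^u_{(\xi,x)}$, conclude Lebesgue---matches the paper's, but the drift step as you describe it does not produce any unstable displacement.

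You pick $y\in W^s_{\mathrm{loc}}(\xi,x)$ and iterate both points under $f^n_\xi$. Then $f^{n_k}_\xi(y)-f^{n_k}_\xi(x)$ lies (to first order) along $E^s_{\sigma^{n_k}\xi}(f^{n_k}_\xi(x))$. But you have chosen $n_k$ so that $F^{n_k}(\xi,x)\to(\xi^*,x^*)$ inside the Lusin set $\Lambda$, on which $E^s$ varies continuously; hence $E^s_{\sigma^{n_k}\xi}(f^{n_k}_\xi(x))\to E^s_{\xi^*}(x^*)$ and the renormalized displacement lands exactly in $E^s_{\xi^*}(x^*)$, with zero $E^u_{\xi^*}(x^*)$ component. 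The claim that ``the future of $\sigma^{n_k}\xi$ is by mixing essentially independent of the future of $\xi^*$'' is false: $\sigma^{n_k}\xi\to\xi^*$, so their futures agree in the limit. You mention the twin $(\eta,x)$ but never use it in the drift; the angular separation $\alpha_0$ plays no role in the computation you outline.

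The paper's mechanism for generating the unstable shift is precisely where $\eta$ enters. With $y_j\in W^s_{\xi,\mathrm{loc}}(x)$ at scale $r_j\to 0$, one sets $z_j:=W^s_{\eta,\mathrm{loc}}(y_j)\cap W^u_{\eta,\mathrm{loc}}(x)$; since $W^u_\eta(x)=W^u_\xi(x)$ (common past) but $E^s_\eta(x)\neq E^s_\xi(x)$, transversality gives $z_j\neq x$ with $\|H^u_{(\xi,x)}(z_j)\|\asymp\|H^s_{(\xi,x)}(y_j)\|$ (Lemma~\ref{lem:riemGeom}). One then runs \emph{two} dynamics simultaneously: stopping times $\tau_{1,\epsilon}(j)$ for $\xi$ and $\tau_{2,\epsilon}(j)$ for $\eta$ (defined via Lyapunov norms, quasi-isometric in $j$) are chosen so that $F^{\tau_{2,\epsilon}(j)}(\eta,z_j)$ sits at unstable distance $\asymp\epsilon$ from $F^{\tau_{2,\epsilon}(j)}(\eta,x)$. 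Comparing $\bar\mu$ at $F^{\tau_{1,\epsilon}(j)}(\xi,x)\approx F^{\tau_{1,\epsilon}(j)}(\xi,y_j)$ with $\bar\mu$ at $F^{\tau_{2,\epsilon}(j)}(\eta,x)$ and $F^{\tau_{2,\epsilon}(j)}(\eta,y_j)\approx F^{\tau_{2,\epsilon}(j)}(\eta,z_j)$, and using $\bar\mu_{(\xi,y_j)}=\bar\mu_{(\eta,y_j)}$, one extracts in the limit an affine map $\psi$ with $|\psi(0)|\asymp\epsilon$ and $|D\psi|\asymp 1$ satisfying $\psi_*\bar\mu_q\simeq\bar\mu_q$ (Lemma~\ref{lem:main}).

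A second point: the drift does not directly yield translation invariance. It yields, for each small $\epsilon$, an affine $\psi$ as above on a set $G_\epsilon$ of measure $\ge\delta_0$. One then argues (Lemma~\ref{lem:translation1}) that the closed group $\mathcal A(p)$ of affine $\psi$ with $\psi_*\bar\mu_p\simeq\bar\mu_p$ is non-discrete, hence contains a one-parameter subgroup; a Poincar\'e recurrence argument rules out the scaling subgroup, forcing translations; a further recurrence argument upgrades $\simeq$ to genuine invariance. Your ``varying $y$ and averaging via ergodicity'' does not capture this step.
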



\subsection{Statement of results: abstract skew products}
To prove Theorem \ref{thm:main} we will introduce a generalization of Theorem \ref{thm:main2}, the proof of which consumes Sections \ref{sec:MainLemma}--\ref{sec:lemmaproof}. 
Let $\theta\colon (\Omega, \B_\Omega, \nu) \to (\Omega, \B_\Omega, \nu) $ be as in Section \ref{sec:absSkew}.
Let $M$ be a closed $C^\infty$ surface and let $\scrF$ be a cocycle generated by a $\nu$-measurable map $\xi\mapsto f_\xi$ satisfying the integrability  hypothesis 
\eqref{eq:IC2}.  
Fix $\mu$ an ergodic, $\scrF$-invariant, Borel probability measure on $X= \Omega\times M$.  
We assume $h_\mu(F\mid\pi)>0$ so that $DF$ has two exponents $\lambda^s$ and $\lambda^u$, one  of each sign.  

We say a sub-$\sigma$-algebra $\hat \F \subset \B_\Omega$ is \emph{increasing} if $$\theta(\hat\Fol )= \{ \theta(A)\mid A\in \hat \Fol\}\subset \hat \Fol.$$ That is,   $\hat \Fol$ is increasing if the partition into atoms is an increasing partition in the sense of \cite{MR819556}.
(Alternatively,  $\hat \Fol$ is  increasing if  the map $\theta\inv\colon \Omega\to \Omega$ is $\hat\Fol$-measurable.)
As a primary example, the sub-$\sigma$-algebra of $\Sigma$ generated by local unstable sets is increasing (for $\sigma\colon \Sigma \to \Sigma$).  

Let $\hat \F$ be an increasing sub-$\sigma$-algebra and write $\F$ for the $\mu$-completion of $\hat \F \otimes \B_M$ where $\B_M$ is the Borel algebra on $M$.  We note that $\F$ is an increasing sub-$\sigma$-algebra of $\B_X$.  
Let $\{\mu_\xi\}_{\xi\in \Omega}$ denote the family of conditional probability measures with respect to the partition induced by the projection $\pi\colon X\to \Omega$.  Using the canonical identification of fibers $M_\xi= \{\xi\}\times M$ in $X$ with $M$, by an abuse of notation we consider the map $\xi\mapsto \mu_\xi$ as a measurable map from $\Omega$ to the space of Borel probabilities on $M$.  
As in the previous section, to compare stable distributions in different fibers over $\Omega$ write $E_\xi^s(x)\subset T_xM$ for the subspace with $E^s(\xi,x) = \{\xi\}\times E_\xi^s(x)$.  We then consider $(\xi,x) \mapsto  \{\xi\}\times E_\xi^s(x)$ as a measurable map from $X$ to the projectivization of $TM$.

With the above setup, we now state the main theorem of the paper.   

\label{sec:MainTheoremABSskew}
\begin{theorem} \label{thm:main3}\label{thm:skewproductABS}
Assume  $\mu$ has positive fiber entropy  and that 
\begin{enumerate}
\item $\xi\mapsto \cocycle    [\xi] [-1]$ is $\hat \F$-measurable, and
\item $\xi\mapsto \mu_\xi$ is $\hat \F$-measurable.
\end{enumerate}
Then either $(\xi,x)\mapsto E^s_{\xi}(x)$ is $\F$-measurable or $\mu$ is fiber-wise SRB.  
\end{theorem}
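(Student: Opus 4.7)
The plan is to argue by contrapositive: assume $\mu$ is not fiber-wise SRB and deduce that $(\xi,x)\mapsto E^s_\xi(x)$ must then be $\F$-measurable. The strategy is to adapt the ``exponential drift'' paradigm of Benoist--Quint \cite{MR2831114} and Eskin--Mirzakhani \cite{1302.3320} to the nonuniformly hyperbolic, non-algebraic skew-product setting using fiber-wise Pesin theory. The increasing $\sigma$-algebra $\hat\F$ plays the role of ``the past'' of the dynamics: the hypothesis that $\xi\mapsto\cocycle[\xi][-1]$ is $\hat\F$-measurable makes one step of backward iteration of $F$ compatible with conditioning on $\hat\F$, and the hypothesis that $\xi\mapsto\mu_\xi$ is $\hat\F$-measurable guarantees that the fiber disintegration is constant along atoms of $\hat\F$.

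First, I would install fiber-wise Pesin theory under \eqref{eq:IC2}. For $\mu$-a.e.\ $(\xi,x)$ this produces one-dimensional fiber-wise stable and unstable manifolds $W^s_\xi(x),\, W^u_\xi(x)\subset M_\xi$ tangent to $E^s_\xi(x),\, E^u_\xi(x)$, Lyapunov charts of slowly varying size, and a measurable partition $\eta^u$ subordinate to the fiber-wise $W^u$ foliation. Let $\{\mu^u_{(\xi,x)}\}$ denote the disintegration of $\mu$ along $\eta^u$. By Proposition~\ref{prop:fiberSRB}, $\mu$ fails to be fiber-wise SRB exactly when, on a positive-measure set, $\mu^u_{(\xi,x)}$ is singular with respect to arclength on $W^u_\xi(x)$. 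The goal is to force, from the non-$\F$-measurability of $E^s$, that $\mu^u_{(\xi,x)}$ is invariant under a nontrivial translation along $W^u_\xi(x)$ on a positive-measure set; this is incompatible with singularity, since a Borel probability on a one-dimensional leaf cannot be non-atomic and singular and also translation-invariant under a nonzero vector.

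To produce such translations I would pass to the fibered self-joining $\bar\mu$ of $\mu$ with itself, relatively independent over $\F$. Since atoms of $\F$ have the form (an $\hat\F$-atom of $\Omega$)$\times\{x\}$, the joining $\bar\mu$ is supported on pairs $\bigl((\xi,x),(\xi',x)\bigr)$ with $\xi,\,\xi'$ in a common $\hat\F$-atom and a shared fiber coordinate $x$; the $\hat\F$-measurability of $\xi\mapsto\mu_\xi$ ensures $\bar\mu$ is well-defined with both marginals equal to $\mu$. Failure of $\F$-measurability of $E^s$ translates to the statement that under $\bar\mu$ the two stable lines $E^s_\xi(x),\,E^s_{\xi'}(x)\subset T_xM$ are distinct on a set of positive measure. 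I would then iterate the coupled pair under $F$: because $\hat\F$-atoms are refined under $\theta^{-1}$ but not preserved by $\theta$, the coupling in $\Omega$ relaxes under forward iteration, while the two distinct stable directions produce a controlled divergence of the two orbits in $M$ whose asymptotic direction is governed by the angle between the two stable lines at $x$.

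The technical heart of the argument, which the author indicates will occupy the following sections, is the drift/renormalization step. Following a Pesin-theoretic version of the Benoist--Quint synchronization argument, one would select a random sequence of return times $n_k\to\infty$ along which the $F^{n_k}$-iterates of two $\bar\mu$-typical coupled orbits, renormalized in Lyapunov charts, converge to a well-defined limit displacement inside the fiber-wise unstable manifold; the quantitative input that $E^s_\xi(x)\neq E^s_{\xi'}(x)$ under $\bar\mu$ is what forces this limit to be almost surely nonzero. Invariance of $\mu$ under $F$ then passes through the limit to yield invariance of $\mu^u_{(\xi,x)}$ under a nontrivial translation of $W^u_\xi(x)$, contradicting singularity from the non-SRB hypothesis. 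The principal obstacle, and the reason that this step consumes the bulk of the paper, is the absence of any ambient algebraic structure on $M$: unlike in the homogeneous settings of \cite{MR2831114,1302.3320}, every tangent-space identification, every holonomy between nearby fiber-wise unstable manifolds, and every chart-to-chart transition must be controlled nonlinearly on Pesin blocks of slowly varying size, using the $C^2$-integrability \eqref{eq:IC2}, and the subsequence $n_k$ must be chosen so that the limit displacement exists in a measurable way across $\bar\mu$-typical pairs.
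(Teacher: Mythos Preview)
Your high-level strategy is right and matches the paper: assume $(\xi,x)\mapsto E^s_\xi(x)$ is not $\F$-measurable and run an exponential-drift argument to force the unstable conditionals to be Lebesgue. Your reading of the two $\hat\F$-measurability hypotheses is correct, and your self-joining is equivalent to the paper's direct selection of $\xi,\eta$ in a common $\hat\F$-atom together with $x\in M$. There are, however, two genuine gaps in the mechanism you describe.

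\emph{The drift.} You propose to iterate the coupled pair $((\xi,x),(\xi',x))$ forward under $F$ and read off a controlled divergence of the $M$-coordinates governed by the angle between the stable lines. But the hypothesis is that $\xi\mapsto f_\xi^{-1}$ (not $\xi\mapsto f_\xi$) is $\hat\F$-measurable; for $\xi,\xi'$ in a common $\hat\F$-atom the forward maps $f_\xi$ and $f_{\xi'}$ may be entirely unrelated, so $f_\xi^n(x)$ and $f_{\xi'}^n(x)$ separate in a way having nothing to do with $E^s$. The paper's mechanism is different and more indirect. What $\xi$ and $\eta$ share is the \emph{past}: $W^u_\xi(x)=W^u_\eta(x)$, $H^u_{(\xi,x)}=H^u_{(\eta,x)}$, and $\bar\mu_{(\xi,x)}=\bar\mu_{(\eta,x)}$. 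One introduces an auxiliary point $y_j\in W^s_\xi(x)$ at scale $r_j=\|\restrict{DF^{-j}}{E^s(\xi,x)}\|^{-1}$; since $E^s_\eta(x)\neq E^s_\xi(x)$, the point $z_j:=W^s_\eta(y_j)\cap W^u_\xi(x)$ lies at distance $\asymp r_j$ from $x$ along the common unstable. One then iterates the $\xi$-pair $(x,y_j)$ and the $\eta$-pair $(x,y_j)$ by \emph{separately chosen} stopping times $\tau_{1,\epsilon}(j),\tau_{2,\epsilon}(j)$, and uses the identity $\bar\mu_{(\xi,y_j)}=\bar\mu_{(\eta,y_j)}$ to link the two chains. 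The displacement along $W^u$ is carried by $z_j$, not by forward divergence of a coupled pair. An essential tool you omit is the affine parametrization $H^u_{(\xi,x)}\colon W^u_\xi(x)\to E^u_\xi(x)$ of Proposition~\ref{prop:Stabman}, which conjugates the leaf dynamics to a linear map; without it the output of the drift is a nonlinear self-map of the leaf and no group argument is available.

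\emph{From drift to Lebesgue.} You conclude by asserting that invariance of $\mu^u_{(\xi,x)}$ under a single nontrivial translation contradicts singularity. As stated this fails: if $\mu^u_{(\xi,x)}$ is the conditional probability on a bounded arc it cannot be translation-invariant at all, while if it is the locally finite measure on the full leaf, invariance under one translation $t\mapsto t+a$ does not force Lebesgue (restrict Lebesgue to $a\Z+[0,a/2]$). The paper's Main Lemma instead produces, for \emph{every} small $\epsilon>0$, an affine map $\psi$ with $|D\psi|\in[M^{-1},M]$ and $|\psi(0)|\in[\epsilon/M,M\epsilon]$ satisfying $\psi_*\bar\mu_q\simeq\bar\mu_q$, on a set $G_\epsilon$ of measure $\ge\delta_0$ independent of $\epsilon$. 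Lemma~\ref{lem:translation1} then shows the closed group of affine maps preserving $\bar\mu_p$ up to scalar is non-discrete, rules out the scaling one-parameter subgroup by Poincar\'e recurrence (its unique fixed point is nonzero and is pushed to infinity by $DF$), and finally upgrades ``translation-invariant up to scalar'' to genuine translation invariance by a second recurrence argument. Only then is $\bar\mu_p$ Lebesgue.
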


We recall that in the case that $F$ is the canonical skew product for a random dynamical system and $\hat \Fol$ is the sub-$\sigma$-algebra generated by local unstable sets, writing $\xi = (\dots, f_{-1}, f_0, f_1, \dots)$ the $\hat\F$-measurability of $\xi\mapsto \cocycle    [\xi] [-1] = (f_{-1})^{-1}$ follows from construction.  The $\hat\F$-measurability of $\xi\mapsto \mu_\xi$ follows from the construction of the measure $\mu$ given by \eqref{eq:mulim} in Proposition \ref{prop:mudef}.  
Theorems \ref{thm:skewproduct} and \ref{thm:1}  then follow immediately from Theorem \ref{thm:skewproductABS}.    


\section{Background and notation} \label{sec:BG}
In this section, we continue work in the setting introduced in Sections \ref{sec:absSkew} and \ref{sec:MainTheoremABSskew}.  
We outline extensions of a number of standard facts from the theory of nonuniformly hyperbolic diffeomorphisms to the setting of the fiber-wise dynamics for skew products.  
As previously observed, positivity of the fiber-wise metric entropy $h_\mu(F\mid \pi)$ implies that we have at least one Lyapunov exponent of each sign
$\lambda^s<0<\lambda^u$.  
\newcommand{\eps}{\oldepsilon_0}
For the remainder, fix $0< \eps < \min\{1,\lambda^u/200, -\lambda^s/200\} .$ 

\subsection{Fiber-wise Pesin Theory}\label{sec:PT}
We present an extension of the classical theory of stable and unstable manifolds for nonuniformly hyperbolic diffeomorphism \cite{MR0458490} to the fiber-wise
 dynamics of skew products.  

\subsubsection{Subexponential estimates} 
We have the following standard results that follow from the integrability hypothesis 
\eqref{eq:IC2} and 
tempering kernel arguments (c.f.\ \cite[Lemma 3.5.7]{MR2348606}.)
\begin{prop}\label{prop:tempered}
There is subset $\Omega_0\subset \Omega$ with $\nu(\Omega_0)= 1$ and measurable function $D\colon \Omega_0\to (0,\infty)$ such that for $\nu$-a.e. $\xi\in \Omega_0$ and $n\in \Z$.
\begin{enumerate}
\item $|f_{\theta^n(\xi)}|_{C^1} \le  e^{|n|\eps}D(\xi)$
\item $\lip(Df_{\theta^n(\xi)}) \le   e^{|n|\eps}D(\xi)$.  
\end{enumerate}
\end{prop}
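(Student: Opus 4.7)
The plan is to derive both estimates from the integrability hypothesis \eqref{eq:IC2} via a standard tempering argument. First, I would set
\[\phi(\xi):= \log^+|f_\xi|_{C^1} + \log^+\lip(Df_\xi).\]
Since $\lip(Df_\xi)$ is controlled by $|f_\xi|_{C^2}$ (indeed, $\lip(Df_\xi)\le|f_\xi|_{C^2}$ up to a fixed multiplicative constant depending on the background metric on $M$), the hypothesis \eqref{eq:IC2} gives $\phi\in L^1(\nu)$.

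The core observation is a Borel--Cantelli lemma for integrable observables along a measure-preserving system: for any $\delta>0$ and any non-negative $\psi\in L^1(\nu)$, the layer-cake formula yields
\[\sum_{n\in\Z}\nu\{\xi:\psi(\theta^n\xi)>\delta|n|\}=\sum_{n\in\Z}\nu\{\psi>\delta|n|\}\le 2+\tfrac{2}{\delta}\|\psi\|_{L^1}<\infty,\]
using $\theta$-invariance of $\nu$ for the first equality. By Borel--Cantelli, for $\nu$-a.e.\ $\xi$ there are only finitely many $n\in\Z$ with $\psi(\theta^n\xi)>\delta|n|$; equivalently $\psi(\theta^n\xi)/|n|\to 0$ as $|n|\to\infty$. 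Applying this with $\psi=\phi$ and $\delta=\eps$ produces a full-measure set $\Omega_0$ on which the quantity
\[D(\xi):=\sup_{n\in\Z}\,e^{-|n|\eps}\Bigl(|f_{\theta^n(\xi)}|_{C^1}+\lip(Df_{\theta^n(\xi)})+1\Bigr)\]
is finite; measurability is automatic as a countable supremum of measurable functions. The two inequalities of the proposition are immediate from the definition of $D$.

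The argument is essentially routine once one has \eqref{eq:IC2} and the control of $\lip(Df_\xi)$ by the $C^2$-norm; there is no substantive obstacle. The only point requiring mild care is the insertion of ``$+1$'' (or an equivalent device) so that $D$ is strictly positive and the bounds hold uniformly in $n$ including the degenerate case $n=0$, together with noting that $D\circ\theta$ and $D$ satisfy the tempering relation $e^{-\eps}D(\xi)\le D(\theta\xi)\le e^{\eps}D(\xi)$, which is what is typically needed when this function is invoked in the subsequent Pesin-theoretic constructions.
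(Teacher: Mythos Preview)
Your argument is correct. The paper does not actually supply a proof of this proposition; it simply records it as a standard consequence of \eqref{eq:IC2} together with ``tempering kernel arguments'' and points to \cite[Lemma~3.5.7]{MR2348606}. Your Borel--Cantelli computation is precisely one of the standard ways to establish such a tempering kernel: integrability of $\log^+|f_\xi|_{C^2}$ forces $\phi(\theta^n\xi)\le \eps|n|$ for all but finitely many $n$, and the supremum defining $D$ then absorbs the finitely many exceptional indices. The observation that your $D$ automatically satisfies the slow-variation bound $e^{-\eps}\le D(\theta\xi)/D(\xi)\le e^{\eps}$ is exactly what the cited tempering lemma would deliver, so your write-up is a faithful unpacking of the reference rather than a genuinely different route.
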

\noindent Here $\lip(Df_\xi)$ denotes the Lipschitz constant of the map $x\mapsto D_xf_\xi$ for fixed $\xi.$
 
We also have the  following standard result in the theory of Lyapunov exponents.  
\begin{prop}\label{prop:growth}
There is a measurable  function $L\colon X\to (0,\infty)$   
such that for $\mu$-\ae $(\xi,x)\in X$ and $n \in \Z$
\begin{enumerate}
\item For $v\in E^s(\xi, x)$, 
	\[	 L(\xi,x)\inv \exp(n\lambda^s- |n|\tfrac 1 2 \eps)\|v\|\le  \|Df_{\xi}^n v\|
		\le L(\xi,x) \exp(n\lambda^s+ |n|\tfrac 1 2 \eps)\|v\| . \]
\item For $v\in E^u(\xi, x)$, 
\[	L(\xi,x)\inv  \exp(n\lambda^u- |n|\tfrac 1 2 \eps)\|v\|\le  \|Df_{\xi}^n v\|
		\le L(\xi,x) \exp(n\lambda^u+ |n|\tfrac 1 2 \eps)\|v\|.   \]
\item $\angle\left(E^s(F^n(\xi, x)), E^u(F^n(\xi,x))\right)  >\dfrac{1}{L(\xi, x)} \exp (-|n|\epsilon) $.
\end{enumerate}
\end{prop}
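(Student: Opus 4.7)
The plan is to derive all three estimates directly from Oseledec's multiplicative ergodic theorem applied to the fiber-wise derivative cocycle $DF$, together with the standard subexponential estimates on Oseledec data. No essentially new idea beyond classical Pesin regularity is needed, only its translation to the invertible skew product $(X,F,\mu)$.

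First, I would note that \eqref{eq:IC2} gives $\int \log^+ \|D_x f_\xi^{\pm 1}\|\, d\mu(\xi,x) < \infty$ via the pointwise inequality $\|D_x f_\xi\| \le |f_\xi|_{C^1}$, so Oseledec's theorem applied to $DF$ over $(X,F,\mu)$ yields the splitting $T_{(\xi,x)}X = E^s(\xi,x) \oplus E^u(\xi,x)$ into one-dimensional subspaces (positive fiber entropy plus Proposition \ref{prop:MR} force exactly two exponents $\lambda^s<0<\lambda^u$) together with the pointwise limits
\[ \lim_{n\to\pm\infty}\tfrac{1}{n}\log \|Df_\xi^n v\| = \lambda^\sigma,\qquad v\in E^\sigma(\xi,x)\sm\{0\}. \]

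Second, for items (1) and (2), I would define
\[ L_\sigma(\xi,x) := \sup_{n\in\Z}\max\!\left\{ \|Df_\xi^n|_{E^\sigma}\|\,e^{-n\lambda^\sigma-|n|\eps/2},\ \|Df_\xi^n|_{E^\sigma}\|^{-1}\,e^{n\lambda^\sigma-|n|\eps/2}\right\}, \]
which is $\mu$-measurable (Oseledec data and the cocycle are measurable) and $\mu$-a.e.\ finite because the pointwise limits force the quantity inside the supremum to tend to $0$ on a log scale. Since $E^\sigma$ is one-dimensional, the supremum over $v$ collapses to the operator norm and its inverse, which is exactly what (1) and (2) require. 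For (3), I would set $\alpha(\xi,x) := \angle(E^s(\xi,x),E^u(\xi,x))$, which is Borel and $\mu$-a.s.\ positive; by the regularity part of Oseledec's theorem one has $\tfrac{1}{n}\log \alpha(F^n(\xi,x))\to 0$ $\mu$-a.s., so $L_\alpha(\xi,x) := \sup_n \alpha(F^n(\xi,x))^{-1} e^{-|n|\eps}$ is $\mu$-a.e.\ finite. Then $L := \max(L_s,L_u,L_\alpha)$ does the job.

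The only step that is not a direct unpacking of the Oseledec limits is the subexponential bound on $\alpha(F^n(\xi,x))$. The cleanest route in our two-dimensional fiber setting is to observe that $|\det Df_\xi^n|_{E^s\oplus E^u}|$ grows at rate $n(\lambda^s+\lambda^u)$ while $\|Df_\xi^n|_{E^s}\|\cdot\|Df_\xi^n|_{E^u}\|$ grows at the same rate, so $\sin\alpha(F^n(\xi,x))$, being their ratio, varies subexponentially along the orbit; alternatively, Poincar\'e recurrence applied to the level sets $\{\alpha>\delta\}$ rules out exponential collapse. This is precisely the classical tempering-kernel estimate already invoked for Proposition \ref{prop:tempered}, and it is the only point of the argument that requires more than the bare Oseledec statement itself.
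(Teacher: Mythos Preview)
Your argument is correct and is exactly the standard derivation the paper has in mind; indeed, the paper does not supply a proof of Proposition~\ref{prop:growth} at all, simply introducing it as ``the following standard result in the theory of Lyapunov exponents.'' Your construction of $L_\sigma$ as a supremum (finite by the Oseledec limits) and your determinant identity $\lvert\det Df_\xi^n\rvert\sin\alpha(p)=\|Df_\xi^n|_{E^s}\|\,\|Df_\xi^n|_{E^u}\|\,\sin\alpha(F^n p)$ for the angle bound are precisely the classical Pesin regularity arguments that justify such a statement.
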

\noindent Here $\angle$ denotes the Riemannian angle between two subspaces.

\subsubsection{Stable manifold theorem}
We now consider fiber-wise stable and unstable manifold.  
The existence stable manifolds for diffeomorphisms with non-zero exponents is due to Pesin in the deterministic case \cite{MR0458490}. 
In the case of random dynamical systems, the statements and proofs hold with minor modifications.  
We adapt the version of the stable manifold theorem from \cite{MR1369243}.  

For $n\in \N$ write $$E_n(\xi,x) = E^s_{\theta^n(\xi)}(f_\xi^n(x))\quad \quad H_n(\xi,x) = (E^s_{\theta^n(\xi)}(f_\xi^n(x)))^\perp$$
where $E^\perp$ is the orthogonal complement in the background Riemannian metric.  
\begin{theorem}[Local stable manifold theorem]\label{thm:locPesinStab}
\item For $\mu$-\ae $(\xi,x)\in X$ 
there are numbers $\beta, \gamma, \alpha$ (depending measurably on $(\xi,x)$) such that for any $n\ge 0$ there are  
 $C^{1,1}$ functions $$h_{(\xi,x),n}^s\colon B^s(0, \alpha \exp(-5n\eps))\subset E_n(\xi,x) \mapsto H_n(\xi,x)$$ with  \begin{enumerate}
	\item $h_{(\xi,x),n}^s(0)= 0$
\item 				$Dh_{(\xi,x),n}^s(0)= 0$
	\item $\lip (h_{(\xi,x),n}^s) \le \beta \exp(7n\eps)$ and $\lip (Dh_{(\xi,x),n}^s) \le \beta \exp(7n\eps).$ 
\end{enumerate}
Setting 
\item  
$V^s_n(\xi, x) := \exp_{f_\xi^n (x)} \left( \mathrm{Graph}\left({ h^s_{(\xi, x),n}}  
\right)\right)$ we have 
\begin{enumerate}[resume]
\item $f_{\theta^n(\xi)}(V^s_n(\xi, x)) \subset V^s_{n+1}(\xi,x)$
\item for $z,y\in V^s_0(\xi,x)$
$$d^s_n(F^n(\xi, z), F^n(\xi(y))\le
	\gamma(\xi, x) \exp((\lambda^s+ 7\eps)n)d_0^s(y,z)$$
	where $d^s_n $ denotes the induced Riemannian distance in $V^s_n(\xi, x)$.
\end{enumerate}
\end{theorem}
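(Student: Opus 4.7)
The plan is to adapt the classical Hadamard--Perron / Pesin construction of stable manifolds to the fiber-wise setting, relying on Propositions \ref{prop:tempered} and \ref{prop:growth} to control every constant in a subexponentially tempered way.

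First, I would introduce Lyapunov-adapted inner products $\langle \cdot, \cdot \rangle'_{(\xi,x)}$ on $T_xM$ in which $Df_\xi$ acts as a uniform contraction on $E^s_\xi(x)$ at rate close to $e^{\lambda^s}$ and a uniform expansion on $E^u_\xi(x)$ at rate close to $e^{\lambda^u}$. The standard tempering-kernel construction applied to the function $L(\xi,x)$ of Proposition \ref{prop:growth} produces such norms whose distortion relative to the background Riemannian norm along the orbit $F^n(\xi,x)$ grows at most like $e^{\epsilon|n|}$. In these adapted coordinates, the exponential map $\exp_{f_\xi^n(x)}$ lifts $f_{\theta^n \xi}$ to a map between neighborhoods of $0$ in $E_n \oplus H_n$ and $E_{n+1}\oplus H_{n+1}$ that splits as a hyperbolic linear part $A_n$ plus a nonlinear correction $\phi_n$ satisfying $\phi_n(0)=0$, $D\phi_n(0)=0$, with $C^{1,1}$-norm on a ball of radius $\alpha e^{-5n\epsilon}$ controlled by Proposition \ref{prop:tempered}.

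Next, I would carry out the graph transform on sequences $(h_n)_{n\ge 0}$ of $C^{1,1}$ functions $h_n\colon B^s(0,\alpha e^{-5n\epsilon})\subset E_n \to H_n$ satisfying the fixed-point conditions listed in items (1)--(3) of the theorem. The transform pulls back $\mathrm{Graph}(h_{n+1})$ under the chart representative of $f_{\theta^n\xi}$ and rewrites the preimage as a graph over $E_n$; the choice of radius $\alpha e^{-5n\epsilon}$ is precisely the budget that lets the image at level $n$ fit inside the level-$(n{+}1)$ chart despite the subexponential shrinkage of the Pesin block. Because of the spectral gap between contraction on $E_n$ and expansion on $H_n$, the graph transform is a contraction in an appropriate sup-norm, and the contraction mapping principle produces the fixed sequence $h^s_{(\xi,x),n}$. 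Items (4) and (5) then follow from iterating the hyperbolicity of $A_n$; the slack $7\epsilon$ in (5) absorbs the $e^{\epsilon n}$ distortion between the adapted and Riemannian norms as one translates the adapted contraction rate back into the metric $d^s_n$. For the $C^{1,1}$ regularity, I would differentiate the fixed-point equation to obtain a linear graph transform on Lipschitz sections $\xi\mapsto Dh_n$, whose contraction now depends on the Lipschitz constant of $Df_{\theta^n\xi}$; this is exactly what is furnished by Proposition \ref{prop:tempered}(2).

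The main obstacle will be the bookkeeping that closes the self-consistency loop among the three subexponential exponents $5\epsilon$ (chart radius), $7\epsilon$ (Lipschitz bounds on $h_n$ and $Dh_n$), and $7\epsilon$ (slack in the contraction rate). In particular, one must verify that the image of $B^s(0,\alpha e^{-5n\epsilon})$ under the nonlinear correction $\phi_n$ lies inside $B^s(0,\alpha e^{-5(n+1)\epsilon})\oplus H_{n+1}$ in the adapted coordinates, which forces a precise inequality among these exponents and the global slack $\epsilon<\min\{1,\lambda^u/200,-\lambda^s/200\}$. No new analytic difficulty arises from the random structure beyond this measurability and tempering bookkeeping; once the estimates close, the classical Pesin arguments transfer verbatim, and measurable dependence of $\alpha,\beta,\gamma$ on $(\xi,x)$ is inherited from the measurability of $D(\xi)$ and $L(\xi,x)$.
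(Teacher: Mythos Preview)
Your sketch is a correct outline of the standard Hadamard--Perron/Pesin graph-transform argument adapted to the skew-product setting, and it correctly identifies Propositions \ref{prop:tempered} and \ref{prop:growth} as the inputs that make the tempering bookkeeping close. However, the paper does not actually prove this theorem: it is stated as a known extension of Pesin's work \cite{MR0458490} to random dynamical systems, with the proof deferred to \cite{MR1369243}. So there is nothing to compare at the level of argument---the paper simply cites the result, while you have outlined what such a proof would contain. Your outline is consistent with what one finds in those references.
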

\def\loc{\mathrm{loc}}
\def\locPStab{V^s_{\mathrm {loc}}(\xi,x)}
\def\locPStaby{V^s_{\mathrm {loc}}(\xi,y)}
We define $\locPStab = V^s_0(\xi,x)\subset M$ to be the \emph{local stable manifold} at $x$ for $\xi$.  
We similarly define local unstable manifolds.  

We define the \emph{global stable} and \emph{unstable manifolds} at $x$ for  $\xi$ by
\begin{align}
W^s_\xi(x): = \{ y\in M \mid \limsup_{n\to \infty} \dfrac 1 n \log d (f_\xi^n(x), f_\xi^n(y) )<0\}\\
W^u_\xi(x): = \{ y\in M \mid \limsup_{n\to -\infty} \dfrac 1 n \log d (f_\xi^n(x), f_\xi^n(y) )<0\}.
\end{align}
We have  for $\mu$-\ae $(\xi,x)$ that $W^s_\xi(x)$ is a $C^{1,1}$-injectively immersed curve tangent to $E^s_\xi(x)$.  

For $p =(\xi,x)\in X$ we write 
$$W^s(p) = W^s(\xi,x):= \{\xi\}\times W^s_{\xi}(x), \quad W^u(p) = W^u(\xi,x):= \{\xi\}\times W^u_{\xi}(x)$$ for the associated \emph{fiber-wise stable} and \emph{unstable} manifolds in $X$.

\subsubsection{Lyapunov norm}
\newcommand{\lyap}[1]{\left\vvvert #1\right\vvvert}
Although the derivative cocycle $DF$ is hyperbolic on long time scales, it is convenient at times to use a norm on $TX$ adapted to the dynamics so that hyperbolicity is seen after a single iterate.  The drawback of such a norm will be that it is defined only almost everywhere, varies with $\xi$, and depends measurably on $ x\in M_\xi$,  whereas the original Riemannian metric is constant in $\xi$ and smooth in $x$.  

For  $(\xi,x)\in X$ and $v\in E^s_\xi(x), w\in E^u_\xi(x)$ define 
\begin{align}
&\left( \lyap{v}_{\eps, (\xi,x)}^s\right)^2:= \sum_{n\in \Z}   \|Df^n_\xi v\|^2 e^{ -2 \lambda^s n -  2 \eps |n|}\\
&\left(\lyap w _{\eps, (\xi,x)}^u\right)^2:= \sum_{n\in \Z}   \|Df^n_\xi w\|^2 e^{-  2\lambda^u n - 2 \eps |n|}
\end{align}

It follows from Proposition \ref{prop:growth} that the sums above converge almost everywhere.   Declaring for $v^u\in E^u_\xi(x)$ and $ v^s\in E^s_\xi(x)$ that 
	$$\lyap{v^u+ v^s}^2_{\eps, (\xi,x)}= \lyap{v^s}^2_{\eps, (\xi,x)}+\lyap{v^u}^2_{\eps, (\xi,x)}$$
we obtain a measurable family of norms on $TM$ called the \emph{Lyapunov Norm}.

Measured in the norm $\lyap{\cdot}_{\eps, (\xi,x)}$, the fiber-wise dynamics becomes uniformly hyperbolic via  the following estimate.
\begin{prop}
For $(\xi,x)\in X$ satisfying Proposition \ref{prop:growth}, $v\in E^s_\xi(x), w\in E^u_\xi(x)$, and $k\in \Z$ we have 
\begin{align*}
e^{k\lambda^s -  |k|\eps} \lyap v_{\eps, (\xi,x)}^s\le &
	 \lyap{Df^k_\xi v}_{\eps, F^k(\xi,x)}^s
	 \le e^{k\lambda^s +  |k|\eps} \lyap v_{\eps, (\xi,x)}^s\\
e^{k\lambda^u  - |k|\eps} \lyap w_{\eps, (\xi,x)}^u\le &
	 \lyap{Df^k_\xi w}_{\eps, F^k(\xi,x)}^u
	 \le e^{k\lambda^u  + |k|\eps} \lyap w_{\eps, (\xi,x)}^u.
\end{align*}
\end{prop}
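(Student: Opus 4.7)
The plan is to prove this as a direct unpacking of the definition of the Lyapunov norm, using the cocycle identity and a single reverse-triangle-inequality step to bound the difference between $|m|$ and $|m-k|$. First I would substitute the cocycle relation $Df^n_{\theta^k\xi} \circ Df^k_\xi = Df^{n+k}_\xi$ into the defining series to obtain
\[
\left(\lyap{Df^k_\xi v}_{\eps, F^k(\xi,x)}^s\right)^2 = \sum_{n\in\Z} \|Df^{n+k}_\xi v\|^2 e^{-2\lambda^s n - 2\eps|n|},
\]
then reindex by $m = n+k$ to pull out a factor of $e^{2\lambda^s k}$ and leave the weighted series $\sum_m \|Df^m_\xi v\|^2 e^{-2\lambda^s m - 2\eps|m-k|}$.

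The core step is then to compare the weight $e^{-2\eps|m-k|}$ with $e^{-2\eps|m|}$. The reverse triangle inequality gives $\bigl||m|-|m-k|\bigr|\le |k|$ uniformly in $m$, so
\[
e^{-2\eps|k|}\, e^{-2\eps|m|}\le e^{-2\eps|m-k|}\le e^{2\eps|k|}\, e^{-2\eps|m|}.
\]
Summing termwise against the non-negative quantities $\|Df^m_\xi v\|^2 e^{-2\lambda^s m}$ yields
\[
e^{2\lambda^s k - 2\eps|k|}\bigl(\lyap v^s_{\eps,(\xi,x)}\bigr)^2 \le \bigl(\lyap{Df^k_\xi v}^s_{\eps, F^k(\xi,x)}\bigr)^2 \le e^{2\lambda^s k + 2\eps|k|}\bigl(\lyap v^s_{\eps,(\xi,x)}\bigr)^2,
\]
and taking square roots gives the stated stable estimate. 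The unstable bound is proved by the identical argument with $\lambda^s$ replaced by $\lambda^u$ and the $s$-superscripts by $u$; the sign of the exponent in the weights is the only change, and the triangle-inequality comparison is symmetric in $k$ so nothing else has to be adjusted.

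I do not anticipate a substantive obstacle here: Proposition \ref{prop:growth} already provides the subexponential bounds that guarantee absolute convergence of the defining series, so the reindexing and termwise comparison are legitimate, and the assertion that $v \in E^s_\xi(x)$ implies $Df^k_\xi v \in E^s_{\theta^k\xi}(f^k_\xi(x))$ is built into the $DF$-invariance of the Oseledec splitting \eqref{eq:OscSplitting}. The only mild bookkeeping point is to remember that the Lyapunov norm at the image point $F^k(\xi,x)$ uses the cocycle $\{Df^n_{\theta^k\xi}\}_{n\in\Z}$ rather than $\{Df^n_\xi\}$, which is precisely what makes the cocycle identity the natural first move.
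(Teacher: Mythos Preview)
Your proof is correct and follows essentially the same approach as the paper: substitute the cocycle identity into the defining series, reindex by $m=n+k$ (the paper uses $\ell$), and bound $e^{-2\eps|m-k|}$ against $e^{-2\eps|m|}$ via the reverse triangle inequality before taking square roots. The paper only writes out the stable case and leaves the unstable case implicit, exactly as you suggest.
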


\begin{proof}
We show the first set of inequalities.  For $v\in  E^s_\xi(x)$ and $k\in \Z$, writing $\ell = n+k$ we have 
\begin{align*}\left(\lyap{Df^k_\xi v}^s_{\eps, F^k(\xi,x)}\right)^2:=& \sum_{n\in \Z}   \|Df^{n+k}_\xi v\| ^2e^{ - 2 n  \lambda^s  - 2 |n|\eps }\\
=& \sum_{\ell\in \Z}  \|Df^{\ell}_\xi v\| ^2e^{ -2(\ell-k)\lambda^s  -2 |\ell-k| \eps}.\\
\end{align*}
Noting that $$ e^{-2 |\ell| \eps- 2|k| \eps}\le e^{-2 |\ell-k| \eps}\le e^{-2 |\ell| \eps+ 2|k| \eps}$$
we have 
\begin{align*}\sum_{\ell\in \Z}   \|Df^{\ell}_\xi v\|^2 e^{ -2\ell  \lambda^s  -  2|\ell|\eps }
&\left(e^{ 2k\lambda^s -2|k| \eps} \right)\le  \lyap{Df^k_\xi v}_{\eps, F^k(\xi,x)}^2\\&\le \sum_{\ell\in \Z}   \|Df^{\ell}_\xi v\| ^2e^{ -2\ell  \lambda^s  - 2 |\ell|\eps }\left(e^{ 2k\lambda^s +2|k| \eps} \right) \ .\end{align*}
Thus 
$$ e^{ 2k\lambda^s -2|k| \eps}  \left(\lyap{v}^s_{\eps, (\xi,x)}\right)^2\le\left( \lyap{Df_\xi v}^s_{\eps, F^k(\xi,x)} \right)^2\le e^{ 2k\lambda^s +2|k| \eps}  \left( \lyap{v}^s_{\eps, (\xi,x)}\right)^2. \qedhere $$

\end{proof}

When it is clear from context, we will drop the majority of  sub- and superscripts from the Lyapunov norm.  

\subsection{Affine parameters}
Since each stable and unstable manifold in $X$ is a curve, it has a natural parametrization via the Riemannian arc length.  
We define an alternative parametrization, defined on almost every stable manifold, that conjugates the non-linear dynamics $\restrict {f^n_\xi}{\stabM x \xi}$ and the linear dynamics $\restrict {Df^n_\xi}{E^s _\xi(x)}$.  We sketch  the construction and refer the reader to \cite[Section 3.1] {MR2261075} for additional details.  
 
\begin{prop}\label{prop:Stabman}
For almost every $(\xi,x)$ and any $y\in \stabM x \xi$, there is a $C^{1,1}$ diffeomorphism $$H^s_{(\xi,y)} \colon \stabM x \xi \to T_y \stabM x \xi$$ 
such that 
\begin{enumerate}
\item  restricted to   $\stabM x \xi$ the parametrization  
 intertwines the  nonlinear dynamics $f_\xi$ with the differential $D_yf_\xi$: 
$$ D_y f_\xi \circ  H^s_{(\xi,y)} = H^s_{F(\xi,y)} \circ f_\xi;$$

\item $H^s_{(\xi,y)} (y)= 0$ and $D_yH^s_{(\xi,y)} = \id$;
\item if $z\in \stabM x \xi$ then the change of coordinates   
\[H_{(\xi,y)} ^s \circ \left(H_{(\xi,z)} ^s\right)\inv \colon T_z \stabM x \xi \to T_y \stabM x \xi\] 
is an affine map  with derivative 
$$D_v\left(H_{(\xi,y)} ^s \circ \left(H_{(\xi,z)} ^s\right)\inv\right) = \rho_{(\xi,y)}(z)$$ for any $v\in T_z \stabM x \xi$  
where $ \rho_{(\xi,y)}(z)$  is defined below.  
\end{enumerate}
\end{prop}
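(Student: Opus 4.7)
The plan is to construct $H^s_{(\xi,y)}$ as a limit of rescaled iterates of $f_\xi$ along the stable manifold, imitating the classical construction of Pesin normal forms on one-dimensional stable manifolds and tracking carefully how all estimates depend on $\xi$ via the tempered function $D(\xi)$ of Proposition \ref{prop:tempered} and the Lyapunov regularity function $L(\xi,x)$ of Proposition \ref{prop:growth}. Identify $T_y \stabM x \xi$ with $E^s_\xi(y)$ and, for $z\in \stabM x \xi$ and $n\ge 0$, set
\[
H^s_{(\xi,y),n}(z) := \left(D_y f^n_\xi\big|_{E^s_\xi(y)}\right)^{-1} \exp^{-1}_{f^n_\xi(y)}\bigl(f^n_\xi(z)\bigr),
\]
where $\exp^{-1}$ is taken along $\stabM{f^n_\xi(x)}{\sigma^n\xi}$. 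I would then define $H^s_{(\xi,y)}(z):= \lim_{n\to\infty} H^s_{(\xi,y),n}(z)$.

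The first step is to prove convergence. By Theorem \ref{thm:locPesinStab} the intrinsic distance $d_n^s$ between $f^n_\xi(y)$ and $f^n_\xi(z)$ decays like $\gamma(\xi,y)\, e^{n(\lambda^s + 7\eps)} d^s_0(y,z)$, while $(D_y f^n_\xi|_{E^s})^{-1}$ grows only like $e^{-n(\lambda^s - \eps)}$ by Proposition \ref{prop:growth}. The telescoping difference
\[
H^s_{(\xi,y),n+1}(z) - H^s_{(\xi,y),n}(z)
\]
is controlled by the $C^2$-nonlinearity of $f_{\sigma^n\xi}$ restricted to $\stabM{f^n_\xi(x)}{\sigma^n\xi}$: after one more iterate, the deviation from linearity is at most $\mathrm{lip}(Df_{\sigma^n\xi})\cdot (d_n^s)^2 \le D(\xi) e^{n\eps} (d_n^s)^2$ by Proposition \ref{prop:tempered}. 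Multiplying by $(D_y f^{n+1}_\xi|_{E^s})^{-1}$ gives a bound like $C(\xi,y)\, e^{n(\lambda^s + O(\eps))}$, which is summable since $\lambda^s<0$ and $\eps$ was chosen much smaller than $|\lambda^s|$. The same estimates show that $H^s_{(\xi,y)}$ is $C^{1,1}$ with derivative $\id$ at $y$, giving (2) and the regularity claim.

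Property (1) follows directly by reindexing the limit: using $f^n_{\sigma\xi}\circ f_\xi = f^{n+1}_\xi$ and the cocycle identity $D f^n_{\sigma\xi}|_{E^s}\circ D f_\xi|_{E^s} = D f^{n+1}_\xi|_{E^s}$, one computes
\[
H^s_{F(\xi,y)}\bigl(f_\xi(z)\bigr) = D_y f_\xi \cdot H^s_{(\xi,y)}(z).
\]
For (3), observe that both $H^s_{(\xi,y)}$ and $H^s_{(\xi,z)}$ conjugate the restriction of $f_\xi$ to $\stabM x \xi$ to the linear action of $Df_\xi|_{E^s}$ on a $1$-dimensional space; hence the change of coordinates $H^s_{(\xi,y)}\circ (H^s_{(\xi,z)})^{-1}$ intertwines two linear contractions with the same multiplier, and by the standard rigidity for one-dimensional linearizations of hyperbolic maps it must itself be affine. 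Its derivative is then a well-defined scalar depending only on $(\xi,y,z)$, which we denote $\rho_{(\xi,y)}(z)$.

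The main obstacle is the convergence estimate: one must verify that the tempered, $\xi$-dependent bounds from Propositions \ref{prop:tempered} and \ref{prop:growth} combine so that the telescoping series defining $H^s_{(\xi,y)}$ converges uniformly on compact subsets of $\stabM x \xi$, and that the resulting limit depends measurably on $(\xi, y)$. This is the skew-product analogue of the deterministic Pesin normal-form construction carried out in \cite[Section 3.1]{MR2261075}; the extension to our random setting amounts to substituting uniform constants by the appropriate tempered quantities and ensuring $\eps$ is small enough (as we fixed at the start of Section \ref{sec:BG}) for geometric summation to succeed.
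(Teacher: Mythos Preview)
Your approach is correct but different from the paper's. The paper defines $\rho_{(\xi,y)}(z)$ \emph{first} as the infinite product $\prod_{k\ge 0} J(F^k(\xi,z))/J(F^k(\xi,y))$ of Jacobian ratios along the stable leaf, proves convergence of this product to a Lipschitz function using the subexponential estimates from Propositions \ref{prop:tempered} and \ref{prop:growth}, and then \emph{defines} $H^s_{(\xi,y)}$ by integrating $\rho_{(\xi,y)}$ with respect to arc-length on $\stabM x \xi$. In that setup property (3) is immediate from the multiplicative cocycle identity $\rho_{(\xi,y)}(w)=\rho_{(\xi,y)}(z)\,\rho_{(\xi,z)}(w)$, while (1) and (2) are checked by direct computation following \cite[Section 3.1]{MR2261075}. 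Your construction via limits of rescaled iterates $(D_yf^n_\xi)^{-1}\exp^{-1}_{f^n_\xi(y)}\circ f^n_\xi$ is the dual ``normal form'' route; it makes (1) and (2) transparent, and in one dimension the two constructions produce the same map.

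Your justification of (3), however, is not right as stated. The change of coordinates $\Phi_\xi:=H^s_{(\xi,y)}\circ (H^s_{(\xi,z)})^{-1}$ does \emph{not} intertwine two linear contractions with the same multiplier: from (1) one only gets $\Phi_{F(\xi)}\circ D_zf_\xi = D_yf_\xi\circ \Phi_\xi$, where in general $\restrict{D_yf_\xi}{E^s}\neq \restrict{D_zf_\xi}{E^s}$ and the relation involves $\Phi$ at two different base points, so no self-conjugacy rigidity applies. The fix within your framework is simple: since you take $\exp^{-1}$ in the intrinsic arc-length of the stable leaf, each finite approximation
\[
H^s_{(\xi,y),n}\circ (H^s_{(\xi,z),n})^{-1}\colon v\longmapsto \frac{\|\restrict{D_zf^n_\xi}{E^s}\|}{\|\restrict{D_yf^n_\xi}{E^s}\|}\,v \;+\;(D_yf^n_\xi)^{-1}\,d^s_n\!\big(f^n_\xi(y),f^n_\xi(z)\big)
\]
is already exactly affine; passing to the limit gives an affine map with slope $\lim_n \|\restrict{D_zf^n_\xi}{E^s}\|/\|\restrict{D_yf^n_\xi}{E^s}\|=\rho_{(\xi,y)}(z)$. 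Equivalently, compute $D_wH^s_{(\xi,y)}=\rho_{(\xi,y)}(w)$ directly from your limit and use the cocycle identity for $\rho$; this is exactly where your argument and the paper's meet.
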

We take $(\xi,x)$ to be in the full measure set such that for any $y,z\in \stabM x\xi$ there is some $k\ge 0$ with $f_\xi ^k(z)$ and $f_\xi ^k(y)$ contained in $V^s_{\mathrm {loc}}(F^k(\xi,x))$ and sketch the construction of $H_{(\xi,y)}^s$.   
First consider any  $y,z\in V^s_{\mathrm {loc}}(\xi,x)$  
and define
$$J(\xi, z):= \|D_zf_\xi  v\| \cdot \|v\|\inv$$ for any non-zero $v\in T_z\unstM x \xi$ 
where $\|\cdot\|$ denotes the Riemannian norm on $M$.
We define 
\begin{equation}\label{eq:rho}\rho_{(\xi,y)}(z) := \prod_{k = 0}^\infty\dfrac{J(F^k(\xi, z))}{J(F^k(\xi,y))}\end{equation}
Following  \cite[Section 3.1] {MR2261075}, we have that the right hand side of \eqref{eq:rho} converges uniformly in $z$ to a Lipschitz function.  The only minor modification needed in our setting comes from the subexponential growth of $Df_\xi$ and its Lipschitz constant along orbits given by Proposition \ref{prop:tempered}. 
We may extend the definition of $\rho_{(\xi,y)}(z)$ to any $z,y\in \stabM x \xi$ using that $f_\xi ^k(z)$ and $f_\xi ^k(y)$ are contained in $V^s_{\mathrm {loc}}  (F^k(\xi,x))$ for some $k\ge 0$. 



We now define the affine parameter $H^s_{(\xi,y)}\colon \stabM x \xi\to T_y\stabM x \xi$ as follows.  
We define $H^s_{(\xi,y)}$ to be orientation preserving and  
$$|H^s_{(\xi,y)}(z)|:= \int_y^z \rho_{(\xi,y)}(t) \ dt $$
where $\int_y^z \psi(t) \ dt$ is the integral of the function $\psi$, along the curve from $y$ to $z$ in $\stabM x \xi$,  with respect to the Riemannian arc-length on $\stabM x \xi$.  

It follows from computations in \cite[Lemma 3.2, Lemma 3.3] {MR2261075} that the map $H^s_{(\xi,y)}$ constructed above satisfies the properties above. 

We similarly construct unstable affine parameters $H^u_{(\xi,x)}$ with analogous properties.  

\subsubsection{Parametrization of local stable manifolds}
We use the affine parameters $H^s$ and the background Riemannian norm on $M$ to parametrize local stable manifolds.  
For $p=(\xi,x)\in X$ such that  affine parameters are defined, write 
$$\displaystyle \locstabM  x  \xi := (H^s_x)\inv\left (\{v\in E^s_\xi(x) \mid \|v\| < r\}\right) $$ for the local stable manifold in $M$ and 
$$ \displaystyle \locstabp p =  \locstab  x  \xi := \{\xi\}\times  \locstabM  x  \xi$$ for the corresponding  fiber-wise local stable manifold.  
We use similar notation for local unstable manifolds.

\subsubsection{A bound on distortion}
 
By Lusin's theorem\footnote{Recall that $\Omega$, and hence $X$, are Polish.} for any $\delta>0$ there is a compact subset $\Lambda'\subset \Omega_0\times M$ of measure $\mu(\Lambda')>1-\delta$ 
such that the maps $(\xi,x)\mapsto E^s(\xi,x)$,  $(\xi,x)\mapsto  \locPStab$ and  $(\xi,x)\mapsto  V^u_{\mathrm{loc}}(\xi,x)$ are continuous on $\Lambda'$.  
We will use the following estimates, which follow from the construction of stable manifolds and standard arguments.  

\def\rhere{\hat r}
\begin{lemma}\label{lem:contsets} \label{lem:standardcrap} 

There is a set $\Lambda\subset \Lambda'$ with $\mu(\Lambda)>1-2\delta$, and $\gamma>0$ and $\hat r>0$ 
such that for 
and for any $(\xi,x) \in \Lambda$ and $(\xi,y)\in \Lambda$ with $d(x,y)<\gamma$
the intersection $\locstabM[ \rhere] x \xi \cap \locunstM[ \rhere] y \xi$ is a singleton and the intersection is uniformly transverse.    

Furthermore there is a $C_1>0$   such that
for $(\xi,x) \in \Lambda$ and $(\xi,y)\in \Lambda$ with $d(x,y)<\gamma$, setting $z = \locunstM[ \rhere] x \xi \cap \locstabM[ \rhere] y \xi$
\begin{enumerate}
\item \label{item8:1} $\displaystyle\dfrac 1 {C_1}\le  \|\restrict {D_x\cocycle[\xi][-n]  }{T_x\locunstM[ \rhere] x \xi }\| \cdot \|\restrict{D_z\cocycle[\xi][-n] }{T_z\locunstM[ \rhere] x \xi } \|\inv \le C_1 $ for all $n\ge 0$
\item \label{item8:2} $\dfrac1 {C_1}\le  \displaystyle \|\restrict {D _y \cocycle }{T_y\locunstM[ \rhere] y \xi} \| \cdot \|\restrict{D_z \cocycle }{T_z\locunstM[ \rhere] x \xi} \|\inv \le C_1$ for all $n\ge 0$.  
\end{enumerate}
\end{lemma}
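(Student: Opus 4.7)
The plan is to first refine $\Lambda'$ so that all relevant Pesin-theoretic parameters are uniformly controlled, then use uniform transversality to establish the local product structure, and finally obtain the distortion bounds (\ref{item8:1})--(\ref{item8:2}) from a telescoping chain-rule argument combined with exponential orbit convergence on invariant manifolds.

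For the refinement, by Lusin's and Egorov's theorems applied to Propositions \ref{prop:tempered} and \ref{prop:growth}, together with the measurable parameters $\alpha, \beta, \gamma$ from Theorem \ref{thm:locPesinStab}, one extracts a compact $\Lambda \subset \Lambda'$ with $\mu(\Lambda) > 1 - 2\delta$ on which $D(\xi) \le D_0$, $L(\xi,x)\le L_0$, and the local stable and unstable manifolds are $C^{1,1}$ arcs of uniform length $r_0$ and uniformly bounded $C^{1,1}$-norm. In particular $E^s, E^u$ vary continuously with $\angle(E^s, E^u) \ge \alpha_0 > 0$ on $\Lambda$, and both distributions are H\"older-continuous with uniform constants (the standard Pesin-block estimate). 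Fix $\hat r \le r_0/2$; choosing $\gamma$ small enough in terms of $\hat r$ and $\alpha_0$, for any $(\xi,x),(\xi,y)\in\Lambda$ with $d(x,y)<\gamma$ the arcs $\locunstM[\rhere]{x}{\xi}$ and $\locstabM[\rhere]{y}{\xi}$ are uniformly transverse $C^{1,1}$ curves whose endpoints are close relative to their length, so the implicit function theorem in a fixed exponential chart near $x$ yields a unique transverse intersection point $z$.

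For estimate (\ref{item8:1}), since $E^u$ is one-dimensional, writing $x_{-j} = \cocycle[\xi][-j](x)$ and $z_{-j} = \cocycle[\xi][-j](z)$, the chain rule gives
\[
\log \frac{\|D_x \cocycle[\xi][-n]|_{E^u_\xi(x)}\|}{\|D_z \cocycle[\xi][-n]|_{E^u_\xi(z)}\|}
= \sum_{j=1}^{n}\Bigl(\log \|D_{z_{-j}} f_{\theta^{-j}\xi}|_{E^u(z_{-j})}\| - \log \|D_{x_{-j}} f_{\theta^{-j}\xi}|_{E^u(x_{-j})}\|\Bigr).
\]
Since $z \in \locunstM[\rhere]{x}{\xi}$, backward iteration contracts: Proposition \ref{prop:growth} gives $d(x_{-j}, z_{-j}) \le L_0 e^{-j(\lambda^u - \eps)} d(x,z)$, with both $x_{-j}, z_{-j}$ on the same unstable leaf along which $E^u$ varies Lipschitz-continuously with uniform constant. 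Combined with Proposition \ref{prop:tempered}, which bounds the Lipschitz constant of $Df_{\theta^{-j}\xi}$ by $D_0 e^{j\eps}$, each summand is dominated by $C\, D_0 e^{j\eps}\cdot L_0 e^{-j(\lambda^u - \eps)} d(x,z)$, a geometric series whose partial sums are uniformly bounded in $n$. Exponentiating yields the claimed $C_1$. Estimate (\ref{item8:2}) is analogous using forward iteration: since $y,z$ lie on a common local stable of $y$, one has $d(f^k_\xi(y), f^k_\xi(z)) \le L_0 e^{k(\lambda^s + \eps)} d(y,z)$, and comparing the unit tangents to the two (possibly distinct) unstable leaves at $f^k_\xi(y)$ and $f^k_\xi(z)$ uses the uniform H\"older continuity of $E^u$ on $\Lambda$.

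The main obstacle, and the only deviation from the deterministic bounded-distortion argument, is the absence of uniform $C^2$ control in $\xi$: the $C^1$-norm and Lipschitz constant of $Df_{\theta^k\xi}$ grow only sub-exponentially along the orbit of $\xi$. The proof works precisely because $\eps$ was fixed at the outset to satisfy $\eps < \min(\lambda^u, -\lambda^s)/200$, so the factor $e^{j\eps}$ from tempered growth is strictly dominated by the exponential contraction $e^{-j\lambda^u}$ (resp.\ $e^{j\lambda^s}$) along the invariant manifold. This balance, which is the essence of the random-dynamics extension of Pesin theory, is what makes each telescoping series summable.
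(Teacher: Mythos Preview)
Your proof is correct and follows essentially the same approach as the paper: refine to a Pesin block via Lusin's theorem, obtain the local product structure from uniform transversality, and derive the distortion bounds by telescoping the chain rule against the competition between exponential contraction along invariant manifolds and subexponential growth of derivative Lipschitz constants (Proposition~\ref{prop:tempered}). The paper's own argument is only a one-paragraph sketch making exactly these points.

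One small imprecision worth noting: in your treatment of \ref{item8:2} you invoke ``uniform H\"older continuity of $E^u$ on $\Lambda$'' to compare the tangent directions at $f^k_\xi(y)$ and $f^k_\xi(z)$, but these iterates typically leave $\Lambda$. What is actually used (and what the paper means by saying the directions $D_y\cocycle(T_y\locunstM[\rhere]{y}{\xi})$ and $D_z\cocycle(T_z\locunstM[\rhere]{x}{\xi})$ are ``exponentially asymptotic'') is the inclination-lemma fact that any direction transverse to $E^s$ is exponentially attracted to $E^u$ under forward iteration, with rate governed by $\lambda^u-\lambda^s$ and the tempering constants. This gives the needed exponential decay of the angle between the two tangent lines along the forward orbit, which combines with the Lipschitz bound on $Df_{\theta^k\xi}$ exactly as in your telescoping sum.
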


The estimates follow from the fact that the pairs $\cocycle[\xi][n] (y)$ and $\cocycle [\xi][n] (z)$,   $\cocycle [\xi][-n](x)$ and $\cocycle[\xi][-n](z)$, and $D _y \cocycle (T_y\locunstM[ \rhere] y \xi)$ and $D_z \cocycle (T_z\locunstM[ \rhere] x \xi)$ are exponentially asymptotic while $|f_\xi|_{C^1}$, $\lip(Df^n_{\xi})$, and the Lipschitz constant for the variation of the tangent spaces to $\cocycle [\xi] [-n] (\locunstM x \xi )$ grow subexponentially for  $\xi\in \Omega$ and $(\xi,x)$ satisfying Proposition \ref{prop:Stabman}.  The existence of such a $\Lambda'$ then follows from Lusin's theorem.

\subsection{Families of conditional measures}
\label{sec:condmeas}
The family of fiber-wise unstable manifolds $\{W^u(p)\}_{p\in X}$ forms a partition of a full measure subset of $X$.  However, such a partition is generally non-measurable. To define conditional measures we consider  a measurable partition $\P$ of $X$  such for $\mu$-\ae $p\in X$ there is an $r$ such that $\locunstp p\subset  \P(p)\subset W^u(p)$. 
Such a partition is said to be \emph{$u$-subordinate}.   
 Let $\{\td \mu^{\P}_p\}_{p\in X}$ denote a family of conditional probability measures with respect to such a  partition $\P$. 

\begin{definition}\label{def:fiberwiseSRB}
An $\scrF$-invariant measure $\mu$ is \emph{fiber-wise SRB} if for any $u$-subordinate measurable partition $\P$ with corresponding family of conditional measures $\{\td \mu^{\P}_p\}_{p\in X}$, the measure $\td \mu^\P_p$ is absolutely continuous with respect to Riemannian volume on $\unstp p$ for \ae $p$. 
\end{definition}

In the setting introduced in Sections \ref{sec:resultsStationary} we have the following.  
\begin{definition}\label{def:SRB}
Let $M$ be a closed manifold, $ \nunaught$ a Borel measure on $\diff^2(M)$  
and  let $\hat\mu$ be a $\hat \nu$-stationary probability measure.  We say $\hat\mu$ is \emph{SRB} if the measure $\mu$ given by Proposition \ref{prop:mudef} is fiber-wise SRB for the associated canonical skew product \eqref{eq:skewdefn}.
\end{definition}

\begin{remark}
In fact, it follows from the proof of Proposition \ref{prop:fiberSRB} that $\mu$ is SRB if and only if the conditional measures $\{\td \mu^{\P}_p\}_{p\in X}$ are equivalent to Riemannian volume on $\unstp p$.  See, for example, \cite[Corollary 6.1.4]{MR819556}.
  \end{remark}
 
 Let $\P_1$ and $\P_2$ be two measurable partitions of $X$ subordinate to $\{W^u(p)\}_{p\in X}$.  It follows that, for $\mu$-\ae $p$, the conditional measures $\td \mu^{\P_1}_p$ and  $\td \mu^{\P_1}_p$ coincide, up to a normalization factor, on the intersection $\P_1(p) \cap \P_2(p)$.  

\newcommand{\scond}[2][(\xi, x)]{{ \mu^s_{#1}}\left({#2}\right)}
\newcommand{\ucond}[2][(\xi, x)]{ \mu^u_{#1}\left({#2}\right)}

By fixing a normalization, we  define a \emph{locally-finite}, infinite measure $ \mu^u_p$ on each curve $\unstp p$.  Such a measure will be locally-finite in the internal topology of $\unstp p$ induced, for instance, by the affine parameters.  To construct such a family of measures, first consider a countable sequence of measurable, $u$-subordinated partitions $\P_n$ with the property that for any compact (in the internal topology of $\unstp p $) subset $K\subset \unstp p $ there is a $\P_n$ with  $$\locunstp [1] p \subset \P_n(p)\quad \text{and} \quad K \subset \P_n(p).$$
Then, for almost every $p\in X$ (such that $p$ is contained in the support of $\td \mu^{\P_n}_p$ for every $n$) we may define $$\restrict{ \mu^u_p}{K}:= \dfrac 1 {\td \mu^{\P_n}_p( \locunstp[1] p) } {\td \mu^{\P_n}_p.}$$
One verifies  that for various  choices of $K$ and $\P_n$ the above definition is coherent and uniquely defines $\mu^u_p$.  
We similarly define locally-finite families ${\mu^s_p}$ of infinite measures on  the fiber-wise stable manifolds.  

We remark that the fiber entropy $h_\mu(F\mid \pi)$ is positive if and only if the measures $\mu^u_p$ and $\mu^s_p$ are non-atomic for almost every $p$.

\subsection{Orientation, trivialization, and $\F$-measurable geometric structures}
We recall the main hypotheses in Theorem \ref{thm:skewproductABS}: $\hat \Fol$ is an increasing sub-$\sigma$-algebra of $\B_\Omega$, $\xi\mapsto \cocycle [\xi][-1]$ and $\xi\mapsto \mu_\xi$ are  $\hat \Fol$-measurable.  Since $\hat \Fol\subset \theta\inv (\hat\Fol) $, it follows  for any $n\ge 0$ that $$\xi \mapsto \cocycle[\theta^{-n}(\xi)] [-1]$$ is $\hat \Fol$-measurable whence $$\xi\mapsto \cocycle [\xi][-n]$$ is $\hat \Fol$-measurable for any $n\ge 1$.  
Furthermore, we have that  unstable subspaces and manifolds
		$$E^u_\xi(x)= \{v\in T_xM \mid \lim_{n\to - \infty}\dfrac 1 n |D_xf_\xi^n v| = \lambda^u\}$$ 
		$$\unstM x \xi = \{ y\in M \mid \limsup_{n\to -\infty} \dfrac 1 n \log d (f_\xi^n(x), f_\xi^n(y) )<0\}$$
 depend only on the past dynamics $\cocycle [\xi] [-n], n\ge 1$ and hence are $\F$-measurable.  
 Furthermore, since the family ${\mu_\xi}$ is assumed to be $\F$-measurable and since the locally-finite families $\{\mu^u_{(\xi,x)}\}$ are normalized using the affine parameters $H^u$, which are defined using only the past dynamics $\cocycle [\xi] [-n], n\ge 1$, it follows that 	$(\xi,x)\mapsto  \mu^u_{(\xi,x)}$ is $\F$-measurable.

\subsubsection{Orientation and trivialization}
Consider the measurable subbundle $\E^u\to X$ of the vector bundle $TX\to X$ whose fiber at $(\xi,x)$ is $E^u(\xi,x)$.  
By the $\F$-measurability of $(\xi,x)\mapsto E^u_\xi(x)$ we may  choose an $\F$-measurable assignment $(\xi,x)\mapsto v(\xi,x) \in E^u_\xi(x)\sm \{0\}$ with $\|v(\xi,x)\| = 1$. 
It follows that  $(\xi,x) \mapsto \left((\xi,x), v(\xi,x)\right)$ gives an $\F$-measurable orientation on $\E^u\subset T_xM$.  
We define  $\I \colon \E^u\to \R$
\begin{equation}
\I\colon\!\!\! \left((\xi,x), tv(\xi,x)\right) \mapsto   t.  \label{eq:trivialization}
\end{equation}

For $p\in X$, we write $\I_{p}\colon E^u(p)\to \R$ for the restriction of $\I$ to $E^u(p)$.  
We thus obtain a $\F$-measurable trivialization $\E^u\to X\times \R$
$$ \left(p, v\right)\mapsto \{p\} \times \I_p(v).$$

We also define a map from $X$ to the space of $C^1$ embeddings of $\R$ into $M$ by \begin{align}p \mapsto \left(t \mapsto (H^u_p)\inv \circ I_p\inv (t)\right).\label{eq:defofparam}\end{align}
Since the affine parameters $H^u_p$ defined on unstable manifolds depend only on the past dynamics $\cocycle [\xi] [-n], n\ge 1$, it follows that the map \eqref{eq:defofparam} is $\F$-measurable.  

We summarize the above.  
\begin{prop}\label{prop:transferable}
The geometric structures 
	$(\xi,x ) \mapsto E^u_\xi(x)$,
	$(\xi,x)\mapsto \unstM x \xi$, 
	$(\xi,x)\mapsto  \mu^u_{(\xi,x)}$, and \eqref{eq:defofparam}
are $\F$-measurable.

\end{prop}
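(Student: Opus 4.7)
The plan is to verify the four measurability claims one at a time, all traced to two inputs: the hypothesis that $\xi\mapsto \cocycle[\xi][-1]$ is $\hat\F$-measurable, and the fact that $\hat\F$ is increasing under $\theta$. A first step is to observe, by induction using $\theta\inv(\hat\F)\subset \hat\F$, that $\xi\mapsto \cocycle[\xi][-n]$ is $\hat\F$-measurable for every $n\ge 1$, so that $(\xi,x)\mapsto D_x\cocycle[\xi][-n]$ and $(\xi,x)\mapsto \cocycle[\xi][-n](x)$ are $\F$-measurable maps on $X=\Omega\times M$ for each $n\ge 1$.

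From this, the $\F$-measurability of $(\xi,x)\mapsto E^u_\xi(x)$ follows from the Oseledec characterization $E^u_\xi(x)=\{v:\tfrac{1}{n}\log\|D_x\cocycle[\xi][-n]v\|\to -\lambda^u\}$, a condition on countably many $\F$-measurable functions; equivalently, one realizes $E^u_\xi(x)$ in the projectivization of $T_xM$ as a $\F$-measurable limit. The global unstable manifold $\unstM x \xi$ is defined by a $\limsup$ condition on $\frac{1}{n}\log d(\cocycle[\xi][n](x),\cocycle[\xi][n](y))$ for $n\to-\infty$, which again involves only the backward cocycle, so $(\xi,x)\mapsto \unstM x \xi$ is $\F$-measurable. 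For the parametrization \eqref{eq:defofparam}, the unstable affine parameter $H^u_p$ on $\unstp p$ is constructed by the unstable analogue of \eqref{eq:rho}, an infinite product over the backward $F$-orbit of a Jacobian depending on $D\cocycle[\xi][-k]$; this is manifestly $\F$-measurable. Choosing a unit vector field $(\xi,x)\mapsto v(\xi,x)$ in $E^u$ uses only the $\F$-measurable line field $E^u$, so the trivialization $\I_p$ and the composition in \eqref{eq:defofparam} are $\F$-measurable.

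The principal obstacle, and the place requiring actual work, is the $\F$-measurability of $p\mapsto\mu^u_p$. For this I would fix the countable family of $u$-subordinate measurable partitions $\{\P_n\}$ of $X$ used in Section \ref{sec:condmeas} to define $\mu^u_p$. Because the atoms $\P_n(p)$ are chosen inside the already $\F$-measurable unstable manifolds, the partitions $\P_n$ themselves may be chosen $\F$-measurable. The family $p\mapsto \td\mu^{\P_n}_p$ is obtained by disintegrating $\mu$ along $\P_n$, and since $\xi\mapsto\mu_\xi$ is $\hat\F$-measurable by hypothesis while the atom $\P_n(p)\subset M_\xi$ depends $\F$-measurably on $p$, a standard measurable disintegration argument yields a $\F$-measurable version of $p\mapsto \td\mu^{\P_n}_p$. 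The locally finite measure $\mu^u_p$ was defined on each compact piece $K\subset \unstp p$ by the normalization $\restrict{\mu^u_p}{K}=(\td\mu^{\P_n}_p(\locunstp[1] p))\inv \td\mu^{\P_n}_p$; since $\locunstp[1] p$ depends $\F$-measurably on $p$, the normalization factor is $\F$-measurable, and patching these local normalizations over the countable family $\{\P_n\}$ gives the desired $\F$-measurability of $p\mapsto \mu^u_p$.
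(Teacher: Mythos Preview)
Your approach is correct and matches the paper's argument, which appears as the discussion immediately preceding the proposition rather than as a separate proof environment: the proposition is stated as a summary of that discussion. Your treatment of $\mu^u_p$ is in fact more careful than the paper's, which simply asserts that $\F$-measurability follows from the $\hat\F$-measurability of $\xi\mapsto\mu_\xi$ together with the fact that the normalization via $H^u$ uses only the past dynamics.

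One small slip: you write ``$\theta\inv(\hat\F)\subset \hat\F$'' for the increasing hypothesis, but the correct inclusion is the reverse, $\hat\F\subset\theta\inv(\hat\F)$ (equivalently $\theta(\hat\F)\subset\hat\F$). This is what makes $\theta\inv$ an $\hat\F$-measurable map, so that $\xi\mapsto \cocycle[\theta^{-n}(\xi)][-1]$ is $\hat\F$-measurable for each $n\ge 0$, and hence $\xi\mapsto\cocycle[\xi][-n]$ is $\hat\F$-measurable. The inclusion you wrote would instead make $\theta$ (not $\theta\inv$) measurable with respect to $\hat\F$, which is not what is needed here.
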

\def\nuhat{\hat \nu} 
\subsection{The family $\bar \mu_{(\xi,x)}$}
Using the affine parameters  $H^u_{(\xi,x)}\colon \unstM x \xi \to E^u _\xi (x)$ and the trivialization $\I\colon \E^u \to  \R$ we define a family of locally-finite  Borel  measures on $\R$ by 
\begin{equation}
\bar \mu_{(\xi,x)}:= \left(\I \circ H^u_{(\xi,x)}\right)_*\mu_{(\xi,x)} ^u. \label{eq:mubardefn}
\end{equation}
We equip the space of locally-finite  Borel  measures on $\R$  with its standard Borel structure (the dual topology to compactly supported continuous functions).
We thus obtain a measurable function from $X$ to the locally-finite  Borel  measures on $\R$. 
Since the family of measures $p\mapsto \mu_{p} ^u$ is $\Fol$-measurable, it follows that $$p\mapsto \bar \mu_p$$ is $\Fol$-measurable.  

The family $\{\bar \mu_p\}_{p\in X}$ will be our primary focus in the sequel.  In particular, the SRB property of $\mu$ will follow by showing that for $\mu$-\ae $p$, the measure  $\bar \mu_p$ is the Lebesgue measure on $\R$ (normalized on $[-1,1]$).

\section{The main lemma and proof of Theorem \ref{thm:skewproduct}.} \label{sec:MainLemma}
The primary technical tool used to the prove  Theorem \ref{thm:main2} is the following lemma.
Given two locally finite measures $\eta_1$ and $\eta_2$ on $\R$ we write $\eta_1 \simeq \eta_2$ if there is some $c>0$ with $\eta_1 = c\eta_2$.  

\begin{lemma}[Main Lemma]\label{lem:main}
Assume in Theorem \ref{thm:skewproduct} that $(\xi,x)\mapsto E^s_\xi(x)$ is {not} $\F$-measurable. 
Then there exist  constants $M>0$ and $1>\delta_0>0$ such that for every { sufficiently small} $\epsilon>0$ there exists a 
compact set $G_\epsilon\subset X$  
with 
$$\mu(G_\epsilon)\ge \delta_0$$ satisfying the following:
  
For any $q\in G_\epsilon$ there is an affine map 
	$$\psi\colon \R\to \R$$ 
	 with 
	\begin{enumerate}
	\item $\dfrac{1}{M}\le | D\psi|\le {M}$;
	\item $\dfrac{\epsilon}{M} \le |\psi(0)|\le M\epsilon$;
	\item $\psi_*\bar \mu_q\simeq\bar \mu_q$. 
	\end{enumerate}
\end{lemma}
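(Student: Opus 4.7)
The plan is to follow the exponential-drift strategy of Benoist--Quint and Eskin--Mirzakhani, adapted here by using the failure of $\F$-measurability of $E^s$ as the source of the drift. The geometric framework rests on the fact that, by Proposition \ref{prop:transferable}, the unstable manifolds $W^u(p)$, their affine parametrizations $H^u_p$, and the unstable conditional measures $\mu^u_p$ (hence $\bar\mu_p$) are all $\F$-measurable; only the stable direction $E^s$ carries information outside $\F$.

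First, I would convert the failure of $\F$-measurability into a quantitative pairing statement. Disintegrating $E^s$ over $\F$, the hypothesis gives a nondegenerate conditional distribution of $E^s$ on a positive-measure set; combined with Lusin's theorem and a standard measurable-selection argument, this yields on a positive-measure subset of a large Pesin block $\Lambda$ a measurable assignment $p \mapsto p'$ with $p'$ in the same $\F$-atom as $p$ and $\angle(E^s(p), E^s(p')) \ge \kappa_0$ for some fixed $\kappa_0 > 0$. Since $\F$-atoms are of the form $\hat\F(\xi) \times \{x\}$, such a pair shares the $M$-coordinate, and by Proposition \ref{prop:transferable} we have $W^u(p) = W^u(p')$, $H^u_p = H^u_{p'}$, and $\bar\mu_p = \bar\mu_{p'}$, while the stable directions are separated.

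The heart of the proof is the drift construction. Choose $\Lambda$ large enough that the constants in Propositions \ref{prop:tempered}, \ref{prop:growth} and Lemma \ref{lem:standardcrap} are uniform and the above pairing has a uniform transverse angle. Fix small $\epsilon > 0$ and a typical $q$. By Birkhoff, choose $n = n(q,\epsilon)$ with $q_* := F^{-n}(q) \in \Lambda$ and such that, after selecting a partner $q'_*$ of $q_*$ from the pairing and setting $q' := F^n(q'_*)$, the point $q'$ lies at distance comparable to $\epsilon$ from $q$; this is possible since the exponent $\lambda^u > 0$ drives the forward images of $q_*$ and $q'_*$ apart at rate $e^{k\lambda^u}$ once they cease to share the dynamics. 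Because $q_*$ and $q'_*$ share an $\F$-atom, $\bar\mu_{q_*} = \bar\mu_{q'_*}$. Pushing forward by $F^n$ and invoking the affineness of the change-of-affine-parameter maps (Proposition \ref{prop:Stabman}(3), transferred to unstable parameters) gives an affine identification of $\bar\mu_q$ with $\bar\mu_{q'}$. Composing with the stable-leaf holonomy from $W^u(q')$ onto $W^u(q)$ provided by the local product structure and Lemma \ref{lem:standardcrap}, which in the $H^u$-parameters is itself affine on account of the affineness of $H^u$ along asymptotic orbits, produces an affine map $\psi\colon \R \to \R$ with $\psi_*\bar\mu_q \simeq \bar\mu_q$. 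Choosing $n$ so that $d(q,q') \asymp \epsilon$ then yields $|\psi(0)| \asymp \epsilon$ and $|D\psi|$ of order one.

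The main obstacle is this third step, and in particular the exact affineness of $\psi$ together with the two-sided estimate on $|\psi(0)|$. The upper bounds on $|D\psi|$ and $|\psi(0)|$ follow readily from the Pesin uniformity on $\Lambda$ and the subexponential bounds of Proposition \ref{prop:tempered}. The lower bound $|\psi(0)| \gtrsim \epsilon$ is the quantitative heart of the drift: it demands that the persistent angle $\kappa_0$ between $E^s(q_*)$ and $E^s(q'_*)$ \emph{transport through} the $n$-step forward iterate to a genuinely transverse displacement at $q$ of size comparable to $\epsilon$, which in turn requires a careful synchronization between the scale $\epsilon$ and the choice of $n$. The exact affineness of $\psi$, as opposed to a merely $C^{1,1}$ map, requires synthesizing the cocycle identities for $H^u$ with the $\F$-measurability of $\mu^u$; I expect that producing a strictly affine $\psi$ (rather than an approximately affine one with errors tending to $0$ as $n \to \infty$) will involve passing to a subsequential weak limit of comparison maps, so that the non-affine error in the stable holonomy is absorbed.
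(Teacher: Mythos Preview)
Your outline has the right strategic ingredients (exponential drift, the non-$\F$-measurability of $E^s$ as the source, passing to limits to get exact affineness), but the core geometric mechanism is not the one that works, and the one you sketch has a genuine gap.

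Your pairing $q_*\mapsto q'_*$ in the same $\F$-atom produces two points with the \emph{same} $M$-coordinate $x$ but different base coordinates $\xi,\eta$ in the same $\hat\F$-atom. Their forward images $F^n(q_*)=(\theta^n\xi,f_\xi^n x)$ and $F^n(q'_*)=(\theta^n\eta,f_\eta^n x)$ live in \emph{different fibers}; there is no common stable lamination between $M_{\theta^n\xi}$ and $M_{\theta^n\eta}$, so your ``stable-leaf holonomy from $W^u(q')$ onto $W^u(q)$'' is undefined, and the separation of $f_\xi^n x$ from $f_\eta^n x$ is not along any unstable manifold of either. Thus you have not produced a point on $W^u(q)$ at distance $\asymp\epsilon$, and the relation $\psi_*\bar\mu_q\simeq\bar\mu_q$ does not follow.

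The mechanism in the paper is different and more elaborate. One fixes a single triple $(\xi,\eta,x)$ with $E^s_\xi(x)\neq E^s_\eta(x)$ and, for each large $j$, selects an auxiliary point $y_j\in W^s_\xi(x)$ at scale $r_j=\|DF^{-j}|_{E^s(\xi,x)}\|^{-1}$. Because the $\eta$-stable through $y_j$ is transverse to $W^s_\xi(x)$, it meets $W^u_\xi(x)=W^u_\eta(x)$ at a point $z_j$ with $|H^u_{(\xi,x)}(z_j)|\asymp r_j$; this elementary transversality (Lemma~\ref{lem:riemGeom}) is precisely how the angle between the two stable lines is converted into displacement along the common unstable. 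Two \emph{distinct} stopping times are then used: $\tau_{2,\epsilon}(j)$ pushes the $\eta$-orbit so that the image $w_j$ of $z_j$ sits at unstable distance $\asymp\epsilon$ from $p^2_j:=F^{\tau_{2,\epsilon}(j)}(\eta,x)$, while $\tau_{1,\epsilon}(j)$ pushes the $\xi$-orbit so that $p^1_j:=F^{\tau_{1,\epsilon}(j)}(\xi,x)$ and $q^1_j:=F^{\tau_{1,\epsilon}(j)}(\xi,y_j)$ both land in a Lusin set $K$ on which $\bar\mu$ varies continuously. The chain of comparisons is then purely linear: $\bar\mu_{p^2_j}$ and $\bar\mu_{p^1_j}$ (resp.\ $\bar\mu_{q^2_j}$ and $\bar\mu_{q^1_j}$) are related by the differentials $\lambda_{\alpha_j},\lambda_{\beta_j}$ via the $\F$-measurability $\bar\mu_{(\xi,x)}=\bar\mu_{(\eta,x)}$, $\bar\mu_{(\xi,y_j)}=\bar\mu_{(\eta,y_j)}$; and $\bar\mu_{p^1_j}\approx\bar\mu_{q^1_j}$ by \emph{continuity on $K$}, not by any holonomy. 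Passing to accumulation points $p=\lim p^2_j$, $q=\lim q^2_j\in W^u(p)$ along a subsequence yields the exact affine $\psi$. Finally, $G_\epsilon$ is not constructed pointwise: it is the $\omega$-limit set of the single sequence $\{p^2_j\}_{j\in\good}$, and $\mu(G_\epsilon)\ge\delta_0$ follows from a density argument (Lemma~\ref{lem:measureofaccumulation}) after controlling the density of good times via the quasi-isometry of the stopping times (Lemma~\ref{lem:jointreturns}).
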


The proof of Lemma \ref{lem:main} occupies Sections \ref{sec:lemmas} and \ref{sec:lemmaproof}.  We finish this section with the proof of Theorem \ref{thm:skewproductABS} assuming Lemma \ref{lem:main}.
Set $$G:= \{ q\in X \mid q\in G_{1/N} \ \text{for\ infinitely\ many $N\in\N$}\}.$$
Under the hypotheses of Lemma \ref{lem:main} we have
 $\mu(G)\ge \delta_0$. 

\subsection{Proof of Theorem \ref{thm:skewproductABS}}
Theorem \ref{thm:skewproductABS} follows from Lemma \ref{lem:main} by  standard arguments.  
We sketch these  below and referring to \cite{MR2261075} for more details.  
\def\Aff{\mathcal{A}}
 
 \begin{lemma}\label{lem:translation1}
Under the hypotheses of Lemma \ref{lem:main}, for \ae $p\in X$, $\bar \mu_p$ is invariant under the group of translations.  In particular, for \ae $p\in X$, $\bar \mu_p$ is the Lebesgue measure on $\R$ normalized on $[-1,1]$.  
\end{lemma}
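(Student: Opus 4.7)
The plan is to apply Lemma \ref{lem:main} along a sequence $\epsilon_n \to 0$, set $G := \bigcap_N \bigcup_{n \geq N} G_{\epsilon_n}$, and invoke Fatou's lemma to obtain $\mu(G) \geq \delta_0 > 0$. For each $p \in G$ one obtains a sequence of affine maps $\psi_n(x) = a_n x + b_n$ with $|a_n| \in [1/M, M]$, $b_n \neq 0$, $|b_n| \to 0$, each belonging to the closed up-to-scalar stabilizer
\[
H(\bar \mu_p) := \{\psi \in \mathrm{Aff}(\R) : \psi_* \bar \mu_p \simeq \bar \mu_p\}.
\]
After extracting a subsequence so $a_n \to a_\infty$, the dilation $\psi_\infty(x) = a_\infty x$ also lies in $H(\bar \mu_p)$, and $T_n := \psi_n \circ \psi_\infty^{-1}$ converges to the identity without being identically it; hence $H(\bar \mu_p)$ is non-discrete, and its identity component is a positive-dimensional closed connected subgroup of $\mathrm{Aff}(\R)$.

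The next step is to show this identity component contains the translation subgroup. The positive-dimensional connected subgroups of $\mathrm{Aff}(\R)$ are, up to conjugation, the translation group, a dilation group about some point, or the full affine group. In the generic case $a_\infty \neq 1$ one notes that the commutator $[\psi_\infty, \psi_n](x) = x + (a_\infty - 1) b_n$ lies in $H(\bar \mu_p)$ and produces nonzero translations tending to zero, so the translation subgroup is in $H(\bar \mu_p)$. The remaining case requires ruling out the one-parameter dilation subgroup about some point of affine coordinates. Since $\bar \mu_p$ is non-atomic (a consequence of $h_\mu(F \mid \pi) > 0$), dilation invariance up to scalar would force $\bar \mu_p \propto |x + c(p)|^{\beta(p)}\, dx$ for measurable $c, \beta$. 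Using the $F$-equivariance of $\bar\mu$ under the linear fiber dynamics on affine parameters (multiplication by the unstable expansion factor $a(p)$), one derives $c(F(p)) = c(p)/a(p)$; since $\prod_{k<n} a(F^k p)$ grows like $e^{n \lambda^u}$ by Oseledec, $c \circ F^n \to 0$ pointwise, and $F$-invariance of the distribution of $c$ under $\mu$ forces $c \equiv 0$. That is, the dilation center must coincide with $p$ itself. Picking a second point $p' \in G \cap W^u(p)$ with $p' \neq p$ (which exists for $\mu$-a.e.\ $p \in G$ by Fubini applied to the non-atomic $\mu^u_p$) and comparing $\bar \mu_p$ with $\bar \mu_{p'}$ via the affine change of affine-parameter coordinates from (the unstable analogue of) Proposition \ref{prop:Stabman}(3), the singularities at two distinct ``centers'' are incompatible unless $\beta = 0$---reducing back to Lebesgue.

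In every case, the translation subgroup lies inside $H(\bar \mu_p)^0$, so $b \mapsto c_p(b)$ defined by $(\tau_b)_* \bar \mu_p = c_p(b)\, \bar \mu_p$ is a continuous multiplicative homomorphism $\R \to \R_{>0}$, giving $c_p(b) = e^{\lambda(p) b}$ and $\bar \mu_p \propto e^{\lambda(p) x}\, dx$ for some measurable $\lambda \colon X \to \R$. The same $F$-covariance argument yields $\lambda(F(p)) = \lambda(p)/a(p)$, hence $\lambda \circ F^n \to 0$ pointwise; $F$-invariance of the distribution of $\lambda$ under $\mu$ then forces $\lambda \equiv 0$ a.e. Consequently $\bar \mu_p \propto dx$ for $\mu$-a.e.\ $p$, and the normalization $\bar \mu_p((-1,1)) = 1$ identifies $\bar \mu_p$ as the Lebesgue measure rescaled on $[-1,1]$---in particular, translation invariant.

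The main obstacle is the dilation-only subcase: pinning the center of dilation to $p$ requires the $F$-equivariance/Oseledec argument above, and then eliminating the center altogether requires a careful geometric comparison of the conditional measure viewed in affine parameters based at two different points on a single unstable manifold. Once the translation subgroup is known to lie in the stabilizer, the final reduction via the Birkhoff-type argument on $\lambda$ is routine.
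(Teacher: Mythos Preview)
Your overall strategy matches the paper's: pass to $G=\limsup G_{\epsilon_n}$, show $H(\bar\mu_p)$ is non-discrete for $p\in G$, classify the identity component, rule out the pure-dilation case, and then upgrade translation quasi-invariance to genuine invariance. The divergence is in how you handle the dilation-only subcase, where you take a longer route and make a sign slip.

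The equivariance is backwards: pushing $|x+c|^\beta\,dx$ forward by $x\mapsto a(p)x$ yields a density proportional to $|y+a(p)c|^\beta$, so $c(F(p))=a(p)\,c(p)$ and $|c\circ F^n|\to\infty$ on $\{c\neq 0\}$, not $\to 0$. The conclusion $c\equiv 0$ a.e.\ still follows (via $F$-invariance of the law of $c$ and Poincar\'e recurrence), so the error is ultimately harmless, but the reasoning as written is incorrect.

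More to the point, the comparison with a second point $p'\in W^u(p)$ is unnecessary. The paper observes directly that for $p\in G$ the dilation center cannot be $0$: since $\psi_j\to\mathrm{id}$, eventually $\psi_j\in H(\bar\mu_p)^0$, and if $H(\bar\mu_p)^0=\{t\mapsto t_0+\gamma^s(t-t_0)\}$ then $\psi_j(0)=t_0(1-\gamma^{s_j})$, so $\psi_j(0)\neq 0$ forces $t_0\neq 0$. Combined with $|t_0(F^n(p))|=\|DF^n|_{E^u(p)}\|\cdot|t_0(p)|\to\infty$, Poincar\'e recurrence gives an immediate contradiction on the positive-measure set $G$. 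Your route---first forcing the center to $0$, then comparing $|x|^\beta$-densities at two base points---reaches the same endpoint but requires care you have not supplied: the general dilation-quasi-invariant non-atomic measure is $c_+1_{x>0}|x|^\beta\,dx+c_-1_{x<0}|x|^\beta\,dx$ with possibly $c_+\neq c_-$, and you need the $|x|^{\beta}$ form at $\mu^u_p$-a.e.\ $p'$, not merely at some $p'\in G$.

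Your final step, deducing $\lambda\equiv 0$ from $\lambda(F(p))=\lambda(p)/a(p)$, is correct (here the sign is right) and is exactly the paper's argument, which is phrased there via $c_p(s)=\bar\mu_p([-s-1,-s+1])$ and the sets $B_{r,\epsilon}$ rather than the explicit exponential density.
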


\begin{proof} 
\def\AffR{\mathrm{Aff}(\R)}
Let $\AffR$ denote the group of invertible affine transformations of $\R$.  
For $p\in X$, let $\Aff(p)\subset \AffR$ be the group of affine transformations $\psi\colon \R\to \R$ with $$\psi_*\bar \mu_p \simeq \bar\mu_p.$$  
We have  that $\Aff(p)$ is a closed subgroup of $\AffR$.  (See the proof of \cite[Lemma 3.10]{MR2261075}.)
By Lemma \ref{lem:main}, for $p\in G$, $\Aff(p)$ contains elements of the form $t\mapsto \lambda_j t+ v_j$ with $|v_j|\to 0$ as $j\to \infty$ and $\lambda_j\in \R$ such that  $|\lambda_j|$ is  uniformly bounded away from $0$ and $\infty$.  
Then, for $p\in G$,  $\Aff(p)$ contains at least one map of the form 
	$$t\mapsto \lambda t$$ 
for some accumulation point $\lambda$ of $\{\lambda_j\}\subset \R$.
We  may thus find  a subsequence of $$\{t\mapsto \lambda \inv\lambda_j t+ v_j\}$$ converging to the identity in $\Aff(p)$.  It follows that $\Aff(p)$ is not discrete.  In particular, for every $p\in G$ the group $\Aff(p)$ contains a one-parameter subgroup of $\AffR$.

\def\Cc{\mathcal C}
For $p\in X$ denote by $\Cc_p \colon \R \to \R$ the map $$\Cc_p = \I_{F(p)}\circ DF_p \circ \I\inv _p$$ where $\I_p$ denotes the trivialization \eqref{eq:trivialization}.
 Noting that $(\Cc_p)_*\bar\mu_p \simeq \bar\mu_{F(p)}$ we have that $$\Aff(F(p)) = \Cc_p\Aff(p) \Cc_p\inv.$$
Let $\Aff_0(p)\subset \Aff(p)$ denote the identity component of $\Aff(p)$. 
Then $\A_0(F(p)) $ is isomorphic to $\A_0(p)$ for \ae $p\in X$.  
Since $\mu(G)>0$, it follows by ergodicity that $\A_0(p)$ contains a one-parameter subgroup for \ae $p\in X$.  

The one-parameter subgroups of $\AffR$ are either pure translations or are conjugate to scaling.  We show that $\Aff(p)$ contains the group of translations for \ae $p\in X$.  
Suppose for purposes of contradiction that $\Aff_0(p)$ were conjugate to scaling for a positive measure set of $p\in X$. By ergodicity, it  follows that $\Aff_0(p)$ is conjugate to scaling for \ae $p\in X$.  
For such $p$, there are $t_0\in \R, \gamma\in \R_+$ with 
$$\Aff_0(p) = \{t\mapsto t_0 + \gamma^s(t-t_0)\mid s\in \R\}.$$
In particular, for such $p$ the action of $\Aff_0(p)$ on $\R$  contains a unique fixed point $t_0(p)$. 

For $p\in G$ the fixed point $t_0(p)$ is non-zero since, as observed above, there are $\psi\in \Aff(p)$ arbitrarily close to the identity with $\psi(0) \neq 0$.  
Furthermore, writing $\psi\colon t \mapsto t_0(p) + \gamma^s(t-t_0(p))$ we have 
$$\Cc _p \circ \psi\circ \Cc_p\inv = \pm\|\restrict{ DF^n}{E^u(p)}\|t_0(p) + \gamma^s\left(t-\pm\|\restrict{ DF^n}{E^u(p)}\|t_0(p)\right)$$ 
where the sign depends on whether or not $\Cc _p\colon \R \to \R$ preserves orientation.  
It follows for $p\in G$ that $|t_0(F^n(p)) |= \|\restrict{ DF^n}{E^u(p)}\|  \ |t_0(p)|$ becomes arbitrarily large, contradicting Poincar\'e recurrence.

Therefore, for almost every $p\in X$, the group $\Aff(p) $ contains the group of translations.  
We finish the proof by showing that for such $p$, the measure $\bar \mu_p$ is \emph{invariant} under the group of translations. 
 For $s\in \R$ define $T_s\colon \R \to \R$ by $T_s\colon t \mapsto t+s$
 and 
define $c_p\colon \R \to \R$ by 
$c_p(s) = \bar \mu _p([-s-1,-s+1])$.  Then 
$$\frac{d(T_s)_* \bar\mu_p} { d\bar \mu_p} = c_p(s) .$$  Since (by the positive entropy hypothesis) $\bar\mu_p$ contains no atoms, we have that $c_p\colon \R \to \R$ is continuous.

Note that 
$$\Cc_p \circ T_s \circ \Cc_p\inv = T_{ \pm \|\restrict{ DF}{E^u(p)}\| s}$$ and for $n\in \Z$
\begin{equation}c_p(s) = c_{F^n(p)}\left( \pm  \|\restrict{ DF^n}{E^u(p)}\| s \right)\label{eq:bad}\end{equation} where the signs depend on whether or not $DF$ or $DF^n$ preserves the orientation on $\E^u.$
Define the set $$B_{r,\epsilon}:= \{p\in X: |c_p(t)-1|<\epsilon \text{  for all } |t|<r\}.$$  For each $\epsilon>0$ pick $r$ so that $\mu(B_{r,\epsilon})>0$.  Applying \eqref{eq:bad} for $n\to -\infty$, we violate Poincar\'e recurrence with respect to the set $B_{r, \epsilon}$ unless  $|c_p(t)-1|<\epsilon$ for all $t$ and \ae $p\in X$.  
Taking $\epsilon\to 0$  shows that $c_p(s)= 1$ for all $s\in \R $  and \ae $p\in X$ completing the proof of the lemma.
\end{proof}

Theorem \ref{thm:skewproduct} now follows as an immediate corollary of Lemmas \ref{lem:main} and  \ref{lem:translation1}.

\section{Proof of Lemma \ref{lem:main}: Preparatory Lemma}\label{sec:lemmas}
We begin with a number of constructions and technical lemmas that will be used in the proof of Theorem \ref{thm:main2}.
For Sections \ref{sec:lemmas} and \ref{sec:lemmaproof} we write $$\{\hat \nu_\xi\}_{\xi \in \Omega}$$ for the family of conditional probabilities  induced by (a partition into atoms of) $\hat \Fol$.  
We also write $X_0$ for the full $\mu$-measure, $F$-invariant subset of $\Omega_0\times M$ where all propositions from Section \ref{sec:BG} hold and such that 
the stable and unstable manifolds, Lyapunov norms, affine parameters, and the trivialization $\I$ are defined.  We further assume that for $p=(\xi,x) \in X_0$ the  measures $\mu_\xi$, $\mu^u_p$, $\mu^s_p$, and  $\bar\mu_p$ are defined, non-atomic, and satisfy
$F_*\mu_\xi = \mu_{\theta(\xi)}$, 
$F_*\mu^{u/s}_p \simeq \mu^{u/s}_{F(p)}$, and $\I_{F(p)}\circ DF_p \circ \I_p\inv(  \bar\mu_p) \simeq \bar \mu_{F(p)}$.  Finally, we assume for $p\in X_0$ that $\bar \mu_p$ contains $0$ in its support.

\subsection{Dichotomy for invariant subspaces}
We establish the following dichotomy for $DF$-invariant subbundles of $TX$. 
Let $F\colon X\to X$  and $\mu$ be as in Theorem \ref{thm:skewproductABS}.  
\def\V{\mathcal V}
Consider a $\mu$-measurable line field $\V\subset TX$.  Write $V_\xi(x)\subset T_xM$ for the family of subspaces with $$\V(\xi,x) = (\xi, (x,  V_\xi(x)) ).$$   
The measurability of $\V$ with respect to a sub-$\sigma$-algebra of $X$ is the measurability of the function $(\xi,x)\mapsto V_\xi(x)$ with the standard Borel structure on $TM$.
We say  $\V$ is \emph{$DF$-invariant} if for $\mu$-\ae $(\xi,x)\in X$
\[DF_{(\xi,x)} \V(\xi,x) = \V(F(\xi,x))\text {\ or \ }   D_xf_\xi V_\xi(x) = V_{\theta(\xi)}( f_\xi(x)).\]
		
Recall that 
$\F$  in Theorem \ref{thm:skewproductABS} is an increasing sub-$\sigma$-algebras; that is,   $F(\F) \subset \F$ 
We write $\F_{\infty}$ 
for the smallest $\sigma$-algebra containing $\bigcup_{n\ge 0}F^{-n}(\F)$.  We  similarly define  $\hat \F_{\infty}$.  
(We remark that in the case that $\hat \Fol$ is the $\sigma$-algebra of local unstable sets in Section \ref{sec:skewreint}, $\hat\Fol_\infty$ and $\Fol_\infty$ are, respectively,  the completions of the Borel algebras on $\Sigma$ and $\Sigma\times M$.) 
		
\begin{lemma}\label{lem:VFdichot}
		Let $\mu$ and $\scrF$ be as in Theorem \ref{thm:skewproductABS}.  Then 
	\begin{enumerate}[label=({\arabic*}), font=\normalfont]
		\item \label{dich1}  the line field $(\xi,x)\mapsto E_\xi^s(x)$ is $\F_\infty$-measurable;
		\item \label{dich2}  
		for any $DF$-invariant, $\F_{\infty}$-measurable line field   $\V\subset TX$ either 
${(\xi,x) \mapsto V_\xi(x)}$  is $\F$-measurable, or 
			\begin{equation}\label{eq:VFdichotomy}  \text{for $\nu$-\ae $\xi$, $\hat \nu_\xi$-\ae $\eta$, and $\mu_\xi$-\ae $x$,  \( V_\xi(x) \neq V_\eta(x).\)}\end{equation}
		\end{enumerate}
\end{lemma}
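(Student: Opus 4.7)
For part (1), the stable subspace admits the characterization
\[ E^s_\xi(x) = \left\{v \in T_xM : \limsup_{n\to\infty} \tfrac{1}{n}\log\|D_xf^n_\xi v\| < 0\right\}, \]
so it is measurable with respect to the $\sigma$-algebra generated by the family $(\xi,x) \mapsto D_xf^n_\xi$ for $n \ge 0$. The hypothesis that $\xi \mapsto \cocycle[\xi][-1] = (f_{\theta^{-1}\xi})^{-1}$ is $\hat\F$-measurable, after substituting $\theta^{k+1}(\xi)$ for $\xi$, gives that $\xi \mapsto f_{\theta^k\xi}$ is $\theta^{-k-1}(\hat\F)$-measurable, hence $\hat\F_\infty$-measurable, for every $k\ge 0$. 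Consequently $(\xi,x)\mapsto D_xf^n_\xi$ is $\hat\F_\infty\otimes\B_M$-measurable for each $n\ge 0$, and the displayed formula exhibits $E^s$ as $\hat\F_\infty \otimes \B_M$-measurable. I conclude by checking the inclusion $\hat\F_\infty \otimes \B_M \subseteq \F_\infty$: for $A'\in\hat\F$ and $B\in\B_M$, one has $\theta^{-n}(A')\times B = F^{-n}(A'\times M)\cap(\Omega\times B) \in \F_\infty$.

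For part (2), I analyze the conditional distribution of $V_\eta(x)$ across $\hat\F$-atoms. For $\nu$-a.e.\ $\xi$ with $\hat\F$-atom $\alpha$ and each $x\in M$, let $P_{\alpha,x}$ denote the push-forward of $\hat\nu_\alpha$ under $\eta\mapsto V_\eta(x)\in\mathbb{P}(T_xM)$; this is a well-defined, $\F$-measurable family of Borel probability measures (using the hypothesis that $\xi \mapsto \mu_\xi$ is $\hat\F$-measurable to make the pair $(\alpha,x)$ well-defined). Define $q(\alpha,x):=\sup_\ell P_{\alpha,x}(\{\ell\})$, the largest atom mass, which is an $\F$-measurable $[0,1]$-valued function. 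The $\F$-measurability of $\V$ is equivalent to $q \equiv 1$ almost surely, while conclusion \eqref{eq:VFdichotomy} is equivalent to $P_{\alpha,x}$ being non-atomic almost surely, i.e.\ $q \equiv 0$. The content of (2) is therefore the dichotomy $q \in \{0,1\}$ $\mu$-a.e.

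To rule out intermediate values of $q$, I use $DF$-invariance. Suppose $q>0$ on a set of positive $\mu$-measure. Then the $\F$-measurable finite set $L(\alpha,x)\subset \mathbb{P}(T_xM)$ consisting of the atoms of $P_{\alpha,x}$ of mass $q(\alpha,x)$ defines a finite-valued measurable ``multi-line field.'' The plan is to show that $L$ is $DF$-equivariant in the sense $D_xf_\xi L(\alpha,x) \subseteq L(\beta_\xi, f_\xi x)$ for $\hat\nu_\alpha$-a.e. $\xi$, where $\beta_\xi$ denotes the $\hat\F$-atom of $\theta\xi$, and then to conclude via ergodicity of $(F,\mu)$ that the cardinality of $L$ is a constant $k$ a.s. If $k=1$ then $L$ is an $\F$-measurable $DF$-invariant line field that must equal $\V$ a.s., whence $q \equiv 1$; if $k \ge 2$, the multifield produces a nontrivial $\Z/k$-cover of $(X,\mu)$ on which $\V$ lifts to a single section, forcing again $q \equiv 1$ on each sheet. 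I expect the main technical obstacle to be precisely the verification of this $DF$-equivariance: since $\hat\F$ is only strictly $\theta$-increasing, $\theta$ does not send the atom $\alpha$ into a single atom $\beta$ but into an $\hat\F$-measurable union of atoms $\{\beta_\xi\}_{\xi\in\alpha}$ depending on the representative, so the transformation law of $P_{\alpha,x}$ under $F$ involves the disintegration of $\hat\nu_\alpha$ over the refinement $\theta^{-1}(\hat\F)$. Carefully tracking this disintegration and combining it with $V_{\theta\xi}(f_\xi x)= D_xf_\xi V_\xi(x)$ is the delicate step; once it is in hand, the desired zero-one conclusion for $q$ is standard.
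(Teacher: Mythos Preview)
Your argument for part~(1) is correct and matches the paper's.

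For part~(2), however, the approach via the maximal-atom set $L(\alpha,x)$ has a genuine gap, and in fact the claimed equivariance $D_xf_\xi\, L(\alpha,x) \subseteq L(\beta_\xi, f_\xi x)$ is false in general. You correctly identify that $\theta$ refines $\hat\F$-atoms, so that $\alpha$ decomposes into $\theta^{-1}(\hat\F)$-atoms $\alpha_\xi$, and on each such piece $f_\eta$ is constant (equal to $f_\xi$) and $\theta$ carries $\hat\nu_\alpha|_{\alpha_\xi}$ to $\hat\nu_{\beta_\xi}$. The transformation law is therefore
\[
P_{\beta_\xi,\,f_\xi x}=(D_xf_\xi)_*\bigl[(\eta\mapsto V_\eta(x))_*(\hat\nu_\alpha\text{ conditioned on }\alpha_\xi)\bigr],
\]
which is the push-forward of the \emph{refined} distribution on the finer atom $\alpha_\xi$, not of $P_{\alpha,x}$ itself. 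Refinement can create or destroy atoms and reshuffle which atoms are maximal, so there is no reason for a maximal atom of $P_{\alpha,x}$ to remain maximal (or even to remain an atom) after conditioning on $\alpha_\xi$. Consequently neither $q$ nor $L$ behaves well under $F$, and the rest of the plan (constancy of $|L|$, the $\Z/k$-cover) never gets off the ground. Even in the case $k=1$ you would still need an extra argument to pass from ``$L$ is an $\F$-measurable invariant line'' to ``$L=\V$ a.s.''

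The paper avoids this by tracking a different quantity: not the mass of the \emph{largest} atom of $P_{\alpha,x}$, but the mass of the \emph{specific} atom $V_\xi(x)$, namely
\[
\Phi_0(\xi,x):=\hat\nu_\xi\{\eta: V_\eta(x)=V_\xi(x)\}=P_{\alpha,x}(\{V_\xi(x)\}).
\]
This function is not $\F$-measurable, but it satisfies the clean identity $\Phi_0\circ F^n=\Phi_n$, where $\Phi_n$ is the same quantity computed with the refined algebra $\F_n=F^{-n}(\F)$; the identity holds precisely because $f^n_\eta$ is constant on $\Q_n$-atoms and $\V$ is $DF$-invariant, so the level set $\{V_\eta(x)=V_\xi(x)\}$ intersected with the $\Q_n$-atom is carried by $F^n$ to the corresponding level set at $F^n(\xi,x)$. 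For fixed $(\xi,x)$ the sequence $\eta\mapsto \Phi_n(\eta,x)$ is a martingale with respect to the filtration $(\hat\F_n)$ on $(\Omega,\hat\nu_\xi)$, and $\F_\infty$-measurability of $\V$ forces it to converge to the indicator $1_{\{V_\eta(x)=V_\xi(x)\}}$. Hence $\Phi_0\circ F^n\to 1$ on the (positive-measure, $F$-invariant) set where $\Phi_0>0$, and ergodicity plus Poincar\'e recurrence gives $\Phi_0\equiv 1$ a.e., which is exactly $\F$-measurability of $\V$. The point is that refinement of conditional measures is handled automatically by the martingale convergence theorem once you track the mass at the \emph{moving} point $V_\xi(x)$ rather than at a fixed $\F$-measurable target.
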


\begin{proof}
To see \ref{dich1} we recall that $\xi\to \cocycle    [\xi][-n]$ is $\hat\Fol$-measurable for all $n\ge1$.  Then 
$$  \xi\mapsto \cocycle = \left(\cocycle[\theta^n(\xi)][{-n}]\right) \inv$$
is $\theta^{-n} (\hat \F)$-measurable.  It follows that $\xi\to \cocycle $ is $\hat \F_\infty$-measurable for all $n\ge 0$.  
Since $E^s_\xi(x)$ depends only on $\cocycle$ for $n\ge 0$, we have  $$(\xi,x)\mapsto E^s_\xi(x) = \left\{v\in T_xM \mid  \lim _{n\to \infty} \dfrac 1 n |D\cocycle  (v)| <0\right\}$$ is $\F_\infty$-measurable.  

To prove \ref{dich2} 
we introduce the following objects.
\def\Qpart{\mathcal Q}
\begin{itemize}
\item Let $\P$ denote the measurable partition of $X$ into level sets of $(\xi,x) \mapsto V_\xi(x)$.
\item Let $\Qpart$ denote a measurable partition of $X$ into atoms of $\Fol$.  
\item Let $\mu^\Qpart_{(\xi,x)}$ denote a family of conditional probabilities of $\mu$ with respect to $\Qpart$
\end{itemize}

First observe that the $\hat \F$-measurability of $\{\mu_\xi\}$ implies that  
conditional measures $\mu^\Qpart_{(\xi,x)}$ are lifts of the associated conditional measures on $\Omega$:
\[\ d \mu^\Qpart_{(\xi,x)}(\cdot, x ) = \ d \nuhat_\xi(\cdot)\]
and that we can exchange quantifiers in the second conclusion of the lemma: \eqref{eq:VFdichotomy} holds if and only if 
		\begin{align}\label{eq:olive} 
\text{for $\nu$-a.e.\ $\xi$, $\mu_\xi$-a.e.\ $x$, and  $\hat \nu_\xi$-a.e.\ $\eta$, 
\(V_\xi(x) \neq V_\eta(x).\)}
\end{align}

We assume \eqref{eq:olive} fails; that is, we assume 
\begin{equation}\begin{aligned}\label{eq:treeness}
	\mu\bigg\{(\xi,x) \mid \mu^\Qpart_{(\xi,x)}& (\P(\xi,x))  > 0\bigg\}= \mu\bigg\{(\xi,x) \mid \hat \nu_\xi\big\{ \eta \mid V_\xi(x) = V_\eta(x) \big\} > 0\bigg\}>0.  
\end{aligned}\end{equation}
From \eqref{eq:treeness} we will deduce $\F$-measurability of $(\xi,x)\mapsto V_\xi(x)$.

\def\meashere{\mu _{(\xi,x)}^\Qpart}
\def\measheren{\mu _{(\xi,x)}^{\Qpart_n}}

Let 
	\[\Fol_{n}: = F^{-n} (\F)\]
and write  $\Qpart_n= F^{-n}(\Qpart)$ for the partition of $X$ into atoms of $\F_n$ with corresponding family of conditional measure $\{\measheren\}$.  

For each $(\xi,x)\in X$ define 
\[ \Phi_n(\xi,x) := 
\mu _{(\xi,x)}^{\Qpart_n}(\P(\xi, x)).
\]
  We have almost surely equivalent expressions
\[ \Phi_n(\xi,x) =  \Ex_{\mu _{(\xi,x)}^\Qpart}(1_{\P (\xi, x)}(\cdot)\mid \F_{n})(\xi,x) = \Ex_{\nuhat_\xi}(1_{\P (\xi, x)}(\cdot, x)\mid \hat \F_{n})(\xi). \] 
Consider any $(\xi,x)$ with $ \mu^\Qpart_{(\xi,x)} (\P(\xi,x))>0$ and such that  $\V$ is $\Fol_\infty$-measurable $\mod \mu^\Qpart_{(\xi,x)} $. 
For $\eta\in \Omega$ define $$\Psi_n(\eta) := 
 \Ex_{\nuhat_\xi}(1_{\P (\xi, x)}(\cdot, x)\mid \hat \F_{n})(\eta)
.$$
Then $\Psi_n(\eta) $ is a martingale (with filtration $\hat \F_n$ on the measure space $(\Omega, \B_\Omega, \nuhat_\xi)$) 
whence (using  the $\Fol_\infty$-measurability of $\V$)
$$\Psi_n(\eta)\to \Ex_{\nuhat_\xi}(1_{\P (\xi, x)}(\cdot, x)\mid \hat \F_{\infty})(\eta) = 1_{\P (\xi, x)}(\eta,x)$$ $ \hat\nu_\xi$-\as as $n\to \infty$.  
In particular, for $\meashere$-\ae $(\eta,x) \in \P(\xi,x)$
$$\Phi_n(\eta,x) \to 1$$ as $ n\to \infty.$
It follows from  \eqref{eq:treeness}  that 
\begin{equation}\label{eq:downlow}\mu\left\{ (\xi,x) \in X \mid \Phi_n(\xi,x) \mapsto 1 \text{\ as\ } n\to \infty\right\}>0.\end{equation}

The $\F$-measurability of $(\xi,x)\mapsto V_\xi(x)$ is equivalent to the assertion that 
\[\mu\{(\xi,x)\mid \Phi _0(\xi,x) = 1\} = 1.\]  
We claim that $\Phi_0(F^n(\xi, x)) = \Phi_n(\xi,x).$
Indeed, we have $F^n( \Q_n(\xi,x)) =  \Q(F^n(\xi,x))$ whence 
$$F^n_*(\mu _{(\xi,x)}^{\Qpart_n}) = \mu _{F^n(\xi,x)}^{\Qpart}$$ and
$$F^n(\P(\xi,n)\cap \Q_n(\xi,x)) = \P(F^n(\xi,n))\cap \Q(\xi,x))$$
almost surely by the  $\F_n$-measurability of  $(\xi,x) \mapsto D_x\cocycle$  and  $DF$-invariance of $\V$. 
The ergodicity and $F$-invariance of $\mu$, the identity $\Phi_0(F^n(\xi, x)) = \Phi_n(\xi,x)$, and \eqref{eq:downlow} together imply that $\Phi_0\equiv 1$ on a set of full measure completing the proof.
\end{proof}

\subsection{Definition and bounds on the measure of recurrence sets.}

\def\rec{\mathcal R}
Consider $\phi\colon X\to \R$  an integrable function.  We write $\rec(\phi)$ for the set of points
 \[\rec(\phi) := \left\{q\in X\mid \lim _{n\to \pm \infty}\dfrac{1}{|n|+1}\sum_{k=0}^{n} \phi(F^k(q)) = \int \phi \ d \mu\right \}.\]
For a measurable set $B\subset (X)$ write \[\rec (B) := \rec(1_B).\]
Note that for any integrable $\phi$, the pointwise ergodic theorem implies  $\mu(\rec(\phi)) = 1$.   

For a subset $A\subset \N$, or more generally $A\subset \Z$, we define its
\begin{enumerate}
\item  \emph{upper density} \(\displaystyle \overline d(A):= \limsup_{t\to \infty} \dfrac {\#(A\cap [0, t])}{t},\)
\item \emph{lower density} \(\displaystyle \underline d(A):= \liminf_{t\to \infty} \dfrac {\#(A\cap [0, t])}{t}\), and 
\item \emph{density} \(\displaystyle d(A):= \lim_{t\to \infty} \dfrac {\#(A\cap [0,t])}{t}\) whenever the limit exists.
\end{enumerate}
We observe that each limit above (if defined) is independent of taking $t\to \infty$ in $\Z$ or $\R$ and is not changed if finitely many elements of $A$ are omitted.  
For measurable $B\subset X$ we have
$$x\in \rec (B) \iff  d(\{n\in \N \mid f^n(x) \in B\})=d(\{n\in \N \mid f^{-n}(x) \in B\})=  \mu(B).$$

\def\dens{\mathcal D}
\subsubsection{Sets with good recurrence along the stable foliation} 
We have the following lemma.
\begin{lemma}\label{lem:rec1}
For any  $\rho>0$ and   $\delta>0$ there exists an $r_0>0$ with the following property: For any measurable $B\subset X$ with $\mu(B)>1-\delta$ 
we write
\begin{align}\label{eq:1stdensity}
\dens^1_n(x,\xi, B):= \inf_{ 0<r\le r_0}\left\{ \dfrac{\scond{ \locstab x \xi    \cap F^{-n}(B)}}{\scond {\locstab x \xi}}\right\}.\end{align}
Then for $\mu$-a.e. $(\xi,x)$
	\begin{align}\label{eq:conclusion1}\underbar d\left(\left\{
	n\in \N \mid \dens^1_n(x,\xi,B) >1-\rho
	\right\} \right) > 1-\delta.\end{align}
\end{lemma}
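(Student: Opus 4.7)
My plan is to combine Lebesgue differentiation along stable leaves with Birkhoff's ergodic theorem, exploiting the exponential contraction of $F^n$ on local stable manifolds.

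Fix $B$ with $\mu(B)>1-\delta$. First, I would observe that a measurable partition subordinate to the stable foliation disintegrates $\mu$ with conditionals proportional to $\{\mu^s_p\}$, and that each $(H^s_p)_*\mu^s_p$ is a locally finite Borel measure on $\R$. The one-dimensional Lebesgue--Besicovitch differentiation theorem then yields, for $\mu$-a.e.\ $p\in B$,
\[\lim_{s\to 0^+}\frac{\mu^s_p(\locstabp[s]{p}\cap B)}{\mu^s_p(\locstabp[s]{p})}=1.\]
Define
\[s^*(p):=\sup\Bigl\{\,s>0:\,\tfrac{\mu^s_p(\locstabp[s']{p}\cap B)}{\mu^s_p(\locstabp[s']{p})}>1-\rho\text{ for all }0<s'\le s\Bigr\};\]
then $s^*>0$ $\mu$-a.e.\ on $B$, so writing $B_k:=\{p\in B:\,s^*(p)\ge 1/k\}$ the sets $B_k$ increase to a full-measure subset of $B$. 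Hence for some $k=k(B)\in\N$ one has $\mu(B_k)>1-\delta$.

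Next I would transfer this density estimate along the orbit. Setting $\alpha_n(\xi,x):=\|D_xf_\xi^n|_{E^s_\xi(x)}\|$, the affine-parameter identity $H^s_{F^n(\xi,x)}\circ f_\xi^n = D_xf_\xi^n\circ H^s_{(\xi,x)}$ of Proposition \ref{prop:Stabman} gives $F^n(\locstab x \xi)=\locstabp[r\alpha_n(\xi,x)]{F^n(\xi,x)}$, while the quasi-invariance $F^n_*\mu^s_{(\xi,x)}\simeq\mu^s_{F^n(\xi,x)}$ makes the Radon--Nikodym factor cancel in the ratio, so that
\[\frac{\mu^s_{(\xi,x)}(\locstab x \xi\cap F^{-n}(B))}{\mu^s_{(\xi,x)}(\locstab x \xi)}=\frac{\mu^s_{F^n(\xi,x)}(\locstabp[r\alpha_n]{F^n(\xi,x)}\cap B)}{\mu^s_{F^n(\xi,x)}(\locstabp[r\alpha_n]{F^n(\xi,x)})}.\]
Consequently $\dens^1_n(x,\xi,B)>1-\rho$ is implied by the two conditions $F^n(\xi,x)\in B_k$ and $r_0\alpha_n(\xi,x)\le 1/k$.

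Finally, picking any $r_0>0$ and invoking Proposition \ref{prop:growth} (with $\epsilon$ chosen so that $\lambda^s+\tfrac{1}{2}\epsilon<0$), I get $\alpha_n(\xi,x)\to 0$ exponentially for $\mu$-a.e.\ $(\xi,x)$, so there is a finite $N=N(\xi,x,B)$ with $r_0\alpha_n\le 1/k$ for every $n\ge N$. Birkhoff's theorem applied to $1_{B_k}$ gives $d(\{n:F^n(\xi,x)\in B_k\})=\mu(B_k)$ $\mu$-a.e., and since removing the finite initial segment $n<N$ does not affect lower density,
\[\underline{d}\bigl(\{n\in\N:\dens^1_n(x,\xi,B)>1-\rho\}\bigr)\ge\mu(B_k)>1-\delta.\]
The only delicate point is that $r_0$ must be chosen in terms of $\rho,\delta$ alone while the scale $1/k$ inherent to $B_k$ depends on $B$; this is reconciled precisely by the exponential contraction of $\alpha_n$, which drives $r_0\alpha_n$ below any fixed positive threshold after finitely many iterates. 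In effect any $r_0>0$ will do, and I do not anticipate any other substantive obstacle.
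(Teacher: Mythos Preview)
Your proof is correct and follows the same strategy as the paper's: define the set of points at which the $\mu^s$-density of $B$ exceeds $1-\rho$ at all scales below some threshold (your $B_k$ is essentially the paper's $G_{1/k}$), invoke Lebesgue--Besicovitch differentiation to make this set have measure $>1-\delta$, and then combine exponential contraction along stable leaves with Birkhoff recurrence to that set. Your treatment is in fact slightly more explicit than the paper's---you write out the affine-parameter identity $F^n(\locstab x \xi)=\locstabp[r\alpha_n]{F^n(\xi,x)}$ and the cancellation of the Radon--Nikodym factor, and you correctly observe that the contraction makes any choice of $r_0>0$ work (so that $r_0$ can indeed be taken independent of $B$, as the quantifier order in the statement demands).
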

\begin{proof}
For each   $r'$ define 
\[G_{r'} = \{
q\in X\mid  \inf_{ 0<r\le r'} \left\{\dfrac{\scond [ q]{ \locstabp q   \cap B}}{\scond  [q] {\locstabp q }} \right\}>1-\rho.
\}\]
By the density theorem, for $\mu$-\ae $q\in B$, $$\lim_{r\to 0} \dfrac{\scond [ q]{ \locstabp q   \cap B}}{\scond  [q] {\locstabp q }} = 1.$$  We may thus find $r_0>0$ sufficiently small so that $\mu(G_{r_0})>1-\delta$.  

We prove the lemma for  $p= (\xi,x)  \in \rec(G_{r_0})$.  
By Proposition \ref{prop:growth}, there is an $N$ such that for all $n\ge N$ and $r\le r_0$
$$F^n(\locstabp p) \subset \locstabp [r_0]{F^n(p)}.$$
Then for $n\ge N$
$$F^n(\xi,x) \in G_{r_0} \iff  \dens^1_n(x,\xi,B) >1-\rho. $$
\eqref{eq:conclusion1} follows since $(\xi,x)  \in \rec(G_{r_0})$.
\end{proof}


We need a similar but somewhat more complicated  
lemma describing the density of recurrence points along \emph{non-contracting},  \emph{non-invariant} foliations.  For $\eta$ and $\xi$ in the same atom of $\hat \Fol$, we measure the density of preimages of a set $B$ in the fiber over  $\eta$ along the stable foliations defined by $\xi$.  

To avoid confusion in notation, we write $\hol_{\eta,\xi}$ for the trivial identification map 
\begin{align*} 
\hol_{\eta,\xi}\colon M_\eta\to M_\xi, \quad 
\hol_{\eta,\xi}\colon (\eta,y) \mapsto (\xi, y).
\end{align*}
Recall that  $\xi\to \mu_\xi$ is assumed $\hat \Fol$-measurable whence,  for $\nu$-\ae $\xi$ and $\hat \nu_\xi$-\ae $\eta\in \Omega$, 
$$( \hol_{\eta,\xi})_*\mu_\eta = \mu_\xi.$$

		\begin{lemma}\label{lem:rec2}
		For all $\delta>0$ and $\rho>0$  we have the following: 
		
		For any measurable $B\subset X$ with $\mu(B)> 1-\dfrac \rho 3 \delta$, 
		for $\nu$-a.e.\ $\xi$ and $\hat \nu_\xi$-\ae $\eta$  there is a $\mu_\xi$-measurable function $r_0\colon M_\xi \to (0,\infty)$ such that, writing  
		\begin{align} \label{eq:2nddensity}\dens^2_n(x,\xi,\eta,B) :=
		 \inf_{ 0<r\le r_0(x)}\left\{ \dfrac
		 {\scond { \locstab x \xi  \cap \hol_{\eta,\xi}\Big(F^{-n}(B)\cap M_\eta\Big)}}
		 {\scond {\locstab x \xi }} 
		 \right\},\end{align}
we have 
\begin{align}\label{eq:conclusion2}
		\mu_\xi\Big\{
		x\in M_\xi\mid  
		\bar d\Big(\left\{n\in \N_0\mid 
			\dens^2_n(x,\xi,\eta,B)
		>1-\rho
		\right\}\Big)>1-\delta
		\Big\}>\dfrac{1}3.
\end{align}
		\end{lemma}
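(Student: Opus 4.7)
My plan is to adapt the contraction--density scheme of Lemma \ref{lem:rec1} to the present ``twisted'' setting in which density is measured on $\xi$-stable manifolds while the averaged set $A_n^{\xi,\eta} := \hol_{\eta,\xi}(F^{-n}(B) \cap M_\eta) \subset M_\xi$ is obtained by iterating along the $\eta$-orbit. The crucial difference from Lemma \ref{lem:rec1} is that the $\xi$-stable manifold of $x$ is not contracted by the $\eta$-dynamics, so the clean iterate-then-apply-density argument no longer goes through; this loss is what is responsible for the weaker $>1/3$ in the conclusion.

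I would begin by converting the hypothesis on $\mu(B)$ into per-$n$ measure estimates in the fibers. The $\hat{\mathcal{F}}$-measurability of $\xi\mapsto\mu_\xi$ gives $\mu_\xi=\mu_\eta$ (as measures on $M$) for $\nu$-a.e.\ $\xi$ and $\hat\nu_\xi$-a.e.\ $\eta$, so $(\hol_{\eta,\xi})_*\mu_\eta=\mu_\xi$; combined with $F$-invariance of $\mu$ this yields $\mu_\xi(A_n^{\xi,\eta})=\mu_{\theta^n(\eta)}(B_{\theta^n(\eta)})$, where $B_\zeta:=\{x:(\zeta,x)\in B\}$. Applying Birkhoff to $\eta\mapsto 1-\mu_\eta(B_\eta)$ (whose $\nu$-integral is below $\rho\delta/3$) and then Chebyshev to the Birkhoff averages shows that, for $\nu$-a.e.\ $\eta$, the set $N_{\mathrm{good}}(\eta):=\{n\in\N_0:\mu_\xi(A_n^{\xi,\eta})>1-\rho\}$ has lower density at least $1-\delta/3$.

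For each $n\in N_{\mathrm{good}}(\eta)$, the Lebesgue density theorem applied along the $\xi$-stable lamination gives, for $\mu_\xi$-a.e.\ $x$,
\[
\phi(x,n,r):=\frac{\mu^s_{(\xi,x)}(\locstab x \xi\cap A_n^{\xi,\eta})}{\mu^s_{(\xi,x)}(\locstab x \xi)}\longrightarrow 1_{A_n^{\xi,\eta}}(x)\quad\text{as }r\to 0.
\]
Intersecting over the countable family $n\in\N_0$ yields a single $\mu_\xi$-full-measure set on which this convergence holds simultaneously in $n$, and dominated convergence then gives $\int(1-\phi(x,n,r))\,d\mu_\xi\to 1-\mu_\xi(A_n^{\xi,\eta})\le\rho$ as $r\to 0$ for each $n\in N_{\mathrm{good}}(\eta)$.

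The main obstacle is to assemble these per-$n$ pointwise estimates into a single measurable $r_0\colon M_\xi\to(0,\infty)$ controlling the density for an upper-density-at-least-$(1-\delta)$ subset of $n\in N_{\mathrm{good}}(\eta)$, since \emph{a priori} the scale $r$ at which $\phi(x,n,r)>1-\rho$ stabilises depends on both $x$ and $n$. I would handle this by a dyadic Fubini--Chebyshev scheme along the scales $r^{(k)}=2^{-k}$: at each dyadic scale $k$ and each $n\in N_{\mathrm{good}}(\eta)$ use the bound $\int(1-\phi(x,n,r^{(k)}))\,d\mu_\xi\le(1-\mu_\xi(A_n^{\xi,\eta}))+\varepsilon_n(k)$ with $\varepsilon_n(k)\to 0$ in $k$; average over $n\in N_{\mathrm{good}}(\eta)\cap[0,N]$ and interchange the two integrals via Fubini; then, using Lusin/Egorov to choose a measurable section $K(x)$ along the dyadic scales on a set of positive $\mu_\xi$-measure, apply Chebyshev first in $n$ and then in $x$ to extract a positive-$\mu_\xi$-measure set of $x$ on which the scale $r_0(x):=2^{-K(x)}$ simultaneously controls the density as required. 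The $>1/3$ constant in the conclusion is a bookkeeping consequence of the two Chebyshev applications; this last bookkeeping step, and in particular the measurable selection of $K(x)$, is the principal technical obstacle.
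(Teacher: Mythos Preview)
Your setup is correct and you have accurately located the obstacle: the scale at which the density ratio stabilises depends \emph{a priori} on both $x$ and $n$, and your Egorov/Lusin dyadic selection is meant to resolve this. But this is exactly where the argument breaks down. Egorov applied to $r\mapsto\phi(x,n,r)$ gives uniformity in $x$ for a \emph{fixed} $n$; it gives no mechanism for producing a single $K(x)$ that works simultaneously for an upper-density-$(1-\delta)$ set of $n$, since your error terms $\varepsilon_n(k)$ carry no uniformity in $n$. Moreover, the statement requires the infimum over all $0<r\le r_0(x)$, so even finding a scale at which $\phi(x,n,r_0(x))>1-\rho$ would not suffice; you must control the Hardy--Littlewood maximal function $\sup_{0<r\le r_0(x)}(1-\phi(x,n,r))$.

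The paper's device is precisely the maximal inequality, and it removes the obstacle in one stroke. Fix once and for all any measurable partition $\mathcal P$ of $M_\xi$ subordinate to $\{W^s_\xi(x)\}$ and set $r_0(x)$ so that $\locstab[r_0(x)] x\xi\subset\mathcal P(x)$; this $r_0$ is independent of $n$. On each atom $\mathcal P(y)$ (a subset of a curve, so Besicovitch constant $2$) the maximal inequality gives
\[
\tilde\mu^{\mathcal P}_y\{x:\inf_{0<r\le r_0(x)}\phi(x,n,r)\le 1-\rho\}\ \le\ \tfrac{2}{\rho}\,\tilde\mu^{\mathcal P}_y\bigl(\mathcal P(y)\setminus A_n^{\xi,\eta}\bigr),
\]
and integrating over $y$ yields $\mu_\xi\{x:\mathcal D^2_n>1-\rho\}\ge 1-\tfrac{2}{\rho}\,\mu_\xi(M_\xi\setminus A_n^{\xi,\eta})$, uniformly in $n$. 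There is no need to restrict to your $N_{\mathrm{good}}(\eta)$; averaging this over $n$, using your Birkhoff estimate, and applying Fatou gives $\int\bar d\{n:\mathcal D^2_n(x)>1-\rho\}\,d\mu_\xi(x)>1-\tfrac{2}{3}\delta$, after which a single Markov inequality produces the $>1/3$ set. The maximal inequality is the missing ingredient; once you invoke it, the dyadic scheme and Egorov selection become unnecessary.
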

\def\HH{\hol_{\eta,\xi}}

\begin{proof}
Write $\hat \delta := \dfrac \rho 3 \delta$.  
We note that, for a given $B$ and the $\nu$-measurable function $\eta\mapsto \mu_\eta(B)$, for $\nu$-a.e. $\xi$ and $\hat \nu_\xi$-\ae $\eta$
\begin{equation}\begin{aligned}\label{boowhoo}
 \lim _{N\to \infty} \dfrac{1}N\sum_{k=0}^{N-1}\mu_{\xi }\big(\HH(F^{-k}(B)) \big)
  &= \lim _{N\to \infty} \dfrac{1}N\sum_{k=0}^{N-1}\mu_{\eta }(F^{-k}(B))\\
  &= \lim _{N\to \infty} \dfrac{1}N\sum_{k=0}^{N-1}\mu_{\sigma^k(\eta) }(B)\\
  &> 1-\hat \delta.\end{aligned}\end{equation}
We prove the lemma for any $\xi$ such that 
\begin{enumerate}
\item  for $\hat \nu_\xi$-a.e.\ $\eta\in \Omega$, we have $\mu_\xi= \mu_\eta$;
\item $\hat \nu_\xi$-a.e.\ $\eta\in \Omega$ satisfies \eqref{boowhoo};
\item {$\mu_\xi(X_0) = 1$}. 
\end{enumerate} 

Let $\P$ be a $\mu_\xi$-measurable partition of $M_\xi$ subordinate to the partition of $M_\xi$ into stable manifolds $\{\stab x \xi\}$. \def\M{\mathcal M}
We choose the function $r_0= r_0(x)$ in the lemma so that $\locstab[r_0]x \xi  \subset \P(x)$ for $\mu_\xi$-\ae $x\in M_\xi$.  
Write $\{\td \mu^\P_x\}_{x\in M_\xi}$ for a family of conditional measures of $\mu_\xi$ with respect to the partition $\P$.  Up to normalization, $\td \mu^\P_x$ is \ae the restriction of $\mu^{s}_{(\xi,x)}$ to  $\P(x) \subset \stab x \xi$.  

For $x\in M_\xi$ and $k\in \N_0$ define the functions
	\[\M^k(x) := \inf_{0<r\le r_0(x)}\dfrac{\td \mu^\P_x\left( \locstab x \xi \cap \HH\big(F^{-k}(B) \big) \right)}{\td \mu^\P_x\left( \locstab x \xi \right) } \]
and 
\[\widehat \M^k(x) := \sup_{0<r\le r_0(x)}\dfrac{\td \mu^\P_x\left( \locstab x \xi \sm \HH\big(F^{-k}(B) \big) \right)}{\td \mu^\P_x\left( \locstab x \xi\right) } .\]
Clearly  $\M^k(x) = 1- \widehat \M^k(x)$.  
Note that the constant in the Besicovitch covering lemma for $\R$ is $2.$  Restricting the maximal function $\widehat M^k$ 
to the probability space $\P(y)\subset (\stab x \xi , \td \mu^\P_y) \cong (\R,  \td \mu^\P_y)$,  for every $k\in \N_0$ 
we have the maximal inequality
\[
\td \mu^\P_y\left(\left \{
x\in \P(y)\mid \widehat \M^k(x) >\rho
\right\}\right)\le \dfrac{2}{\rho}\ \td \mu^\P_y\left(\P(y) \sm \HH\big( F^{-k}(B)\big)\right).
\]
It then follows that  for each $k$ and $\mu_\xi$-\ae $y$, 
\[\td \mu^\P_y\left(\left \{
x\in \P(y)\mid \M^k(x) >1- \rho
\right\}\right)\ge 1- \dfrac{2}{\rho}\td \mu^\P_y\left(\P(y) \sm\HH\big( F^{-k}(B)\big) \right).
\]

The lemma follows if we show that
\[\mu_\xi \left \{x\mid \bar d( \{k\in \N_0 \mid \M^k(x)>1-\rho\})>1- \delta\right\}>\dfrac 1 3.\] 
Set 
\[G:=\{(x,k) \in M_\xi\times\N_0\mid \M^k(x) >1-\rho\}.\]
From \eqref {boowhoo} 
we have 
\begin{align*}
 \limsup_{N\to \infty} 
 \dfrac{1}{N} \sum_{k = 0}^{N-1}  &\int_{M_\xi}   1_{G}(x,k)  \ d\mu_\xi(x)
 \\&=  \limsup_{N\to \infty} 
\dfrac{1}{N} \sum_{k = 0}^{N-1}   \int_{M_\xi}  \td \mu^\P_y\left(\left \{
x\in \P(y)\mid \M^k(x) >1- \rho
\right\}\right)  \ d\mu_\xi(y)
 \\&\ge \limsup_{N\to \infty} 
  \dfrac{1}{N}\sum_{k = 0}^{N-1}\int_{M_\xi}  \left( 1- \dfrac{2}{\rho}\td \mu^\P_y\left(\P(y) \sm \HH\big( F^{-k}(B)\big)\right)\right ) \ d \mu_\xi(y) 
  \\&=  1 - \lim_{N\to \infty}   
  \dfrac {2}{\rho}\dfrac{1}{N}\sum_{k = 0}^{N-1}  \mu_\xi \left(M_\xi\sm\HH\big({F^{-k}(B)}\big)\right)
  \\&>  1 - \dfrac 2\rho \hat \delta = 1 - \dfrac 2 3 \delta. 
\end{align*}

Write \[\psi (x) : = \limsup_{N\to \infty} \dfrac 1 N \sum_{k = 0}^{N-1}    1_{G}(x,k)   =\bar d( \{k\in \N_0 \mid \M^k(x)>1-\rho\}).  \] 
From Fatou's lemma we have
\[ 
\int\psi (x) \ d \mu_\xi(x)
\ge\limsup_{N\to \infty} \dfrac{1}{N}\sum_{k = 0}^{N-1}      \int_{M_\xi}  1_{G}(x,k)    \ d\mu_\xi(x)
> 1 -  \dfrac 2 3 \delta  .\]
Then 
\begin{align*}1 -  \dfrac 2 3 \delta  
&<  \int_{M_\xi }\psi \ d \mu_\xi\\
&\le \mu_\xi\left(\{x\mid \psi(x)\ge1 -   \delta\} \right)  +(1 -  \delta)\left(1-  \mu_\xi\left(\{x\mid \psi(x)\ge1 -  \delta\} \right) \right) \\
 &= \delta \mu_\xi\left(\{x\mid \psi(x)\ge1 -  \delta\} \right)  +1 -   \delta \end{align*}
whence
\[ \mu_\xi(\{x\mid \psi(x)\ge 1 - \delta \} )> \dfrac1 3\]
concluding the proof.\end{proof}

				\subsection{Quasi-isometric estimates for stopping times} 

Consider fixed  $x\in M$ and $ \xi,\eta\in \Omega$ such that { $(\xi,x)\in X_0$ and $(\eta,x)\in X_0$.}  For any $r>0$ define \emph{stopping times} 
$$ \tau_{1,r}= \tau_{1,r, x, \xi}\colon \N_0\to \Z, \quad \quad   \tau_{2,r}= \tau_{2,r, x, \xi, \eta}\colon \N_0\to \Z$$ by 
\begin{align}
	\label{eq:tau1}\tau_{1,r}(m) &: = \sup\{n\in \Z : \lyap{\restrict{DF^n}{\Eu x \xi )}}_{\eps}  
							\ 	\lyap{\restrict{DF^{-m}} {\Es x\xi)}}_{\eps} \inv\}< r;		\\	
\label{eq:tau2}\tau_{2,r}(m) &: = \sup\{n\in \Z :\lyap{\restrict{DF^n}{\Eu x \eta}}_{\eps} 
							\ 	 \lyap{\restrict{DF^{-m}} {\Es x \xi)}}_{\eps} \inv\}< r.
\end{align}
We remark that each $\tau_{j,r}$ is increasing and takes only finitely many non-positive values.  

			\begin{lemma}
			The maps $\tau_{j,r}\colon \N_0\to \Z$ are quasi-isometric embeddings  with constants uniform in $r$.  That is, setting
			\begin{align}
			L &:= \max\left\{\dfrac{-\lambda ^s+ \eps}{\lambda^u-\eps}, \dfrac{\lambda^u+\eps}{-\lambda ^s- \eps}\right\}\\
			a &:= \lambda^u + \eps,\end{align}
			 for all $m,\ell\in \N_0, r>0$ and $ j\in \{1,2\}$ 
			\begin{align}\dfrac 1 L \ell - a\le \tau_{j,r}(m+\ell) - \tau_{j,r} (m) \le L\ell + a\label{eq:QIdefn} \end{align}
			\end{lemma}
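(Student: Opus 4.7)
The plan is to reduce the quasi-isometric estimate to the one-step multiplicative Lyapunov-norm bounds from the uniform-hyperbolicity proposition. Set $A_n := \lyap{\restrict{DF^n}{E^u_\xi(x)}}_{\eps}$ and $B_m := \lyap{\restrict{DF^{-m}}{E^s_\xi(x)}}_{\eps}$, so that $\tau_{1,r}(m) = \sup\{n \in \Z : A_n < rB_m\}$. Since $\eps < \lambda^u$ and $\eps < -\lambda^s$, those estimates yield that $A_n$ is strictly increasing in $n$ with $A_n \to 0$ as $n \to -\infty$ and $A_n \to \infty$ as $n \to +\infty$, and similarly that $B_m$ is strictly increasing in $m$. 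Consequently $\tau_{1,r}(m)$ is a finite integer, is non-decreasing in $m$, and is characterized by the sandwich
\[A_{\tau_{1,r}(m)} < rB_m \le A_{\tau_{1,r}(m)+1}.\]

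With $T := \tau_{1,r}(m)$ and $T' := \tau_{1,r}(m+\ell)$, writing this sandwich at both $m$ and $m+\ell$ and dividing pairwise to cancel $r$ gives
\[\frac{A_{T'}}{A_{T+1}} \,<\, \frac{B_{m+\ell}}{B_m} \,\le\, \frac{A_{T'+1}}{A_T}.\]
Into the outer ratios I would substitute the multiplicative bounds $A_{n+k}/A_n \in [e^{k\lambda^u - |k|\eps}, e^{k\lambda^u + |k|\eps}]$ and $B_{m+\ell}/B_m \in [e^{\ell(-\lambda^s - \eps)}, e^{\ell(-\lambda^s + \eps)}]$ coming from the same uniform-hyperbolicity proposition. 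Taking logarithms, the left inequality yields $T' - T - 1 \le \ell(-\lambda^s + \eps)/(\lambda^u - \eps) \le L\ell$, and the right inequality yields $T' - T + 1 \ge \ell(-\lambda^s - \eps)/(\lambda^u + \eps) \ge \ell/L$; these are exactly the stated bounds after absorbing the $\pm 1$ into the additive constant $a$.

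For $\tau_{2,r}$ the computation is verbatim the same, with $A_n$ replaced by $\lyap{\restrict{DF^n}{E^u_\eta(x)}}_{\eps}$; the hypothesis $(\eta,x)\in X_0$ ensures the Lyapunov-norm estimates hold along the orbit of $(\eta,x)$ with identical exponents $\lambda^u,\lambda^s$, so the same $L$ and $a$ work. Uniformity of the QI constants in $r$ (and in $m, x, \xi, \eta$) is automatic because $L$ and $a$ depend only on $\lambda^u,\lambda^s,\eps$.

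I do not anticipate any genuine obstacle here; the estimate is essentially bookkeeping with the multiplicative Lyapunov-norm gap. The only points requiring care are the strict monotonicity of $A_n$ and $B_m$ (resting on $\eps < \min(\lambda^u,-\lambda^s)$, which guarantees the supremum is attained by a finite integer) and the verification that a single additive constant $a$ simultaneously handles the integer off-by-one at both endpoints of both the upper and lower QI inequalities.
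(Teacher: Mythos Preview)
Your approach is correct and essentially the same as the paper's: both use the defining sandwich $A_{\tau(m)} < rB_m \le A_{\tau(m)+1}$ together with the one-step multiplicative Lyapunov-norm bounds to compare $A_{T'}/A_T$ with $B_{m+\ell}/B_m$. The only difference is cosmetic (you divide the two sandwiches directly, whereas the paper inserts a cancelling factor $A_T\cdot A_T^{-1}$ into the single inequality at $m+\ell$); the looseness you flag about whether the $\pm 1$ really fits inside the stated $a=\lambda^u+\eps$ is present in the paper's final displayed line as well, and the precise value of the additive constant is immaterial for the subsequent applications.
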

\begin{proof}
We prove the lemma for $\tau:= \tau_{2,r}$; the proof and resulting estimates for $\tau_{1,r}$ are identical.  
Let $\kappa= \exp(\lambda^u+\eps)$.  
By definition we have 
\begin{equation}\begin{aligned}\label{eq:QI}
\kappa\inv r \le 
&\lyap{ \restrict{DF^{\tau(m+\ell)}}{\Eu x \eta}}_{\eps} \cdot  \   
\lyap{\restrict{DF^{-m-\ell\phantom{)}}} {\Es x \xi}}\inv_{\eps}  \\ 
& \quad \quad\cdot \  \lyap{\restrict{DF^{\tau(m)}}{\Eu x \eta}}_{\eps} \cdot \ 
  \lyap{\restrict{DF^{\tau(m)}}{\Eu x \eta}}\inv_{\eps} \ 
  \le r.\end{aligned}\end{equation}
We bound the product of the middle terms of \eqref{eq:QI} by
\begin{align*} &\exp((\lambda^s - \eps)\ell) \kappa \inv r
\\& \le \lyap{\restrict{DF^{-m-\ell}} {\Es x \xi}}\inv_{\eps} \cdot \  \lyap{\restrict{DF^{\tau(m)}}{\Eu x \eta}} _{\eps}\ 
\\ &\le \exp((\lambda^s + \eps)\ell) r\end{align*}
and the product of outermost terms of \eqref{eq:QI}  by
\begin{align*}
&\exp((\lambda^u-\eps)(\tau(m+\ell) - \tau(m))) \\
&\le  \lyap{\restrict{DF^{\tau(m+\ell)}}{\Eu x \eta}} _{\eps}\cdot \ \lyap{\restrict{DF^{\tau(m)}}{\Eu x \eta}} \inv _{\eps}
\\ &\le \exp((\lambda^u+\eps)(\tau(m+\ell) - \tau(m)) )
.\end{align*}
Reassembling, \eqref{eq:QI} we  have 
\[
\exp\big((\lambda^u-\eps)(\tau(m+\ell) - \tau(m))\big)
\exp((\lambda^s - \eps)\ell) \kappa \inv r\le r\]
and 
\[\exp\big((\lambda^u+\eps)(\tau(m+\ell) - \tau(m))\big) 
\exp((\lambda^s + \eps)\ell) r\ge \kappa\inv r\] 
hence 
\[
\dfrac{-\lambda ^s- \eps}{\lambda^u+\eps} \ell - \log \kappa
\le 
\tau(m+ \ell) - \tau(m)
\le 
\dfrac{-\lambda ^s+ \eps}{\lambda^u-\eps} \ell + \log \kappa. \qedhere\]
 \end{proof}

\begin{lemma}
Let $\tau\colon\N_0\to \Z $ be a quasi-isometric embedding  with constants $L$ and $a$ satisfying \eqref{eq:QIdefn}.  
For any $G\subset \N_0$ we obtain the following bounds on the densities of the image and preimage of $G$ under  $\tau$: 
\begin{enumerate}[label={\arabic*}), ref={\arabic*})]
\item $\dfrac{1}{L^2 a} \underbar d(G) \le \underbar d(\tau(G))$
\item $\dfrac{1}{L^2 a} \bar d(G) \le \bar d(\tau(G))$ \label{item:2here}
\item if $\underbar d(G) \ge1-\delta$ then $\underbar d (\tau\inv (G))\ge 1- (L^2a)\ \delta$.  
\end{enumerate}
\end{lemma}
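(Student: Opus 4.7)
The plan is to deduce all three density estimates directly and combinatorially from the two-sided inequality \eqref{eq:QIdefn}, translating it into elementary counting between intervals $[0,N]\subset\N_0$ and $[0,T]\subset\Z$. The argument has no analytic content; the work is entirely bookkeeping.

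First I would extract two consequences of \eqref{eq:QIdefn}. Setting $m=0$ and $\ell=n$ gives
\[
\tfrac{1}{L}n - a \;\le\; \tau(n) - \tau(0) \;\le\; Ln + a,
\]
so $\tau$ grows linearly with rate trapped in $[1/L,L]$; since $\tau$ takes only finitely many nonpositive values, the constant $\tau(0)$ contributes at most a bounded additive error in all densities and can be absorbed into an $O(1)$ term. Second, although $\tau$ is increasing, it need not be strictly so: if $\tau(m_1)=\tau(m_2)$ with $m_1<m_2$, then \eqref{eq:QIdefn} forces $\tfrac{1}{L}(m_2-m_1)-a\le 0$, i.e.\ $m_2-m_1\le aL$. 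Hence every value in the image $\tau(\N_0)$ has at most $\lfloor aL\rfloor+1$ preimages.

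For parts 1) and 2), given $T$ I set $N:=\max\{n\in\N_0:\tau(n)\le T\}$; the first consequence above gives $N = T/L + O(1)$. Up to finitely many nonpositive values of $\tau$, the set $\tau(G)\cap [0,T]$ coincides with $\tau(G\cap[0,N])$, and by the multiplicity bound its cardinality is at least $\#(G\cap[0,N])/(\lfloor aL\rfloor+1)$. Dividing by $T$, using $N/T\to 1/L$, and passing to $\liminf$ (respectively $\limsup$) then yields the inequalities in 1) (respectively in 2)) with explicit constant $L(aL+1)$, which is dominated by the cleaner expression $L^2 a$ appearing in the statement.

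For part 3), I would count the complement $\N_0\setminus\tau^{-1}(G)=\{n\in\N_0:\tau(n)\notin G\}$. From $\underbar d(G)\ge 1-\delta$ and the first consequence of \eqref{eq:QIdefn}, the image $\tau([0,N])$ is contained in an interval $[-O(1),LN+O(1)]$ in which $\Z\setminus G$ contains at most $\delta\cdot LN + o(N)$ points. Applying the multiplicity bound once more, the number of $n\in[0,N]$ with $\tau(n)\notin G$ is at most $(\lfloor aL\rfloor+1)\bigl(\delta LN + o(N)\bigr)+O(1)$; dividing by $N$ and absorbing the constant $L(aL+1)$ into $L^2 a$ gives $\bar d(\N_0\setminus\tau^{-1}(G))\le L^2 a\,\delta$, and the claim follows. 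The main, entirely mechanical, obstacle is keeping the constants organized: the form $L^2 a$ in the statement is a convenient loose upper bound on the constant the proof actually produces, and the finitely many nonpositive values of $\tau$ together with the bounded multiplicity require care but contribute only $O(1)$ corrections, so no new idea is needed.
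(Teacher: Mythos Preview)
Your approach is essentially the paper's: extract from \eqref{eq:QIdefn} that $\tau$ is at most $O(La)$-to-one and that $\tau([0,t])$ lies in an interval of length $\approx Lt$, then count. The paper argues from the domain side (fix $t$, push $[0,t]$ forward by $\tau$) rather than your codomain side (fix $T$, pull back via $N(T)=\max\{n:\tau(n)\le T\}$), and for part 3) it simply applies part 2) to the complement $G^c$ rather than re-counting; but the content is identical.

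There is one concrete error. You obtain the constant $L(aL+1)=L^2a+L$ and then assert it is ``dominated by'' $L^2a$; the inequality goes the other way, so what your argument actually yields is $\tfrac{1}{L^2a+L}\,\underbar d(G)\le \underbar d(\tau(G))$, strictly weaker than the stated bound. (The paper sidesteps this by declaring $\tau$ to be ``$(La)$-to-one'' rather than the more honest $(\lfloor La\rfloor{+}1)$-to-one, so it is committing the same looseness.) Two further imprecisions are harmless but worth noting. First, the two-sided QI bound only gives $T/L-O(1)\le N\le LT+O(1)$, not $N=T/L+O(1)$ or $N/T\to 1/L$; fortunately only the lower bound $N\ge T/L-O(1)$ is needed, and that is what you actually use. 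Second, for the $\limsup$ in part 2), the function $T\mapsto N(T)$ need not hit every integer (it skips $n$ whenever $\tau(n)=\tau(n{+}1)$), so $\limsup_T \#(G\cap[0,N(T)])/N(T)$ is a priori only $\le \bar d(G)$, the wrong direction; you need the additional (easy) observation that every integer lies within $La$ of some value $N(T)$ to recover equality. The paper's domain-to-codomain direction avoids this wrinkle automatically, since there $\limsup$ over a subsequence being $\le$ the full $\limsup$ is the direction required.
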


\begin{proof} 
We first note that if $\tau(n) = \tau (n+k)$, $k\ge 0$, then 
\[ 0 = \tau(n+k) - \tau(n) \ge \dfrac 1 L k - a\]
hence $k\le La$.  In particular, the map $\tau\colon \N_0\to \Z$ 
 is at worst $(La)$-to-one.  
Secondly, 
we have the inclusion
 \[\tau([0, t]\cap \N_0)\subset [\tau(0)-a, Lt + a +\tau(0)].\] 
We thus obtain bounds
\begin{align*}
 \dfrac{\#([0,t]\cap G)}{t}
	&\le    (La)\  \dfrac{\#\big([\tau(0)-a,Lt+a +\tau(0)]\cap \tau (G)\big)}{t}\\
	&=  (L^2a)\  \dfrac{\#\big([\tau(0)-a,Lt+a +\tau(0)]\cap \tau (G)\big)}{Lt}.
\end{align*}
Taking $\liminf_{t\to \infty}$ and $\limsup_{t\to \infty}$ yields the first two bounds.  
 
For the final bound, let $G^c:= \Z\sm G$.  Then $\tau\inv(G)$ and $\tau\inv (G^c)$ partition $\N_0$.  
We have 
\[\bar d(G^c) :=  \limsup_{N\to \infty}\left(1-  \dfrac {\#([0,N]\cap  G)}{N} \right)= 
1 -  \liminf_{N\to \infty}  \dfrac {\#([0,N]\cap   G)}{N} \le \delta.\]
hence by \ref{item:2here} 
$$\bar d (\tau\inv (G^c)) {\le(L^2a) \ \bar d (\tau (\tau\inv(G^c))) } \le  (L^2a) \ \bar d (G^c)\le (L^2 a) \delta.$$
We then have 
\[\underbar d (\tau\inv (G) )\ge 1 - \bar d (\tau\inv (G^c)) \ge 1- (L^2 a) \delta. \qedhere\]
\end{proof}

\def\M{\mathscr M}
As a corollary, we obtain the following.  
\begin{lemma}\label{lem:jointreturns}
Let $\good_i,  \M \subset \N_0$ satisfy
\begin{enumerate}
\item $\bar d (\good_1) \ge1-\delta$
\item $\underbar d (\good_i) \ge1-\delta$ for $2\le i \le 4$
\item $\underbar d (\M) \ge1-\delta$
\end{enumerate} and let 
$\tau_1, \tau_2\colon \N_0\to \Z$ be a quasi-isometric embedding with constants $L$ and $a$ satisfying \eqref{eq:QIdefn}.  
Then \[\bar d\bigg(
		\good_1\cap \good_3 \cap  \tau_2\Big(
				\M\cap \tau_{1}\inv\big(\good_2\big)\cap \tau_{1}\inv\big(\good_4\big)
		\Big)
\bigg)\ge\dfrac 1{L^2a} \big(1-(4L^2a +1) \delta\big).\]
\end{lemma}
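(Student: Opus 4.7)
The plan is to assemble the three conclusions of the preceding quasi-isometry lemma with the standard calculus of upper/lower asymptotic densities. Recall the elementary inequalities for $A, B \subset \N_0$:
\begin{align*}
\underbar d(A \cap B) &\ge \underbar d(A) + \underbar d(B) - 1,\\
\bar d(A \cap B) &\ge \bar d(A) + \underbar d(B) - 1,
\end{align*}
both of which follow from $\bar d((A\cap B)^c) \le \bar d(A^c) + \bar d(B^c)$ together with $\bar d(C^c) = 1 - \underbar d(C)$. The first iterates to give $\underbar d(A_1 \cap \cdots \cap A_n) \ge 1 - \sum_i (1 - \underbar d(A_i))$.

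I would proceed in four steps, tracking the density losses at each stage. First, apply part (3) of the preceding lemma to $\good_2$ and $\good_4$ to obtain $\underbar d(\tau_1^{-1}(\good_i)) \ge 1 - L^2 a\,\delta$ for $i = 2, 4$. Second, intersect these with $\M$ using the iterated lower-density inequality to obtain
\[
\underbar d\bigl( \M \cap \tau_1^{-1}(\good_2) \cap \tau_1^{-1}(\good_4) \bigr) \ge 1 - (2L^2 a + 1)\delta.
\]
Third, apply part (1) of the preceding lemma to push this set forward under $\tau_2$, which costs a factor of $L^2 a$:
\[
\underbar d\bigl( \tau_2\big(\M \cap \tau_1^{-1}(\good_2) \cap \tau_1^{-1}(\good_4)\big)\bigr) \ge \frac{1}{L^2 a}\bigl(1 - (2L^2 a + 1)\delta\bigr).
\]

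Fourth, intersect successively with $\good_3$ and $\good_1$. Since $\underbar d(\good_3) \ge 1 - \delta$, the lower-density inequality yields a loss of at most $\delta$ when intersecting with $\good_3$. For the final intersection with $\good_1$ I only have the hypothesis $\bar d(\good_1) \ge 1 - \delta$ (not a lower-density bound), so I invoke the asymmetric inequality $\bar d(A \cap B) \ge \bar d(A) + \underbar d(B) - 1$ with $A = \good_1$ and $B$ the set constructed above. Combining these two intersections, the final estimate is
\[
\bar d\bigl(\good_1 \cap \good_3 \cap \tau_2(\cdots)\bigr) \ge \frac{1}{L^2 a}\bigl(1 - (2L^2 a + 1)\delta\bigr) - 2\delta = \frac{1}{L^2 a}\bigl(1 - (4L^2 a + 1)\delta\bigr),
\]
which is the claimed bound.

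There is no real obstacle here; the entire argument is bookkeeping. The only subtle point is the asymmetry between $\good_1$ (upper density) and the other sets (lower density): one must apply the three preceding parts of the lemma in an order that leaves $\good_1$ for the very last intersection, so that the only fact needed about it is the $\bar d$-bound. Any other ordering would require an unavailable lower-density estimate on $\good_1$.
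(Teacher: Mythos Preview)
Your proof is correct and follows essentially the same route as the paper: pull back $\good_2,\good_4$ through $\tau_1$ via part (3), intersect with $\M$, push forward through $\tau_2$ via part (1), then intersect with $\good_3$ and finally with $\good_1$ using the asymmetric $\bar d$--$\underbar d$ inequality. The bookkeeping and the final arithmetic $\tfrac{1}{L^2a}\bigl(1-(2L^2a+1)\delta\bigr)-2\delta$ match the paper exactly.
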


\begin{proof}
For $A, B\subset \N_0$ we have 
 bounds 
$\underbar d(A) =1- \bar d(A^c)$, 
$\underbar d(A\cap B) \ge 
\underbar d(A) - \bar d(B^c)$, and  
$\bar d(A\cap B)\ge \bar d(A) - \bar d(B^c)$.
Thus $$\underbar d\Big(
				\M\cap \tau_{1}\inv\big(\good_2\big)\cap \tau_{1}\inv\big(\good_4\big)
		\Big)\ge 1- 2(L^2a) \delta - \delta, $$
 $$\underbar d \bigg(\good_3 \cap  \tau_2\Big(
				\M\cap \tau_{1}\inv\big(\good_2\big)\cap \tau_{1}\inv\big(\good_4\big)
		\Big)\bigg)\ge \dfrac 1 {L^2 a} \big( 1- 2(L^2a) \delta - \delta\big) - \delta,$$
and 
\begin{align*}\bar d\bigg(
		\good_1\cap \good_3 \cap   \tau_2\Big(
				& \M\cap \tau_{1}\inv\big(\good_2\big)\cap \tau_{1}\inv\big(\good_4\big)
		\Big)
\bigg)
\\ &
\ge
 \dfrac 1 {L^2 a} \big( 1- 2(L^2a) \delta - \delta\big) - 2\delta 
.\qedhere\end{align*}
\end{proof}

\subsection{Bounds on measures of accumulation sets}\label{sec:accset}
Recall that we assume $\Omega$ to be Polish, whence  $X$ is  second countable.  Let $\U$ be a countable basis for the topology on $X$ and let $\U^*$ be the set of all finite unions of elements from $\U$.  We have that  $\U^*$ is countable whence $$\mu\left(\bigcap_{O\in U^*} \rec(O )\right)= 1.$$

\begin{lemma}\label{lem:measureofaccumulation}
Let $\good\subset \N_0$ have upper density $\bar d(\good) \ge \gamma$.  Let $q\in \bigcap_{O\in U^*} \rec(O )$ and assume the set \[\{F^n(q)\mid n\in \good\}\] is precompact. 
Let 
\[G:= \bigcap_{N= 1} ^\infty \overline{
						\{F^k(q) \mid k\in [N,\infty)\cap \good\}  }.\]
Then $\mu(G)\ge\gamma.$
\end{lemma}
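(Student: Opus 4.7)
The approach will be proof by contradiction, pairing outer regularity of the Borel measure $\mu$ with the strong recurrence hypothesis $q \in \bigcap_{O \in \U^*} \rec(O)$. Suppose $\mu(G) < \gamma$. I will exhibit an open set $O \in \U^*$ containing $G$ with $\mu(O) < \gamma$, and then show that $F^k(q) \in O$ for all but finitely many $k \in \good$. From this the upper density of the return-time set to $O$ is at least $\gamma$, whereas $q \in \rec(O)$ forces the (existing) density to equal $\mu(O) < \gamma$, a contradiction.

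To produce $O$, first observe that $G$ is compact: each set $\overline{\{F^k(q) \mid k \in [N,\infty) \cap \good\}}$ is a closed subset of the precompact set $\overline{\{F^n(q) \mid n \in \good\}}$, hence compact, and $G$ is their nested intersection. By outer regularity of $\mu$ choose an open $V \supset G$ with $\mu(V) < \gamma$, then cover $G$ by finitely many elements of the basis $\U$ each contained in $V$. Their union $O$ lies in $\U^*$ and satisfies $G \subset O \subset V$, so $\mu(O) < \gamma$. It is essential here that $O$ be built from $\U$ rather than be an arbitrary open set, since the ergodic-theorem-style recurrence hypothesis is assumed only for the countable family $\U^*$.

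Next I would verify the asymptotic containment $F^k(q) \in O$ for cofinitely many $k \in \good$. If this failed, there would be a sequence $k_j \to \infty$ in $\good$ with $F^{k_j}(q)$ lying in the closed set $X \setminus O$. Precompactness of the orbit along $\good$ allows passage to a convergent subsequence with limit $p \in X \setminus O$, but the definition of $G$ as a nested intersection of tail-closures forces $p \in G \subset O$, a contradiction. Consequently $\{k \in \N_0 \mid F^k(q) \in O\}$ contains $\good$ up to a finite set, so $\bar d(\{k \mid F^k(q) \in O\}) \ge \bar d(\good) \ge \gamma$, which together with $d(\{k \mid F^k(q) \in O\}) = \mu(O) < \gamma$ closes the argument.

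I do not anticipate any substantial obstacle: the proof is a clean compactness-plus-regularity argument. The only delicate point is the choice to witness $G$ by an open cover drawn from $\U^*$ rather than an arbitrary open neighborhood, so that the recurrence hypothesis $q \in \rec(O)$ is legitimately available.
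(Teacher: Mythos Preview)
Your proof is correct and follows essentially the same approach as the paper: compactness of $G$ lets you sandwich an $O\in\U^*$ between $G$ and an arbitrary open neighborhood, precompactness of the $\good$-orbit forces cofinite containment in $O$, and the recurrence hypothesis identifies the return density with $\mu(O)$. The only cosmetic difference is that the paper argues directly (showing $\mu(U)\ge\gamma$ for every open $U\supset G$, then invoking outer regularity) rather than by contradiction.
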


\begin{proof}

Since $G$ is compact, for every open $U\supset G$ there is an $O\in \U^*$ with $G\subset O\subset U$.  
Furthermore, by the definition of $G$ and the precompactness of the sequence $\{F^j(q)\mid j\in \good\}$ there is some $M>0$ such  that     \(F^j(q)\in O \)  for all $j\in \good \cap [M,\infty)$.
Using that $q\in \rec(O)$ we have
  \begin{align*}
\mu (U) 	 	&\ge \mu(O)= \lim_{N\to \infty}\dfrac 1 N \sum _{j = 0} ^{N-1} 1_O(F^j(q))\\
 		& \ge \limsup_{N\to \infty}\dfrac { \#([0, N-1] \cap [M,\infty)\cap \good)}{N}\\
		& = \bar d(\good) \ge \gamma.
 \end{align*}

Since $\mu$ is a finite Borel measure on a Polish space $X$, it is  outer regular  
whence
\[ \mu(G) = \inf_{U\supset G} \mu(U)\ge \gamma.\qedhere\]
\end{proof}

\section{Proof of Lemma \ref{lem:main}}\label{sec:lemmaproof}
In this section we complete the proof Lemma \ref{lem:main}. We do this in a sequence of steps. 
\subsection{Step \step: Choice of parameters and Lusin sets}
We have $\lambda^s<0<\lambda^u$ given by the dynamics $F$ and the measure $\mu$.  Recall $\eps$ fixed in Section \ref{sec:BG} and   set
\begin{align*}
			L &:= \max\{\dfrac{-\lambda ^s+ \eps}{\lambda^u-\eps}, \dfrac{\lambda^u+\eps}{-\lambda ^s- \eps}\}\\
			a &:= \lambda^u + \eps.\end{align*}
Fix $\rho= \dfrac{1}{10}$ and choose $0<\delta<\rho$ so that 
\begin{align}\label{eq:delta0}\delta_0:=\dfrac 1{L^2a} \big(1-(4L^2a +1) \delta\big)>0.\end{align}
This will be the $\delta_0$ in Lemma \ref{lem:main}.  

We apply Lusin's theorem to all the structures developed in Section \ref{sec:BG} to find a compact subset 
$K\subset  {X_0}$ with $\mu(K)>1-\dfrac \rho 3 \delta$ so that each of the following varies continuously on $K$:
\begin{enumerate}[label=\emph{\roman*})]
 \item the stable and unstable line fields $(\xi,x) \mapsto E^s_\xi(x)$ and $(\xi,x) \mapsto E^u_\xi(x)$,
\item the choice of orientation on $\E^u$;
 \item the family of local stable and unstable manifolds 
$$(\xi,x)\mapsto \locstabM [1]  x\xi\quad \quad (\xi,x)\mapsto \locunstM [1]  x\xi$$ as $C^1$ embedded curves in $M$ parametrized by affine parameters (see \eqref{eq:defofparam}); 
\item the family $p\mapsto \bar\mu_p$;
\item the $s$- and $u$-Lyapunov norms.
\end{enumerate}
We also assume that the function $(p, z)\mapsto \rho_{p}(z)$, where $\rho_p(z)$ is as in \eqref{eq:rho}, is bounded on $\bigcup_{p\in K} \locunstp [1] p$ and that there exist $C_1, \hat \gamma, \hat r$, and $\Lambda$       satisfying Lemma \ref{lem:standardcrap} with $K\subset \Lambda$.  

Let $A_N\subset K$ be the set of points 
 $$A_N:= \left\{(\xi, x)\in K\mid \dfrac{
		\scond {\locstab[1] x \xi \sm \locstab[\frac 1 N] x \xi
		 }
		}
		{\scond {\locstab[1] x \xi  }
		}>\dfrac 1 2\right\}.
$$
Since $\mu_p^s$ is non-atomic, by letting $N\to \infty$ we may find an $N_0$ so that $\mu(A_{N_0})>1-\delta$.  Fix such an $N_0$ and set $A= A_{N_0}$.   

\subsection{Step \step: Choice of $\xi$, $\eta$, $x$}
We  select $\xi, \eta\in \Omega$ and $x\in M$ that will be fixed for the remainder.     The sets $G_\epsilon$ in Lemma \ref{lem:main} will be the set of accumulation points of $F^{k_j}(\eta,x)$ for appropriate subsequences of $(k_j)\subset \N$ satisfying good recurrence properties.  We note that we actually obtain a positive measure of such points $(\eta,x)$  but only use the existence of a single point.  
We recall the countable basis  $\U$ for the topology on $X$ from Section \ref{sec:accset} and the notation $\U^*$. 

Recall that in Lemma \ref{lem:main} we assume that $(\xi,x) \mapsto E^s_\xi(x)$ is not  $\Fol$-measurable.
By Lemma \ref{lem:VFdichot},  the $\hat \Fol$-measurability hypotheses in Theorem \ref{thm:skewproductABS}, the pointwise ergodic theorem, and Lemma \ref{lem:rec1} 
we have  for 
 $\nu$-\ae $\xi$, $\hat \nu_\xi$-\ae $\eta$, and $\mu_\xi$-\ae $x\in M$ that 
\begin{enumerate}[label=\emph{\roman*}{)}, font=\normalfont]
	\item \label{item:start} $\mu_\xi = \mu_\eta$;
	\item for all $n\ge 1$, $\cocycle [\xi] [-n] = \cocycle [\eta] [-n] $ whence  $E^u_\xi(x) = E^u_\eta(x)$, $\unstM x \xi = \unstM x \eta $, and $H^u_{\xi,x} = H^u_{\eta,x}$;
	\item $\bar\mu_{(\xi,x)} = \bar \mu_{(\eta,x)}$;
	\item for $\mu_{(\xi,x)}^s$-\ae $(\xi,y) \in \stab x\xi$ we have 
	$E^u_\xi(y) = E^u_\eta(y)$, $\unstM y \xi = \unstM y \eta $,  $H^u_{\xi,y} = H^u_{\eta,y}$, and $\bar\mu_{(\xi,y)} = \bar \mu_{(\eta,y)}$;
	\item  \label{item1:2} $\EsM x \eta \neq \EsM x \xi$;
	
	\item \label{item1:3}$(\xi,x) \in \rec (K)$ and  $(\xi,x)\in \rec(A)$;
	\item $(\eta, x)\in \rec(K)$; 
	\item \label{item1:6} $(\eta, x) \in \rec (O)$ for any $O\in \U^*$;
		
	\item \label{item1:10} for $\dens^1_n(x,\xi,K)$  defined in \eqref{eq:1stdensity}
				$$\underbar d\big(\left\{ n\in \N \mid  \dens^1_n(x,\xi,K) >1-\rho \right\} \big) > 1-\delta.$$ 	
	\end{enumerate}
Furthermore, by  Lemma \ref{lem:rec2}, for $\nu$-\ae $\xi$, $\hat \nu_\xi$-\ae $\eta$, the set of $x\in M$ so that 
\begin{enumerate}[label=\emph{\roman*}{)}, font=\normalfont, resume]
\item \label{item1:11}for $\dens^2_n(x,\xi,\eta,K)$  defined in \eqref{eq:2nddensity} 
				$$\bar d\big(\left\{n\in \N_0\mid \dens^2_n(x,\xi,\eta,K)	>1-\rho\right\}\big)>1-\delta$$
\end{enumerate}	
	has $\mu_\xi$-measure at least $\frac 1 3$.

Finally, for a positive $\nu$-measure set of $\xi$ and a positive $\hat \nu_\xi$-measure set of  $\eta$, both $\mu_\xi(K)\ge 1-\delta$ and $\mu_\eta(K)\ge 1-\delta$.  For such $\xi$ and $\eta$, 
the set of $x\in M$ satisfying
\begin{enumerate}[label=\emph{\roman*}{)}, font=\normalfont, resume]

	\item \label{item1:1} $(\xi, x) \in K$ 
	 \item \label{item1:1'} $(\eta, x)\in K$ 
	
	
	\item \label{item1:7} $(\xi, x) $ is a $\mu^s_{(\xi,x)}$-density point $K$:
	\[\lim _{r\to 0} \dfrac{\scond { \locstab x \xi \cap K}} {\scond {\locstab x \xi  }} \to 1.\]
	
	\item  \label{item1:8} $(\xi, x)$ is a $\mu^s_{(\xi,x)}$-density point $\HH(K)$:
	\[\lim _{r\to 0} \dfrac{\scond { \locstab x \xi \cap \HH( K)} } {\scond { \locstab x \xi}  } \to 1.\]
	
\end{enumerate}
has $\mu_\xi$-measure at least $1-2\delta$.   

Since $1-2\delta>2/3$ by construction, we may select a triple $\xi,\eta$, and $x$ satisfying conditions  \ref{item:start} --\ref{item1:8} above.

\subsection{Step \step: Choice of return times}
Fix $\xi, \eta,$ and $x$ from Step 1.  Consider any  $\epsilon>0$.   Recall the definitions of $ \tau_{1,\epsilon}= \tau_{1,\epsilon, x, \xi}\colon \N_0\to \Z$ and $ \tau_{2,\epsilon}= \tau_{2,\epsilon, x, \xi, \eta}\colon \N_0\to \Z$
as defined in \eqref{eq:tau1} and \eqref{eq:tau2}.  We note that all estimates in the remainder are {independent of $\epsilon$}.  

For our fixed $\xi, \eta, x$ define 
\begin{enumerate}
\item  $\good_1:= \{n\in \N _0\mid \dens^2_n(x,\xi,\eta,K)	>1-\rho\}$
\item $\good _2:= \{n\in \N_0 \mid \dens^1_n(x,\xi,K) >1-\rho\}$
\item $\good _3:= \{n\in \N_0 \mid F^{n} (\eta,x) \in K\} $
\item $\good _4:= \{n\in \N_0 \mid F^{n} (\xi,x) \in K\} $
\item $ \M := \{ n\in \N_0 \mid F^{-n} (\xi,x) \in A\}$.  
\end{enumerate}
We have that  $\good_i,$ and $\M$ satisfy the hypotheses of Lemma \ref{lem:jointreturns}.  
Define
	\[\good= \good(\xi,\eta,x):=  \tau_{2,\epsilon}\inv \big( \good_1\big)\cap  \tau_{2,\epsilon}\inv \big( \good_3\big)\cap \M \cap \tau_{1,\epsilon}\inv\big(\good_2\big)\cap \tau_{1,\epsilon}\inv\big(\good_4\big).\]
Note that for a function $g\colon Y\to Z$ and $A\subset Y, B\subset Z$ one has  $$g(g\inv( B)\cap A) = g(A) \cap B.$$ 
Thus 
$$\tau_{2,\epsilon}(\good) = 
		\good_1\cap \good_3 \cap  \tau_{2,\epsilon}\Big(
				\M\cap \tau_{1,\epsilon}\inv\big(\good_2\big)\cap \tau_{1,\epsilon}\inv\big(\good_4\big)
		\Big)
$$
and by Lemma  \ref{lem:jointreturns} \begin{align*}\bar d (\tau_{2,\epsilon}(\good))\ge \dfrac 1{L^2a} \big(1-(4L^2a +1)\delta)= \delta_0> 0.\end{align*}  
  In particular, $\good$ is infinite.  

By definition of $\good_3$ and $\good_4$, for every $j\in \good $ we have $F^{\tau_{1, \epsilon}(j)} (\xi, x)\in K$ and $F^{\tau_{2, \epsilon}(j)} (\eta, x)\in K.$

  \subsection{Step \step: Choice of $\{y_j\}$}
For each sufficiently large $j\in \good$ we select a $y_j$ satisfying Lemma \ref{lem:y} below.
  For any $j \in \N$  define $$r_j:=\|\restrict {DF^{-j}}{\Es x \xi}\|\inv.$$ 
\begin{lemma}\label{lem:y}
For every sufficiently large $j\in \good$ 
there exists  $y\in \locstabM[\rhere] x \xi$ with  $d(x,y)<\hat \gamma$ and  
\begin{enumerate}[label=\emph{\roman*}{)}, font=\normalfont]
\item \label{item6:0} $\bar \mu _{(\eta,y)} = \bar\mu_{(\xi,y)}$,
\item  \label{item6:1} $N_0\inv{ r_j}\le \| H^s_{(\xi,x)} (y)\| \le r_j$,
\item $(\xi, y) \in  K$
and $(\eta, y) \in  K$,
\item $F^{\tau_{1, \epsilon}(j)} (\xi, y)\in K$, and
\item \label{item6:5}$F^{\tau_{2, \epsilon}(j)} (\eta, y)\in K$.
\end{enumerate}
\end{lemma}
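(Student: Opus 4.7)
The plan is to show that for every sufficiently large $j \in \good$, the set of $y \in \locstab[r_j] x \xi$ satisfying all of conditions (i)--(v) has positive $\mu^s_{(\xi,x)}$-measure; picking any such $y$ then proves the lemma. Condition (i) holds for $\mu^s_{(\xi,x)}$-a.e.\ $y$ by property iv of the choices made in Step 2, so it contributes no loss. I will realize each of conditions (ii)--(v) as a large-measure subset of $\locstab[r_j] x \xi$ and conclude by a union bound.

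The essential input is condition (ii), which I transfer from $p_j := F^{-j}(\xi,x)$ to $(\xi,x)$ using the scale $r_j$. Since $j \in \good \subset \M$, the point $p_j$ lies in $A = A_{N_0}$, i.e.
\[
\frac{\mu^s_{p_j}(\locstab[1]{p_j} \setminus \locstab[1/N_0]{p_j})}{\mu^s_{p_j}(\locstab[1]{p_j})} > \frac{1}{2}.
\]
By part (1) of Proposition \ref{prop:Stabman}, the affine parameters on stable manifolds conjugate $F^j\colon W^s(p_j) \to W^s(\xi,x)$ to the scalar multiplication by $\|\restrict{DF^j}{E^s(p_j)}\| = r_j$ (using $\dim E^s = 1$); hence $F^j$ maps $\locstab[r]{p_j}$ onto $\locstab[r\cdot r_j] x \xi$ for every $r>0$. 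Combined with the quasi-invariance $F^j_*\mu^s_{p_j} \simeq \mu^s_{(\xi,x)}$ the ratio is preserved, giving
\[
\frac{\mu^s_{(\xi,x)}(\locstab[r_j] x \xi \setminus \locstab[r_j/N_0] x \xi)}{\mu^s_{(\xi,x)}(\locstab[r_j] x \xi)} > \frac{1}{2}.
\]
For conditions (iv) and (v), I apply the infima in \eqref{eq:1stdensity} and \eqref{eq:2nddensity} at $r = r_j \le r_0$ (valid for $j$ large) and use $\tau_{1,\epsilon}(j) \in \good_2$, $\tau_{2,\epsilon}(j) \in \good_1$ to get relative measure $>1-\rho$ in each case, independently of $\epsilon$. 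For condition (iii), properties xiii and xiv of Step 2 say $(\xi,x)$ is a $\mu^s_{(\xi,x)}$-density point of both $K$ and $\hol_{\eta,\xi}(K)$, giving relative measure $>1-\rho$ at scale $r_j$ for $j$ large. With $\rho = 1/10$, a union bound over the annulus and the four $(1-\rho)$-subsets yields relative measure at least $1/2 - 4\rho > 0$ for the admissible set.

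The remaining requirements $d(x,y) < \hat\gamma$ and $y \in \locstabM[\hat r] x \xi$ are automatic for $j$ large: $r_j \to 0$, the inclusion $\locstab[r_j] x \xi \subset \locstabM[\hat r] x \xi$ is eventual, and the boundedness of $\rho_{(\xi,x)}(\cdot)$ on the compact set $K$ (assured in Step 1) provides bi-Lipschitz comparability between the affine parameter and Riemannian arc length near $x$, bounding $d(x,y)$ by a constant multiple of $r_j$. The main obstacle is the annulus transfer in the second paragraph: one must carefully track the normalization of the infinite family $\{\mu^s_p\}$ under $F^j$ and confirm that the affine-parameter image of $\locstab[r]{p_j}$ is exactly $\locstab[r\cdot r_j] x \xi$; every other ingredient is either a direct application of the infimum definitions from Section \ref{sec:lemmas} or a standard density-point argument.
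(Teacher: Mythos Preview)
Your proposal is correct and follows essentially the same approach as the paper's own proof: both reduce to a union bound on $\locstab[r_j] x \xi$ using the annulus bound from $j\in\M$ (transferred via the affine parameters and quasi-invariance of $\mu^s$), the two density-point conditions for $K$ and $\hol_{\eta,\xi}(K)$, and the two $(1-\rho)$-density bounds coming from $\tau_{1,\epsilon}(j)\in\good_2$ and $\tau_{2,\epsilon}(j)\in\good_1$. The paper compresses the annulus transfer to the phrase ``from the definition of $A$,'' whereas you spell out the scaling $\locstab[r]{F^{-j}(\xi,x)}\mapsto\locstab[r\cdot r_j] x \xi$ explicitly; this is the only difference in presentation.
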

\noindent Here $\hat r$ and $\hat \gamma$ are as in Lemma \ref{lem:standardcrap}.
\begin{proof}
We note that for all but finitely many $j$, we have $r_j \le \rhere$ and $\sup \{d(x,y)\mid y\in  \locstabM[r_j] x \xi\} < \hat \gamma$.  Furthermore, \ref{item6:0} holds for almost every $y\in \locstabM[\rhere] x \xi$.  

For \ref{item6:1}--\ref{item6:5} we have that for all $j\in \good$ sufficiently large  
\begin{enumerate}[label={\alph*}{)}]
\item \label{item4:1}$ \dfrac{\scond { \locstab[r_j] x \xi \cap K}} {\scond { \locstab [r_j]x \xi} } >.9$  
\item \label{item4:2}$ \dfrac{\scond { \locstab[r_j] x \xi \cap \HH( K)}} {\scond {\locstab[r_j] x \xi} } >.9$  
\item \label{item:cc}$r_j< r_0 $ for $r_0$ given by Lemma \ref{lem:rec1} 
\item \label{item:dd}$r_j < r_0(x)$ for $r_0(x) $ given by Lemma \ref{lem:rec2} for our choice of $\xi,\eta$ and $x$.
\end{enumerate} 
From \ref{item:cc} and \ref{item:dd}, Lemmas \ref{lem:rec1} and \ref{lem:rec2}, and the definitions of $\good_1,$ and $ \good_2$ it follows that  
\begin{enumerate}[resume, label={\alph*}{)}]
\item\label{item4:3} $ \dfrac
		 {\scond { \locstab[r_j]x \xi \cap F^{-\tau_{1, \epsilon}(j)}(K)}}
		 {\scond {\locstab[r_j] x \xi }} 
		>.9$	
\item \label{item4:4}$ \dfrac
		 {\scond { \locstab [r_j]x \xi   \cap \hol_{\eta,\xi}\big(F^{-\tau_{2, \epsilon}(j)}(K)\big)}}
		 {\scond {\locstab[r_j]x  \xi}} 
		>.9$
\end{enumerate}
and from the definition of $A$ that
\begin{enumerate}[resume, label={\alph*}{)}]
\item \label{item4:5}$\dfrac{\scond { \locstab [ r_j] x  \xi  \sm  \locstab [ N_0\inv r_j ] x\xi }}{\scond {\locstab[r_j]x \xi}} >.5$.
\end{enumerate}
It follows from \ref{item4:1}, \ref{item4:2}, \ref{item4:3}, \ref{item4:4}, and \ref{item4:5} that there is a positive $\mu^s_{(\xi,x)}$-measure set of points $(\xi,y)\in \locstab[r_j] x \xi$ satisfying  \ref{item6:1}-\ref{item6:5}.  
\end{proof}
For  sufficiently large $j\in \good$, we select $y_j$ satisfying Lemma \ref{lem:y}.  Note that $r_j\to 0$, and hence $y_j \to x$,  as $j\in \good \to \infty$.

\def\ponej{p_j^1}
\def\qonej{q^1_j}
 \def\ptwoj{p^2_j} 
\def\qtwoj{q^2_j}
 \def\zj{z_j}
 \def\wj{w_j} 
 \def\oj{o_j}

 \subsection{Step \step: Bounds on distortion}  
For sufficiently large $j$ and $y_j$ satisfying Lemma \ref{lem:y} we have $(\eta, x),(\eta,y_j)\in K \subset \Lambda$ for a set $\Lambda$ satisfying Lemma \ref{lem:standardcrap}.  We then have that the intersection $$\locstabM[\rhere] {y_j} \eta \cap \locunstM[\rhere] x \xi= \locstabM[\rhere] {y_j} \eta \cap \locunstM[\rhere] x \eta$$
is a singleton. Define  $z_j \in M$ to be this point of intersection (See Figure \ref{fig:1}).   

 We have the following geometric lemma.  
\begin{lemma}\label{lem:riemGeom} 
There is  $C_0>0$ such that for all sufficiently large $j$  
$$C_0\inv\  \| H^s_{(\xi,x)}( y_j)\| \le \| H^u_{(\xi,x)}( z_j )\|\le    C_0 \ \| H^s_{(\xi,x)}( y_j)\|.$$ 
\end{lemma}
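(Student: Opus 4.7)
The estimate is purely a Riemannian-geometric comparison on a compact set where everything varies continuously; there is no dynamics involved beyond what is already recorded in the choice of $K$ and $\Lambda$. The plan is to bound each of $\|H^s_{(\xi,x)}(y_j)\|$ and $\|H^u_{(\xi,x)}(z_j)\|$ in terms of the ambient Riemannian distance $d(x, y_j) \to 0$, and to exploit the uniform transversality of the stable and unstable bundles on $K$.

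\textbf{Step 1: Affine parameters vs.\ Riemannian length.} Because $(\xi,x)\in K$, the density $\rho_{(\xi,x)}$ used to define $H^s_{(\xi,x)}$ on $\locstabM[\hat r]{x}{\xi}$ is uniformly bounded above and below on $K$ (this is part of the Lusin continuity/boundedness used in choosing $K$). The same holds for $H^u_{(\xi,x)}$ on $\locunstM[\hat r]{x}{\xi}$. Writing $d_s, d_u$ for intrinsic distance inside the local stable and unstable manifolds, we therefore get constants depending only on $K$ such that
\[
\|H^s_{(\xi,x)}(y_j)\| \asymp d_s(x, y_j), \qquad \|H^u_{(\xi,x)}(z_j)\| \asymp d_u(x, z_j).
\]
Since $y_j \to x$ along $\locstabM[\hat r]{x}{\xi}$ and $z_j \to x$ along $\locunstM[\hat r]{x}{\xi}$ with uniformly bounded geometry (graphs over $E^{s/u}_\xi(x)$ with Lipschitz constants controlled on $K$), the intrinsic distances $d_s(x,y_j), d_u(x,z_j)$ are in turn comparable to the ambient distances $d(x,y_j), d(x,z_j)$. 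So it suffices to prove $d(x,z_j)\asymp d(x,y_j)$ with constants uniform in $j$.

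\textbf{Step 2: Uniform transversality on $K$.} Since the maps $(\xi,x)\mapsto E^s_\xi(x)$ and $(\xi,x)\mapsto E^u_\xi(x)$ are continuous on the compact set $K$, there is $\theta_0>0$ so that $\angle(E^s_\eta(p), E^u_\eta(p))\ge \theta_0$ whenever $(\eta,p)\in K$. In particular, using that $(\eta,x)\in K$ and, for $j$ large, $(\eta,y_j)\in K$ with $y_j\to x$ (Lemma \ref{lem:y}), continuity gives
\[
\angle\bigl(E^s_\eta(y_j),\, E^u_\eta(x)\bigr)\ \longrightarrow\ \angle\bigl(E^s_\eta(x),\, E^u_\eta(x)\bigr)\ \ge\ \theta_0,
\]
so the angle is uniformly bounded below for all large $j$. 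Recall also $E^u_\eta(x)=E^u_\xi(x)$.

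\textbf{Step 3: Planar computation in exponential coordinates.} Work in the chart $\exp_x\colon T_xM\to M$ for $j$ so large that $y_j, z_j$ lie well inside its injectivity radius. In this chart:
\begin{itemize}
\item $\locunstM[\hat r]{x}{\xi}=\locunstM[\hat r]{x}{\eta}$ is a $C^{1,1}$ graph through $0$ tangent to $E^u_\xi(x)$, with Lipschitz constant uniform on $K$;
\item $\locstabM[\hat r]{y_j}{\eta}$ is a $C^{1,1}$ graph through $v_j:=\exp_x^{-1}(y_j)$ tangent at $v_j$ to (the parallel transport of) $E^s_\eta(y_j)$, with uniform Lipschitz constant.
\end{itemize}
By construction $\exp_x^{-1}(z_j)$ is the unique intersection of these two graphs inside the chart. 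Since $\|v_j\|\asymp d(x,y_j)\to 0$ and both curves are nearly straight lines through $0$ and $v_j$ respectively, meeting at angle $\ge \theta_0/2$ for large $j$ (by Step 2), an elementary plane-geometry computation gives
\[
\tfrac 1 2 \sin(\theta_0/2)\,\|v_j\|\ \le\ \|\exp_x^{-1}(z_j)\|\ \le\ \tfrac{2}{\sin(\theta_0/2)}\,\|v_j\|,
\]
i.e.\ $d(x,z_j)\asymp d(x,y_j)$ with constants depending only on $\theta_0$ and the uniform $C^{1,1}$-bounds on $K$. Combining with Step 1 yields the lemma.

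\textbf{Main obstacle.} None of the steps is serious; the whole lemma is a soft compactness/continuity argument. The only thing requiring a moment of care is making sure the relevant angle bound in Step 2 is between $E^s_\eta(y_j)$ and $E^u_\xi(x)=E^u_\eta(x)$, and not between $E^s_\xi(x)$ and $E^s_\eta(x)$ (for which we have no lower bound and indeed no need of one). This is where it matters that $z_j$ was defined as the intersection of the unstable leaf of $\xi$ through $x$ with the \emph{stable leaf of $\eta$ through $y_j$}, so that the transversality is internal to the Oseledec splitting of $\eta$ at points of $K$.
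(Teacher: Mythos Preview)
Your upper bound is fine, but the lower bound has a genuine gap. In Step~3 you assert that if two nearly-straight curves meet at angle $\ge\theta_0/2$ then the distance from the origin to their intersection is bounded below by a multiple of $\|v_j\|$. This is false: take the $u$-line to be the $x$-axis and the second line to pass through $v_j=(0,1)$ at angle $\theta$ to the $x$-axis; the intersection is at $(\sin(\theta-\tfrac{\pi}{2})/\sin\theta,\,0)$, which vanishes when $\theta=\tfrac{\pi}{2}$ even though the curves meet at a right angle. More generally, if $\phi$ is the angle of $v_j$ (i.e.\ of $E^s_\xi(x)$) to the $u$-axis and $\theta$ is the angle of the $\eta$-stable direction to the $u$-axis, the linear model gives $\|z_j\|/\|v_j\| = |\sin(\theta-\phi)|/|\sin\theta|$. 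Your Step~2 bounds $|\sin\theta|$ from below (giving the upper estimate), but the lower estimate needs $|\sin(\theta-\phi)|$ bounded below, i.e.\ a definite angle between $E^s_\xi(x)$ and $E^s_\eta(x)$.

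That angle \emph{is} available---it is exactly condition~\ref{item1:2} in the choice of $\xi,\eta,x$, coming from Lemma~\ref{lem:VFdichot} and the standing hypothesis that $(\xi,x)\mapsto E^s_\xi(x)$ is not $\F$-measurable---so the proof is easily repaired. But your ``Main obstacle'' paragraph says precisely the opposite: that you have ``no lower bound and indeed no need of one'' for $\angle(E^s_\xi(x),E^s_\eta(x))$. You do need it, and the paper's proof uses it: the cone $\C$ there is chosen to avoid \emph{both} coordinate axes, encoding simultaneously the Oseledec transversality $E^s_\eta\neq E^u_\eta$ and the hypothesis $E^s_\eta(x)\neq E^s_\xi(x)$. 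Once you add this second transversality to Step~2, your argument and the paper's are essentially the same.
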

 
	\begin{figure}[h]
							\psscalebox{1 } 
							{
							\psset{unit=.7cm}
							\begin{pspicture}(1,-3.)(9,3.9)
							\def\dsz{5pt}
							\def\lwd{1.5pt}
							\def\labsizea{\small}
							\def\labsizeb{\small}
							
							\psbezier[linecolor=black, linewidth=\lwd](1.4629215,3.1114335)(1.6629214,2.4314334)(1.7429215,1.9314334)(1.8629215,1.3114334)(1.9829215,0.6914334)(1.9629215,-0.56856656)(1.8629673,-1.2977484)(1.7629215,-2.0085666)(1.6829214,-2.5885665)(1.2629215,-3.0885665)
							\psbezier[linecolor=black, linewidth=\lwd](6.4629216,3.1114335)(6.3429213,2.5318358)(6.1829214,2.0703642)(6.2029214,1.0849818)(6.2229214,0.09959931)(6.382921,-0.46366644)(6.4629216,-1.2885666)(6.5429215,-2.1418538)(6.4829216,-2.6400597)(6.2629213,-3.1085665)
							\psbezier[linecolor=darkgray,linestyle=dashed, linewidth=\lwd](9.062922,-2.0885665)(8.262921,-1.9285666)(7.862921,-1.8885666)(6.4629216,-1.2885666)(5.0629215,-0.68856657)(2.8629215,0.7114334)(1.8629215,1.3114334)(0.8629215,1.9114335)(0.84292144,1.9714334)(0.042921446,2.5714333)

							\psbezier[linecolor=black, linewidth=\lwd](0.0033587317,-1.4236704)(0.7227816,-1.3011878)(0.86325127,-1.2739145)(1.8629673,-1.2977484)(2.862683,-1.3215824)(5.373685,-1.370877)(6.462944,-1.2831012)(7.5522027,-1.1953255)(8.361149,-0.93571675)(9.040381,-0.75355273)
							
							\psbezier[linecolor=darkgray,linestyle=dashed, linewidth=\lwd]
							(4.4629677,-2.3885665)
							(3.6629667,-1.9285666)
							(3.2629667,-1.8885666)
							(1.8629673,-1.2977484)
							(1.8629673,-1.2977484)
							(1, -.8)
							(0.0033587317,-.4236704)

							\psdots[linecolor=black, dotsize=\dsz](1.8629673,-1.2977484)
							\psdots[linecolor=black, dotsize=\dsz](6.4629216,-1.2885666)
							\psdots[linecolor=black, dotsize=\dsz](1.8629215,1.3114334)
							
							\uput[45](1.8629673,-1.2977484){\labsizea$x$}
							\uput[45](6.4629216,-1.2885666){\labsizea$y_j$}
							\uput[45](1.8629215,1.3114334){\labsizea$z_j$}
							
							\uput[0](9.040381,-0.75355273){\labsizeb $W^s_\xi(x)$}
							\uput[-15](9.062922,-2.0885665){\labsizeb$W^s_\eta(y_j)$}
							\uput[-15](4.4629677,-2.3885665){\labsizeb$W^s_\eta(x)$}
							\uput[90](6.4629216,3.1114335){\labsizeb$W^u_\xi(y_j) =W^u_\eta(y_j)$ }
							\uput[90](1.4629215,3.1114335){\labsizeb$W^u_\xi(x) =W^u_\eta(x)$ }
							
							\end{pspicture}
							}
							\caption{Choice of $z_j$}\label{fig:1}
							\end{figure}
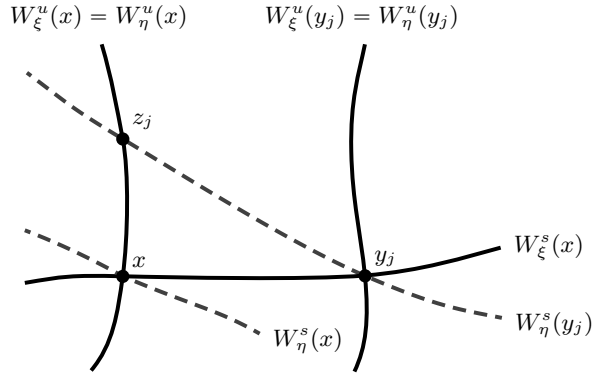

\def\C{\mathcal C}
\begin{proof}
Identify $M$ with $M_\xi$ and choose a coordinate map $\Psi \colon x\in U\subset M\to \R^2$ and $r$ small enough so that 
\begin{itemize}
\item $\Psi(x) = 0$, 
\item  $\Psi(\locstabM x \xi )$ and $\Psi(\locunstM  x \xi )$ are contained in the coordinate axes, and   
 \item $\Psi(\locstabM [r] x \eta)$  
 is contained in the interior of a closed cone $\C$ that intersects the axes only in the origin.  
\end{itemize}
By the $C^1$-continuity of the local stable manifolds on $K\cap M_\eta$, for sufficiently large $j$ $\Psi(\locstabM [r] {y_j} \eta)$ is contained in the interior of the cone $ \Psi(y_j)+\C $.  

From trigonometry there is a $\td C$ with
$$ \td C\inv d(0, \Psi(y_j)) \le 
d(0, \Psi(\locstabM [r] {y_j} \eta )\cap \Psi(\locunstM [r] {x} \eta)\le
\td C d(0, \Psi(y_j)) $$
From the bi-Lipschitz bounds on $\Psi$ and on $ H^s_{(\xi,x)}$ and $H^u_{(\xi,x)}$ restricted to local manifolds $\locstabM x \xi$ and $ \locunstM x \xi$, the estimates follow in the affine coordinates.
\end{proof}

We establish some conventions 
for the remainder.  (See Figures \ref{fig:1} and  \ref{fig:2}.)
\begin{definition} 
For $j\in \good$ satisfying Lemmas \ref{lem:y} and \ref{lem:riemGeom}  define 
$q_j := (\xi, y_j)$,
  $\ponej : = F^{\tau_{1,\epsilon}(j)}({(\xi,x)})$,
 $\qonej : = F^{\tau_{1,\epsilon}(j)}({\xi, y_j})$,
 $\ptwoj : = F^{\tau_{2,\epsilon}(j)}({\eta, x})$,
 $\qtwoj : = F^{\tau_{2,\epsilon}(j)}({\eta, y_j})$,  $\zj: =\locstabM {y_j} \eta \cap \locunstM x \xi$,  
 $\wj:= \cocycle [\eta][{\tau_{2,\epsilon}(j)}](z_j)$, and $\oj  =  F^{\tau_{2,\epsilon}(j)}({\eta, z_j}).$
\end{definition}

We  establish controls on  the growth of certain quantities.  
\begin{lemma}\label{lem:boundsonaffine}
There are $C_2>0$ and $C_3>0$ such that for all sufficiently large $j\in \good$ 
\begin{enumerate}[label={(\alph*)}, font=\normalfont]
\item \label{item5:1}
	$C_3\inv\epsilon\le \| H^u_{\ptwoj}(\wj)\| \le C_3\epsilon  $
\item \label{item5:2}
	$\displaystyle{C_2\inv\le  \left\| \restrict {D F^{\tau_{1,\epsilon}(j)}}{E^u({(\xi,x)})} \right\|\    \left\| \restrict {D F^{\tau_{2,\epsilon}(j)}}{E^u({\eta, x})} \right\|\inv  \le C_2 }$
\end{enumerate}
and if $\epsilon<\hat r/C_3$
\begin{enumerate}[resume, label={(\alph*)}, font=\normalfont]
\item \label{item5:3}	
	$\displaystyle{C_2\inv\le \left\| \restrict{ D F^{\tau_{1,\epsilon}(j)}}{E^u({\xi, y_j})}\right \|   \left \| \restrict {DF^{\tau_{2,\epsilon}(j)}}{E^u({\eta, y_j})}\right\| \inv \le C_2 }$
\end{enumerate}
\end{lemma}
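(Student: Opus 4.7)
The plan is to establish (a) from the intertwining property of the unstable affine parameter with the differential, derive (b) directly from the defining relation of the stopping times, and reduce (c) to (b) via the bounded distortion estimates of Lemma \ref{lem:standardcrap}.

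For part (a), the key identity is the unstable analogue of Proposition \ref{prop:Stabman}(1). Applied at the base point $(\eta,x)$ with $n=\tau_{2,\epsilon}(j)$ and evaluated at $z_j\in W^u_\eta(x)$ it gives
\[ H^u_{\ptwoj}(\wj)\;=\; D_{x}f_\eta^{\tau_{2,\epsilon}(j)}\cdot H^u_{(\eta,x)}(z_j). \]
Since $\xi$ and $\eta$ share the same past dynamics we have $H^u_{(\eta,x)}=H^u_{(\xi,x)}$, and Lemmas \ref{lem:riemGeom} and \ref{lem:y}\ref{item6:1} give $\|H^u_{(\xi,x)}(z_j)\|\asymp r_j$. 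For the derivative factor I would use the definition of $\tau_{2,\epsilon}$, namely $\lyap{DF^{\tau_{2,\epsilon}(j)}|_{E^u(x,\eta)}}\asymp \epsilon\,\lyap{DF^{-j}|_{E^s(x,\xi)}}$, together with the uniform equivalence of Lyapunov and Riemannian norms on the compact Lusin set $K$. Because $j\in\M\cap \good_3$ places $F^{-j}(\xi,x)$, $(\eta,x)$, and $\ptwoj$ all in $K$, this equivalence converts the Lyapunov-norm identity into $\|D_x f_\eta^{\tau_{2,\epsilon}(j)}|_{E^u}\|\asymp \epsilon/r_j$. Multiplying the two estimates produces (a).

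Part (b) is the same computation performed in the fiber over $\xi$: using $j\in \good_4$ to place $\ponej\in K$ gives $\|D_x f_\xi^{\tau_{1,\epsilon}(j)}|_{E^u}\|\asymp \epsilon/r_j$. Since the defining inequalities of $\tau_{1,\epsilon}$ and $\tau_{2,\epsilon}$ share the common stable denominator $\lyap{DF^{-j}|_{E^s(x,\xi)}}$, the two Riemannian derivative norms at $x$ are comparable with uniform constants, yielding (b).

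For part (c) I reduce each of the two derivatives to its counterpart at $x$ by bounded distortion. In the $\xi$-fiber, $y_j\in W^s_\xi(x)$ forces $W^s_\xi(y_j)=W^s_\xi(x)$ so the local Pesin intersection point of Lemma \ref{lem:standardcrap} is $z=x$, and item (2) gives $\|D_{y_j} f_\xi^{\tau_{1,\epsilon}(j)}|_{E^u}\|\asymp \|D_x f_\xi^{\tau_{1,\epsilon}(j)}|_{E^u}\|$. In the $\eta$-fiber the analogous intersection is exactly $z_j$ (by the definition of $z_j$), giving $\|D_{y_j} f_\eta^{\tau_{2,\epsilon}(j)}|_{E^u}\|\asymp \|D_{z_j} f_\eta^{\tau_{2,\epsilon}(j)}|_{E^u}\|$. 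It remains to compare $\|D_{z_j} f_\eta^{\tau_{2,\epsilon}(j)}|_{E^u}\|$ with $\|D_{x} f_\eta^{\tau_{2,\epsilon}(j)}|_{E^u}\|$; for this I would apply Lemma \ref{lem:standardcrap}(1) in the image fiber $\theta^{\tau_{2,\epsilon}(j)}(\eta)$ to the pair $\ptwoj,\qtwoj\in K$, whose local Pesin intersection is $\oj$. Inverting the resulting backward-iterate estimate in this one-dimensional setting converts it into the forward bound needed. The hypothesis $\epsilon<\hat r/C_3$ enters precisely here: by part (a), $\wj\in W^u_{\hat r}(\ptwoj)$, and combined with the stable contraction of the pair $(y_j,z_j)$ under $f_\eta^{\tau_{2,\epsilon}(j)}$ this guarantees $d(\pi_M\ptwoj,\pi_M\qtwoj)<\hat\gamma$ so that the lemma is applicable. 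Chaining the three comparisons with (b) gives (c).

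The principal obstacle is bookkeeping: one must verify at each step that the reference points actually lie in the compact Lusin set $K$ (so that both the Lyapunov/Riemannian equivalence and the distortion constant $C_1$ are uniform) and that the relevant pairs are closer than the scale $\hat\gamma$ governing Lemma \ref{lem:standardcrap}. The choice of $\good$ and the selection of $y_j$ in Lemma \ref{lem:y} were designed precisely to supply these recurrence properties, while the scale constraint $\epsilon<\hat r/C_3$ is the final condition ensuring that the forward-image points remain in the local Pesin charts at $\ptwoj$.
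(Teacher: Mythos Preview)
Your argument is correct and matches the paper's proof: part (a) via the affine-parameter intertwining identity combined with Lemma \ref{lem:riemGeom} and the Lyapunov/Riemannian norm comparison on $K$, part (b) directly from the stopping-time definitions, and part (c) by the same four-ratio decomposition bounded via Lemma \ref{lem:standardcrap} and part (b). One minor notational slip: where you write $j\in\good_3$ and $j\in\good_4$ you mean $\tau_{2,\epsilon}(j)\in\good_3$ and $\tau_{1,\epsilon}(j)\in\good_4$ (equivalently $j\in\tau_{2,\epsilon}^{-1}(\good_3)$, $j\in\tau_{1,\epsilon}^{-1}(\good_4)$), which is what membership in $\good$ actually provides.
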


\begin{proof}


Let $M_0 = \displaystyle \max_{q\in K} \left\{  \dfrac{  \|\cdot\|_{q}}{  \lyap \cdot  _{q, \eps}},\dfrac{  \lyap \cdot  _{q, \eps}}{  \|\cdot\|_{q}} \right\}$. 
Write $p = (\xi,x)$.  

We have $N_0\inv \le \| H^s_{F^{-j}(p)}(\cocycle[\xi][-j] ({y_j}))\|\le 1 $ whence we derive the sequence of bounds
\begin{align*}
M_0^{-1}N_0\inv &\le \lyap{ H^s_{F^{-j}(p)}(\cocycle[\xi][-j] ({y_j}))}_{\eps}\le M_0\\
M_0^{-1}N_0\inv\lyap{\restrict{DF^{-j}} {\Es x \xi}}_{\eps} \inv
& \le \lyap{H^s_{p}(y_j)}_{\eps}
\le M_0 \lyap{\restrict{DF^{-j}} {\Es x \xi}}_{\eps} \inv \\
M_0^{-2}N_0\inv\lyap{\restrict{DF^{-j}} {\Es x \xi}}_{\eps} \inv
 &\le \|H^s_{p}(y_j)\|
\le M_0^2 \lyap{\restrict{DF^{-j}} {\Es x \xi}}_{\eps} \inv \\
C_0\inv M_0^{-2}N_0\inv\lyap{\restrict{DF^{-j}} {\Es x \xi}}_{\eps} \inv
 &\le \|H^u_{p}(\zj)\| =\|H^u_{(\eta,x)}(\zj)\|
\le C_0M_0^2 \lyap{\restrict{DF^{-j}} {\Es x \xi}}_{\eps} \inv \\
C_0\inv M_0^{-3}N_0\inv\lyap{\restrict{DF^{-j}} {\Es x \xi}}_{\eps} \inv
& \le \lyap{H^u_{(\eta,x)}(\zj)}_{\eps}
\le C_0M_0^3 \lyap{\restrict{DF^{-j}} {\Es x \xi}}_{\eps} \inv.
\end{align*}
We thus have \begin{align*} 
C_0\inv M_0^{-3}&N_0\inv \lyap{\restrict{DF^{\tau_{2, \epsilon}(j)}}{\Eu x \eta}}_{\eps} 
\lyap{\restrict{DF^{-j}} {\Es x \xi}}_{\eps} \inv
 \\&  \le \lyap{H^u_{\ptwoj }( \cocycle [\eta][{\tau_{2,\epsilon}(j)}](z_j))}_{\eps} 
 	\le C_0M_0^3 \lyap{\restrict{DF^{\tau_{2, \epsilon}(j)}}{\Eu x \eta}}_{\eps} 
	\lyap{\restrict{DF^{-j}} {\Es x \xi}}_{\eps} \inv.
\end{align*}
from which we conclude 
$$C_0\inv M_0^{-4}N_0\inv e^{-a} \epsilon
  \le \|H^u_{\ptwoj}(\wj))\|
 \le C_0M_0^4 \epsilon.$$
 This proves \ref{item5:1} with $C_3 = C_0M_0^4N_0e^a$.

For \ref{item5:2}, we have, in the Lyapunov norms,
$$ e^{-a} \epsilon\le \lyap{\restrict{DF^{\tau_{1,\epsilon}(j)}}{\Eu x \xi}}_{\eps}  \lyap{\restrict{DF^{-j}} {\Es x \xi}}_{\eps} \inv\le \epsilon$$
and $$
e^{-a} \epsilon\le \lyap{\restrict{DF^{\tau_{2,\epsilon}(j)}}{\Eu x \eta}}_{\eps}  \lyap{\restrict{DF^{-j}} {\Es x \xi}}_{\eps} \inv\le \epsilon
$$
whence 
$$e^{-a} \le \lyap{\restrict{DF^{\tau_{1,\epsilon}(j)}}{\Eu x \xi}}_{\eps}  \lyap{\restrict{DF^{\tau_{2,\epsilon}(j)}}{\Eu x \eta}}_{\eps}\inv \le e^a.$$
Converting  to the  Riemannian norm we have
$$e^{-a}M_0^{-4} \le \|\restrict{DF^{\tau_{1,\epsilon}(j)}}{\Eu x \xi}\| \  \|\restrict{DF^{\tau_{2,\epsilon}(j)}}{\Eu x \eta}\|\inv \le e^aM_0^4.$$

To prove \ref{item5:3} we expand 
$ \left\| \restrict{ D F^{\tau_{1,\epsilon}(j)}}{\Eu{ y_j} \xi} \right\|\ \left \| \restrict {DF^{\tau_{2,\epsilon}(j)}}{\Eu{ y_j} \eta}\right\| \inv $ into the equivalent product  

\begin{align*}
\dfrac{\|\restrict{ D F^{\tau_{1,\epsilon}(j)}}{\Eu{ y_j} \xi}\| }
			{\|\restrict{ D F^{\tau_{1,\epsilon}(j)}}{\Eu{ x} \xi}\| }
\dfrac{\|\restrict{ D F^{\tau_{1,\epsilon}(j)}}{\Eu{ x} \xi}\| }
			{\|\restrict{ D F^{\tau_{2,\epsilon}(j)}}{\Eu{ x} \eta}\| }
\dfrac{\|\restrict{ D F^{\tau_{2,\epsilon}(j)}}{\Eu{ z_j} \eta }\| \inv}
			{\|\restrict{ D F^{\tau_{2,\epsilon}(j)}}{\Eu{ x} \eta}\| \inv}
\dfrac{\|\restrict{ D F^{\tau_{2,\epsilon}(j)}}{\Eu{ z_j} \eta }\| }
{ \| \restrict {DF^{\tau_{2,\epsilon}(j)}}{\Eu{ y_j} \eta}\|}.
\end{align*}

For $\epsilon>0$ such that $C_3 \epsilon<\rhere$, we have upper and lower bounds---uniform in $j\in \good$---on each ratio: 
 The bounds on the first and forth ratios  follow from Lemma \ref{lem:standardcrap}(\ref{item8:2}),  the second  from \ref{item5:2}, and the third from 
\ref{item5:1} and Lemma \ref{lem:standardcrap}(\ref{item8:1}).  
\end{proof}

\subsection{Step \step: Construction of $G_\epsilon$}
For the remainder, consider any $0<\epsilon\le \frac {1}{2 C_3} \min\{\hat r, 1\}$.  
 Then for $j\in \good$ large enough, we have $\oj \in \locunstp[1]{\ptwoj}$ and all bounds in the above lemmas hold.  
We define  $G_\epsilon\subset K $ to be the set of accumulation points of $\{\ptwoj\}_{j\in \good}$:
	$$G_\epsilon:= \bigcap_{M\to \infty}\overline{\{ \ptwoj \mid j\in \good \cap [M,\infty)\} }=  \bigcap_{M\to \infty} \overline{\{ F^{\ell}(\eta,x) \mid \ell\in \tau_{2,\epsilon}(\good  \cap [M,\infty))\}  }.$$
We note   by the construction of $\good$ that $G_\epsilon \subset K$ and is non-empty.  Furthermore, using that $(\eta,x) \in  \rec (O)$ for any $O\in \U^*$,  by Lemmas \ref{lem:jointreturns} and  \ref{lem:measureofaccumulation} and \eqref{eq:delta0} we have
$$\mu(G_\epsilon) \ge \bar d(\tau_{2,\epsilon}(\good)) \ge\delta_0.$$

We show that Lemma  \ref{lem:main} holds with $G_\epsilon$ as defined above.  
Let $$C_4:= \sup_{p\in K} \sup _{q\in \locunstp [1] p} \| D_0H^u_q\circ (H^u_p)\inv\|.$$  Then the constant $M$ in  Lemma \ref{lem:main} is given by 
	$$M:= C_3C_4C_2^2.$$  We note this is independent of $\epsilon$.  
 
\begin{lemma}
Let $p\in G_\epsilon$.  Then there is 
an affine map 
	$$\psi\colon \R\to \R$$ with 
	\begin{enumerate}
	\item $\dfrac{1}{M}\le | D\psi|\le {M}$;
	\item $\dfrac{\epsilon}{M} \le |\psi(0)|\le M \epsilon$;
	\item $\psi_*\bar \mu_{p}\simeq \bar\mu_{p}$. 
	\end{enumerate}
\end{lemma}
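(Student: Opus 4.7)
The plan is to realize $\psi$ as a limit of explicit affine maps coming from three complementary comparisons among the measures $\bar\mu_{p_j^2}, \bar\mu_{o_j}, \bar\mu_{p_j^1}, \bar\mu_{q_j^1}, \bar\mu_{q_j^2}$, and then to close up by exploiting the stable-manifold collisions forced by the definitions of the return times $\tau_{1,\epsilon}$ and $\tau_{2,\epsilon}$.

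First, by the compactness of $K$ and because $p_j^1, p_j^2, o_j \in K$ for all sufficiently large $j \in \good$, I would pass to a subsequence (still denoted $j$) along which $p_j^2 \to p$ (possible since $p \in G_\epsilon$), $p_j^1 \to p''$, and $o_j \to p'$ for some $p', p'' \in K$. Because $z_j \in W^s_\eta(y_j)$ and $\tau_{2,\epsilon}(j) \to \infty$, the fiber-wise stable distance satisfies $d(q_j^2, o_j) \to 0$, whence $q_j^2 \to p'$ as well; similarly $y_j \in W^s_\xi(x)$ and $\tau_{1,\epsilon}(j) \to \infty$ give $q_j^1 \to p''$. The continuity of $q \mapsto \bar\mu_q$ on $K$ arranged in Step 1 then yields $\bar\mu_{p_j^2} \to \bar\mu_p$, $\bar\mu_{p_j^1}, \bar\mu_{q_j^1} \to \bar\mu_{p''}$, and $\bar\mu_{o_j}, \bar\mu_{q_j^2} \to \bar\mu_{p'}$ in the vague topology on locally-finite Borel measures on $\R$.

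Next I would extract three affine identifications. The intertwining in Proposition \ref{prop:Stabman}(1), together with the equivariance of the trivialization $\I$ under $DF|_{\E^u}$, shows that for any $q \in X_0$ and $n \in \Z$ the measure $(\I \circ H^u)_*(F^n)_*\mu^u_q$ equals $\bar\mu_{F^n(q)}$ precomposed with multiplication by the signed magnitude $\pm\|DF^n|_{E^u(q)}\|$, up to a positive normalization. Applied at $q = (\xi,x) = (\eta,x)$ (using $\bar\mu_{(\xi,x)} = \bar\mu_{(\eta,x)}$ from Step 2) with exponents $\tau_{1,\epsilon}(j)$ and $\tau_{2,\epsilon}(j)$, this gives a scaling $\bar\mu_{p_j^1} \simeq (t\mapsto s'_j t)_*\bar\mu_{p_j^2}$ with $|s'_j| \in [C_2^{-1},C_2]$ by Lemma \ref{lem:boundsonaffine}; after refining the subsequence so that $s'_j \to s'$, we obtain $\bar\mu_{p''} \simeq (t\mapsto s't)_*\bar\mu_p$. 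The identical argument at $y_j$ (using $\bar\mu_{(\xi,y_j)} = \bar\mu_{(\eta,y_j)}$) yields $\bar\mu_{p'} \simeq (t\mapsto st)_*\bar\mu_{p''}$ with $|s| \in [C_2^{-1},C_2]$. Finally, since $o_j \in W^u(p_j^2)$, Proposition \ref{prop:Stabman}(3) makes $H^u_{o_j} \circ (H^u_{p_j^2})^{-1}$ an affine map with constant derivative $\rho_{o_j}(p_j^2)$; transporting through $\I$ produces an affine $\psi_j \colon \R \to \R$ with $\bar\mu_{o_j} \simeq (\psi_j)_*\bar\mu_{p_j^2}$, derivative $|D\psi_j| \in [C_4^{-1},C_4]$ uniformly on $K$, and shift $|\psi_j(0)| = |\rho_{o_j}(p_j^2)|\cdot|\I_{p_j^2}(H^u_{p_j^2}(o_j))| \in [\epsilon/(C_3 C_4), C_3 C_4\epsilon]$ by Lemma \ref{lem:boundsonaffine}. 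A further subsequence forces $\psi_j \to \psi$ in the space of affine maps with these bounds, and $\bar\mu_{p'} \simeq \psi_*\bar\mu_p$.

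Chaining the three identifications gives $\psi_*\bar\mu_p \simeq (t \mapsto ss't)_*\bar\mu_p$, so the map $\tilde\psi(t) := (ss')^{-1}\psi(t)$ is affine and satisfies $\tilde\psi_*\bar\mu_p \simeq \bar\mu_p$. Its derivative is $(ss')^{-1}$ times that of $\psi$, hence lies in $[M^{-1},M]$ with $M := C_2^2 C_3 C_4$, and $|\tilde\psi(0)| = (|s||s'|)^{-1}|\psi(0)| \in [\epsilon/M, M\epsilon]$; crucially, $M$ is independent of $\epsilon$. This $\tilde\psi$ is the affine map required by the statement. The main obstacle is that the three affine comparisons hold initially only up to the unknown scalars $s'_j, s_j$, whose genuine limits $s, s'$ must be shown to exist rather than merely being bounded subsequentially; this is resolved by extracting successive subsequences at each stage, using the uniform bounds of Lemma \ref{lem:boundsonaffine} and the continuous dependence of all relevant structures on basepoints in $K$ arranged in Step 1.
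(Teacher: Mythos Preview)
Your proposal is essentially the paper's own argument, reorganized as three affine comparisons (at $x$, at $y_j$, and along $W^u$) rather than the paper's two ($\Phi$ and $\alpha/\beta$); the final map $\tilde\psi=(ss')^{-1}\psi$ coincides with the paper's $(\lambda_{\alpha/\beta})^{-1}\circ\Phi$, with $s'\leftrightarrow\alpha$, $s\leftrightarrow\beta^{-1}$, and your $\psi$ playing the role of $\Phi$.

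Two small points to clean up. First, you assert $o_j\in K$, but this is not guaranteed: only $p_j^1,q_j^1,p_j^2,q_j^2$ lie in $K$ by construction, while $o_j=F^{\tau_{2,\epsilon}(j)}(\eta,z_j)$ is merely on $\locunstp[1]{p_j^2}$. The fix is exactly what the paper does: since $q_j^2\in K$ converges to $p'$ and $d(o_j,q_j^2)\to 0$, one has $p'\in K\cap \locunstp[1]{p}$, so you may define the affine change of coordinates $\psi$ \emph{directly} at the limit pair $(p,p')$ rather than as a limit of $\psi_j$'s built at $o_j$ (where the trivialization $\I_{o_j}$ is not controlled). Second, passing the relation $\simeq$ through vague limits requires the normalization argument the paper spells out (introducing constants $c_j,d_j$ and using that intervals are continuity sets because the $\bar\mu$'s are atomless); you should make this explicit rather than asserting, e.g., that $\bar\mu_{p''}\simeq(t\mapsto s't)_*\bar\mu_p$ follows immediately from $\bar\mu_{p_j^1}\simeq(t\mapsto s_j't)_*\bar\mu_{p_j^2}$.
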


\begin{proof}

\def\oddlim{\lim_{j\in B \to \infty}}
We have each $\ptwoj$ and $\qtwoj$ is contained in the compact set $K$.  Let $p\in G_\epsilon$ be an accumulation point of $\{\ptwoj\}$.
We may restrict to an infinite subset $B\subset \good \subset \N_0$ such that 
$\displaystyle{\oddlim}\ptwoj = p$
and such that the sequence $\{\qtwoj\}_{j\in B}$ converges. Let 
$$q=  \displaystyle{\oddlim} \qtwoj = \displaystyle{\oddlim} \oj.$$ 
Note that $q \in K$ 
 and by Lemma \ref{lem:boundsonaffine}\ref{item5:1},  $q\in \locunstp[1] {p}$.  See Figure \ref{fig:2}.

\begin{figure}[h]
\def\dsz{5pt}
\def\lwd{1.5pt}
\def\lwda{.8pt}
\def\framecolor{black}
\def\labsizea{\small}
\def\labsizeb{\large}
\def\labsizec{\tiny}
\centering
\begin{minipage}{.5\textwidth}
  \centering
\psscalebox{1.0 1.0} 
{
\psset{unit=.8cm}
\begin{pspicture}(0,-4.4)(5.082738,4.3)

\psbezier[linecolor=\framecolor, linewidth=\lwda](0.296937,-4.0879235)(0.27284434,-4.0986233)(1.084102,-3.175938)(1.7426844,-3.0102384)(2.4012668,-2.8445392)(2.4577107,-2.4845803)(2.5130274,-2.5284376)(2.5683439,-2.572295)(2.2141259,-1.9009148)(2.3439364,0.33267576)(2.473747,2.5662663)(2.1978126,4.2753954)(2.206798,4.276514)(2.2157838,4.2776327)(2.0073109,3.8254263)(1.2237635,3.6071656)(0.4402161,3.388905)(0.055362187,3.370336)(0.022785649,3.3864863)(-0.009790889,3.4026365)(0.26676044,1.9354573)(0.11684643,-0.4872773)(-0.033067577,-2.910012)(0.32102966,-4.0772233)(0.296937,-4.0879235)
\psbezier[linecolor=\framecolor, fillstyle=solid, linewidth=\lwda](1.4769369,-4.1679235)(1.4528444,-4.179189)(2.244102,-3.5920427)(2.8826845,-3.3239274)(3.5212667,-3.0558121)(4.277711,-3.0917156)(4.3330274,-3.1378915)(4.388344,-3.1840675)(4.5567346,-1.1729321)(4.358719,0.5452335)(4.1607037,2.2633991)(4.1935735,4.451423)(4.202559,4.452601)(4.2115445,4.4537787)(3.742311,3.777833)(3.0812635,3.4466896)(2.420216,3.115546)(1.1753622,3.1328232)(1.1427857,3.1498275)(1.1102091,3.1668315)(1.4467604,1.769488)(1.3968464,-0.8234444)(1.3469324,-3.4163768)(1.5010296,-4.1566577)(1.4769369,-4.1679235)

\psbezier[linecolor=black, linewidth=\lwd](0.71798676,2.9680867)(0.635195,2.5242352)(0.6284979,2.3625515)(0.65798676,1.6386243)(0.68747556,0.9146972)(0.835195,-0.8397075)(0.8179867,-1.4634545)(0.8007785,-2.0872016)(0.7874756,-2.4492455)(0.6379868,-2.6519134)
\psbezier[linecolor=black, linewidth=\lwd](2.060209,2.8880868)(2.150504,2.097337)(2.1875856,1.9387298)(2.1979868,1.5037915)(2.2083879,1.068853)(2.0401337,-0.93380386)(2.0350237,-1.4490945)(2.0299137,-1.9643852)(2.0780175,-2.3255093)(2.2179868,-2.6919134)
\psbezier[linecolor=black, linewidth=\lwd](2.6579866,2.9480867)(2.6847079,2.6618645)(2.7645957,2.1874063)(2.6839328,1.4789958)(2.6032696,0.7705853)(2.5976808,-1.0689656)(2.6666355,-1.5470911)(2.73559,-2.0252168)(2.8027291,-2.2649877)(2.8179867,-2.5119133)
\psbezier[linecolor=black, linestyle=dashed,  dash=3pt 2pt, linewidth=\lwd](1.5779867,1.4680867)(2.3804257,1.3805867)(3.1599379,1.4805866)(3.4579868,1.6680866)

\psdots[dotsize=\dsz]
(0.817,-1.5869133)
(0.66048676,1.5880867)
(2.68,1.4630867)
(2.1979868,1.4255867)
(2.0354867,-1.5119133)

\uput[180](0.817,-1.5869133){\labsizea$p$}
\uput[180](0.66048676,1.5880867){\labsizea$q$}
\uput[210]	(2.1979868,1.4255867)	{\labsizea$\oj$}
\uput{2pt}[-45]	(2.68,1.4630867){\labsizea$\qtwoj$}
\uput[180]	(2.0354867,-1.5119133)	{\labsizea$\ptwoj$}

\uput[0](1.821465,-4.2){\labsizeb $M_{\theta^{\tau_{2,\epsilon}(j)}(\eta)}$}

\uput{1pt}[70](3.4479868,1.6380867){\labsizec$W^s (\qtwoj)$}
\uput{1pt}[315](2.8104868,-2.5244133){\labsizec $W^u(\qtwoj)$}
\uput{1pt}[260](2.2104867,-2.6994133){\labsizec $W^u(\ptwoj)$}
\uput{1pt}[283](0.64798677,-2.6619134){\labsizec $W^u(p)$}
\end{pspicture}
}

\end{minipage}%
\begin{minipage}{.5\textwidth}
  \centering

\psscalebox{1.0 1.0} 
{
\psset{unit=.8cm}
\begin{pspicture}(8,-4.4)(14,4.3)
%
\psbezier[linecolor=\framecolor, linewidth=\lwda](10.39027,-3.9679234)(10.366178,-3.9786234)(11.2441025,-3.3159378)(12.242684,-3.1702385)(13.241267,-3.0245392)(13.997711,-3.3645804)(14.053027,-3.4084377)(14.108344,-3.452295)(13.914125,-1.1009147)(13.693936,1.1051757)(13.473747,3.3112662)(13.347813,4.7378955)(13.356798,4.739014)(13.365784,4.740133)(12.742311,3.9779263)(12.038764,3.7596657)(11.3352165,3.541405)(9.980362,3.702836)(9.947785,3.7189863)(9.915209,3.7351365)(10.16676,1.7154573)(10.496846,0.17272268)(10.826932,-1.3700119)(10.414363,-3.9572232)(10.39027,-3.9679234)
%
\psbezier[linecolor=black, linewidth=\lwd](11.697987,3.2480867)(11.868731,2.9921746)(12.217183,1.6336619)(12.158442,0.43464482)(12.0997,-0.7643723)(12.094607,-0.6210142)(12.092184,-1.1526922)(12.089761,-1.6843702)(12.111489,-1.9847519)(12.212987,-2.4044132)
\psbezier[linecolor=black, linewidth=\lwd](12.244944,3.2967823)(12.505839,2.7131937)(12.782093,1.6692182)(12.679001,0.6361691)(12.57591,-0.39688006)(12.606564,-0.9682708)(12.637335,-1.4267875)(12.668106,-1.8853041)(12.76347,-2.210006)(12.797117,-2.3649569)
\psbezier[linecolor=black, linestyle=dashed, dash=3pt 2pt, linewidth=\lwd](13.517986,0.8400867)(13.270812,0.7100026)(13.121784,0.56392753)(12.782887,0.48012587)(12.44399,0.39632422)(12.106798,0.3762636)(11.409291,0.4865015)
%
\psdots[dotsize=\dsz]
(12.16,0.4)
(12.66,0.45330408)
%
\uput[225]	(12.16,0.4) {\labsizea$\ponej$}
\uput{2pt}[-45](12.66,0.45330408) {\labsizea$\qonej$}
%
%
\uput[0](10.686683,-4.2){\labsizeb $M_{\theta^{\tau_{1,\epsilon}(j)}(\xi)}$}
%
\uput{1pt}[310](12.797987,-2.3869133){\labsizec $W^u(\qonej)$}
\uput{2pt}[120](11.410487,0.4880867){\labsizec $W^s (\ponej)$}
\uput{1pt}[230](12.222987,-2.4369133){\labsizec $W^u(\ponej)$}
\end{pspicture}
}

\end{minipage}
\caption{Proof of Lemma \ref{lem:main}}\label{fig:2}
\end{figure}

Fix $\gamma := d(H^u_{q}\circ (H^u_{p})\inv(t))/dt(0)$ and let $v:= \I_p\circ H^u_{p} (q)$. 
Note that by Lemma \ref{lem:boundsonaffine}\ref{item5:1}, we have $C_3\inv \epsilon \le |v| \le C_3\epsilon.$
Define the map $\Phi\colon \R\to \R$  by $$\Phi\colon t \mapsto \gamma (t- v).$$  By  construction,  we have
\begin{equation}\label{eq:half}\Phi_*\bar \mu_{p} \simeq \bar \mu_{q}.\end{equation}
It remains to relate  the measures $\bar \mu_{q}$  and $\bar \mu_{p}$.  

\def\oddlim{\lim_{j\in B \to \infty}}

Given $\alpha\in \R$, write $\lambda_{\alpha}\colon \R\to \R$ for the 
linear map  $\lambda_\alpha\colon x \mapsto \alpha x$.
Define $\alpha_j, \beta_j\in \R$ so that
	$$\lambda_{\alpha_j} = \I_{\ponej}\circ  \restrict {D F^{\tau_{1,\epsilon}(j)}}{E^u({(\xi,x)})} \circ D\hol_{\eta, \xi}\circ \left(\restrict {D F^{ \tau_{2,\epsilon}(j)}}{E^u({\eta, x})}\right)^{-1}  \circ \I_{\ptwoj}\inv,$$
	$$\lambda_{\beta_j }=  \I_{\qonej} \circ\restrict{ D F^{\tau_{1,\epsilon}(j)}}{E^u({\xi, y_j})} \circ D\hol_{\eta, \xi}\circ \left( \restrict {DF^{\tau_{2,\epsilon}(j)}}{E^u({\eta, y_j})} \right)^{-1}\circ  \I_{\qtwoj}\inv .$$
As before, $\hol_{\eta, \xi}$ denotes the trivial identification between $M_\eta$ and $M_\xi$.  
From  Lemma \ref{lem:boundsonaffine} we 
have 
$$|\alpha_j |\in [C_2\inv, C_2],\quad |\beta_j |\in [C_2\inv, C_2]$$
hence we 
may further restrict the set $B\subset \good \subset \N_0$ so that the limits 
$$\displaystyle{\oddlim} \alpha_j = \alpha, \quad \quad \displaystyle{\oddlim}\beta_j = \beta$$ are defined.

We claim that $(\lambda_\alpha)_*\bar\mu_p\simeq (\lambda_\beta)_*\bar\mu_q.$
 Indeed, we have 
$$(\lambda_{\alpha_j})_* \bar\mu_{\ptwoj} \simeq \bar\mu_{\ponej}, \quad \quad (\lambda_{\beta_j})_* \bar\mu_{\qtwoj} \simeq \bar\mu_{\qonej}.$$
We introduce normalization factors
$$c_j := \bar\mu_{\ptwoj}([-\alpha_j\inv, \alpha_j\inv]) \inv, \quad \quad d_j := \bar\mu_{\qtwoj}([-\beta_j\inv, \beta_j\inv])\inv$$ and 
$$c := \bar\mu_{p}([-\alpha\inv, \alpha\inv])\inv, \quad \quad d := \bar\mu_{q}([-\beta\inv, \beta\inv])\inv.$$
We recall that each $\bar \mu_q$ has no atoms; hence intervals are continuity sets for each $\bar \mu_q$ and thus $c_j\to c$ and $d_j\to d$.  
Let $f$ be a continuous, compactly supported function $f\colon \R \to \R$.  We note that $q\mapsto \bar\mu_q(f)$ is uniformly continuous on $K$ and that
$$\left | (\lambda_{\alpha})_*\bar\mu_{q}(f) 
			-(\lambda_{\alpha_j})_*\bar\mu_{q}(f)  \right| 
=  \left | \int f(\alpha t) - f( {\alpha_j} t)  \ d\bar\mu_{q}(t) \right| $$
approaches zero uniformly in $q$ as $j\in B\to \infty$.
Thus for any $\kappa>0$ and for all sufficiently large $j\in B$ we have 
\begin{itemize}
\item $ | {c}(\lambda_\alpha)_*\bar\mu_{p}(f)-  {c}(\lambda_\alpha)_*\bar\mu_{\ptwoj}(f)| \le \kappa$,
\item $ |  {c}(\lambda_\alpha)_*\bar\mu_{\ptwoj}(f)
			- {c_j}(\lambda_{\alpha_j})_*\bar\mu_{\ptwoj}(f) | \le \kappa$,
			
\item $ | {d}(\lambda_\beta)_*\bar\mu_{q}(f)-  {d}(\lambda_\beta)_*\bar\mu_{\qtwoj}(f)| \le \kappa$,
\item $ |  {d}(\lambda_\beta)_*\bar\mu_{\qtwoj}(f)
			- {d_j}(\lambda_{\beta_j})_*\bar\mu_{\qtwoj}(f) | \le \kappa$,
\item  $|  \bar\mu_{\ponej} (f)- \bar\mu_{\qonej}(f)|\le \kappa$. 
 \end{itemize}
 
 Since
$$ {c_j}(\lambda_{\alpha_j})_*\bar\mu_{\ptwoj}(f)  =  \bar\mu_{\ponej}(f),\quad \quad  {d_j}(\lambda_{\beta_j})_*\bar\mu_{\qtwoj}(f) =  \bar\mu_{\qonej}(f)$$
we conclude 
$c(\lambda_\alpha)_*\bar\mu_{p} = d(\lambda_\beta)_*\bar\mu_{q},$
or $$\bar \mu_{q}\simeq  (\lambda_{\alpha/\beta})_*\bar\mu_p.$$
Combining the above  with \eqref{eq:half}, it follows that map 
$$\psi=(\lambda_{\alpha/\beta})\inv\circ \Phi\colon t \mapsto \dfrac{\beta \gamma}{\alpha} (t-v).$$
satisfies the conclusion of the Lemma.  
\end{proof}

This completes the proof of Lemma \ref{lem:main}.

\section{Proof of Theorem \ref{thm:3}}
We end with the proof of Theorem \ref{thm:3}.   
\begin{proof}
Let $\munaught$ be as in Theorem \ref{thm:3}, and assume that $h_\munaught(\MP^+(M, \nunaught))>0$ and the stable distribution $E^s_\omega(x)$ is non-random.  It follows from Theorem \ref{thm:1} that $\munaught$ is SRB.  
\def\B{\mathcal B}
Let $F\colon \Sigma \times M\to \Sigma \times M$ be the canonical skew product  constructed in Section \ref{sec:skewRDS} and let $\mu$ be the $F$-invariant measure defined by Proposition \ref{prop:mudef}. Then   
the conditional measures of $\mu$ along \ae unstable manifold $\unst \xi x$ for the skew product $F$ are absolutely continuous.  
Define the \emph{ergodic basin}  $B\subset \Sigma\times M$  of $\mu$ to be the set of $(\xi,x) \in X$ such that 
 $$ \lim_{n\to \infty}\tfrac 1 N \sum _{n = 0}^{N-1} \phi(\cocycle  [\xi] [n](x))= \int \phi \ d \munaught$$
 for all $\phi\colon M \to \R$ continuous. 
By the point-wise ergodic theorem and the separability of $C^0(M)$, we have $\mu(B) =1.$
Furthermore, for points $(\xi, x)\in B$  whose fiber-wise stable manifold $\stab x \xi $ is defined 
we have $$\stab x \xi \in B .$$

We have  the following ``transverse'' absolute continuity property.   
Given a typical $\xi\in \Sigma$ and a collection of  
fiber-wise local stable manifolds $\mathcal S:= \{\locstabM x \xi \}_{x\in Q}$ with ``bounded geometry'' consider two manifolds $T_1$ and $T_2$ everywhere transverse to the collection $\mathcal S$.  Define the holonomy map from $T_1$ to $T_2$ by ``sliding along'' elements of $\mathcal S$.  Such holonomy maps 
were shown by Pesin to be absolutely continuous in the deterministic volume preserving setting \cite{MR0458490}.  
For fiber-wise stable manifolds associated to skew products satisfying \eqref{eq:IC2}, such holonomy maps are also known to be absolutely continuous.  See \cite[(4.2)]{MR968818} for further details and references and  to proofs.

The above absolute continuity property implies that  if $\munaught$ is SRB and if $A\subset\Sigma\times M$ is any set with $\mu(A)>0$ then for $\nu^\Z$-\ae $\xi$
$$\bigcup_{(\xi,x) \in A\cap M_\xi}\stabM x \xi \subset M_\xi$$
has positive Lebesgue measure.   
It follows that for the ergodic basis $B$, $$(\nunaught^\Z\times m)( B)>0.$$
We note that if $\eta\in \Sigmalocs(\xi)$ then $$\hol_{\eta,\xi}(B\cap M_\eta) = B\cap M_\xi$$
since $\cocycle[\xi][n]= \cocycle[\eta][n] $ for $n\ge 0$.  
Define $\hat B$ to be the ergodic basin of $\nu^\N\times \munaught$ for the skew product $\hat F\colon \Sigma_+\times M$; that is $(\omega, x)\in \hat B$ if 
$$ \lim_{n\to \infty}\tfrac 1 N \sum _{n = 0}^{N-1} \phi(\cocycle  [\omega] [n](x))= \int \phi \ d \munaught$$
 for all $\phi\colon M \to \R$ continuous. 
 We have that $\hat B$ is the image of $B$ under the natural projection $\Sigma\times M \to \Sigma _+ \times M$ whence 
 $ {(\nunaught^\N\times m)(\hat  B)} >0$.

Define a measure $$\hat m= \tfrac 1 {(\nunaught^\N\times m)(\hat  B)} \restrict {(\nunaught^\N\times m)}{ \hat B}$$ on $\Sigma_+\times M$.  Since both the set $\hat  B$ and the measure $\nunaught^\N\times m$ are $\hat F$-invariant (recall that $m$ is $\nunaught$-\as invariant) the measure $\hat m$ is $\hat F$-invariant.  
  Furthermore, for $\hat m$-\ae $(\omega, x)$ and any continuous $\phi\colon M \to \R,$ the Birkhoff sums satisfy $$\lim_{n\to \infty}\frac 1 N \sum _{n = 0}^{N-1} \phi(\cocycle  [\omega] [n] (x)) = \int \phi \ d \munaught$$ which  implies that $\hat m$ is ergodic for $F$ and, in particular, is an ergodic component of $\nunaught^\Z\times m$.  This implies (see e.g.\ \cite[Proposition I.2.1]{MR884892}) that $\hat m $ is of the form $\hat m= \nunaught^\Z\times m_0$ for $m_0$ an ergodic component of $m$ for $\MP^+(M, \nu)$.  

Then, for any continuous function $\phi\colon M\to \R$,  $\nunaught^\N$-\ae $\omega\in \Sigma_+$, and $m_0$-a.e. $x\in M$, we have 
$$\lim_{n\to \infty}\frac 1 N \sum _{n = 0}^{N-1} \phi(\cocycle [\omega]  [n] (x) )= \int \phi \ d \munaught.$$
Furthermore, since  $\nunaught \times m_0$ is invariant and ergodic  for $\hat F$, for $\nunaught^\N$-\ae $\omega\in \Sigma_+$  and $m_0$-a.e. $x\in M$   we also have that 
$$\lim_{n\to \infty}\frac 1 N \sum _{n = 0}^{N-1} \phi(\cocycle [\omega]  [n] (x)) = \int \phi \ d m_0.$$
In particular, $ \int \phi \ d \munaught =  \int \phi \ d m_0$ for all $\phi\colon M\to \R$, whence $\munaught= m_0$.  
\end{proof}

\bibliographystyle{../../AWBmath}

\bibliography{../../bibliography}

\end{document}